\newtheorem{thm}{Theorem}[section]
\newtheorem{lemma}[thm]{Lemma}
\newtheorem{claim}[thm]{Claim}
\newtheorem{cor}[thm]{Corollary}
\newtheorem{prop}[thm]{Proposition}
\newtheorem{conj}[thm]{Conjecture}
\newtheorem{Definition}[thm]{Definition}
\newenvironment{definition}
  {\begin{Definition}\rm}{\end{Definition}}
\newtheorem{Example}[thm]{Example}
\newenvironment{example}
  {\begin{Example}\rm}{\end{Example}}
\newtheorem{Remark}[thm]{Remark}
\newenvironment{remark}
  {\begin{Remark}\rm}{\end{Remark}}
\title{Interlacing networks: birational RSK, the octahedron recurrence, and Schur function identities.}
\author{Miriam Farber\footnote{\emph{Email address}: \href{mailto:mfarber@mit.edu}{mfarber@mit.edu}. This author is supported in part by the NSF Graduate Research Fellowship grant 1122374.}}
\author{Sam Hopkins\footnote{\emph{Email address}: \href{mailto:shopkins@mit.edu}{shopkins@mit.edu}.}}
\author{Wuttisak Trongsiriwat\footnote{\emph{Email address}: \href{mailto:wuttisak@mit.edu}{wuttisak@mit.edu}.}}
\address{Massachusetts Institute of Technology, Cambridge MA, 02139}
\begin{document}

\begin{abstract}
Motivated by the problem of giving a bijective proof of the fact that the birational RSK correspondence satisfies the octahedron recurrence, we define interlacing networks, which are certain planar directed networks with a rigid structure of sources and sinks. We describe an involution that swaps paths in these networks and leads to Pl\"{u}cker-like three-term relations among path weights. We show that indeed these relations follow from the Pl\"{u}cker relations in the Grassmannian together with some simple rank properties of the matrices corresponding to our interlacing networks. The space of matrices obeying these rank properties forms the closure of a cell in the matroid stratification of the totally nonnegative Grassmannian. Not only does the octahedron recurrence for RSK follow immediately from the three-term relations for interlacing networks, but also these relations imply some interesting identities of Schur functions reminiscent of those obtained by Fulmek and Kleber. These Schur function identities lead to some results on Schur positivity for expressions of the form~$s_{\nu}s_{\rho} - s_{\lambda}s_{\mu}$.
\end{abstract}

\maketitle

\section{Introduction}

The Robinson-Schensted (RS) correspondence is a bijection between permutations $\sigma$ in the symmetric group $S_n$ and pairs $(P,Q)$ of standard Young tableaux of the same shape $\lambda \vdash n$. Under this correspondence, the first part~$\lambda_1$ of $\lambda$ has a simple interpretation as the size of the longest increasing subsequence of $\sigma$; more generally, the partial sum $\lambda_1 + \lambda_2 + \cdots + \lambda_k$ is equal to the maximum size of a union of $k$ disjoint increasing subsequences in $\sigma$. This description of the shape under RS of a permutation  in terms of longest increasing subsequences is known as Greene's theorem~\cite{greene}. The Robinson-Schensted-Knuth (RSK) correspondence is a generalization of the RS correspondence which takes arbitrary $n\times n$ $\mathbb{N}$-matrices $A$ to pairs $(P,Q)$ of semistandard Young tableaux of the same shape $\lambda$. In the RSK correspondence, the first part $\lambda_1$ has an analogous interpretation as the maximum weight of a path in $A$ from~$(1,1)$ to~$(n,n)$. Here the weight of a path is just the sum of the entries of the boxes it visits.  Similarly, the partial sum $\lambda_1 + \lambda_2 + \cdots + \lambda_k$ is equal to the maximum weight over $k$-tuples of noncrossing paths in $A$ connecting $(1,1),(1,2),\ldots,(1,k)$ to~$(n,n-k+1),(n,n-k+2),\ldots,(n,n)$. This extension of Greene's theorem to RSK is apparently folklore; the best reference we have for it is~\cite[Theorem~12]{krattenthaler}. For general background on RSK and its importance in the theory of symmetric functions, see~\cite[Chapter~7]{stanley2}.

Recently there has been significant interest in a birational lifting of RSK which takes matrices with entries in $\mathbb{R}_{> 0}$ to certain three-dimensional arrays with entries in $\mathbb{R}_{> 0}$; see, e.g.,~\cite{kirillov2},~\cite{noumi},~\cite{danilov1},~\cite{danilov3},~\cite{corwin},~\cite{oconnell1},~\cite{oconnell2}. The noncrossing paths interpretation of RSK has a direct analog in the birational setting (where in this process of detropicalization, maximums becomes sums and sums become products). Also it turns out that, subject to the proper renormalization, the output array of this birational map obeys the octahedron recurrence~\cite{danilov2},~\cite{danilov3}. For an excellent introduction to the octahedron recurrence and its appearance in various combinatorial problems, see~\cite{speyer}. This paper's main motivation is to provide a combinatorial proof of the fact that the sums over weighted tuples of noncrossing paths of the form encountered in the birational RSK correspondence obey the octahedron recurrence. Although this has been established already in~\cite{danilov3} by algebraic means, we present a direct, bijective proof, akin to the standard proof of the famous Lindstr\"{o}m--Gessel--Viennot lemma (see~\cite[Theorem~2.7.1]{stanley1}).

To this end, in \S\ref{sec:networks} we define ``interlacing networks'', which are certain planar directed networks with a rigid structure of sources and sinks. In~\S\ref{sec:sinkswap} we state and prove our main result concerning these networks: the existence of an involution that swaps pairs of tuples of noncrossing paths connecting sinks and sources. This involution leads to several three-term relations between the weights of these pairs of tuples of noncrossing paths that are akin to the three-term Pl\"{u}cker relations. Because of the well-known correspondence between totally-positive matrices and planar directed networks, these three-term relations are equivalent to determinantal identities for matrices of a particular form. We study these matrices in~\S\ref{sec:matrices}, where we give an alternative algebraic proof of these determinantal identities via the Pl\"{u}cker relations. Indeed, the identities follow from a simple rank property of these matrices together with the Pl\"{u}cker relations. The space of matrices obeying this rank property forms the closure of a cell in the matroid stratification of the positive Grassmannian. Thus we connect interlacing networks to the combinatorial theory of total positivity initiated by Postnikov~\cite{postnikov}.

In \S\ref{sec:octa} we return to our original motivation and show how the octahedron recurrence for birational RSK follows immediately from the three-term relations. In \S\ref{sec:schur} we give a rather different application of these three-term relations: namely, we show that they imply some interesting Schur function identities. These identities are reminiscent of those obtained by Fulmek and Kleber~\cite{fulmek} (and earlier by Kirillov~\cite{kirillov1}). Fulmek and Kleber also give a bijective proof of their identities by an explicit procedure that swaps pairs of tuples of noncrossing paths, though their swapping procedure and their identities differ in significant ways from our own. These kind of Schur function identities are closely related to questions of Schur positivity studied, for instance, in~\cite{fomin},~\cite{bergeron},~\cite{lam},~\cite{mcnamara}. In particular, we explain how these Schur function identities prove some special cases of a conjecture communicated to us by Alex Postnikov about general conditions under which symmetric functions of the form $s_{\nu}s_{\rho} - s_{\lambda}s_{\mu}$ are Schur positive. We go on to demonstrate some further Schur function identities obtained from the same involution defined in~\S\ref{sec:networks} by observing that this involution turns out to apply to a more general class of networks than the interlacing networks we were originally interested in studying. These additional Schur function identities are no longer three-term, but still lead to additional results about Schur positivity.

In short, the theory of interlacing networks lies at the intersection of many topics in modern algebraic combinatorics: RSK, the octahedron recurrence, the Lindstr\"{o}m--Gessel--Viennot method, total positivity, Schur functions, and Schur positivity.

\noindent {\bf Acknowledgements}: This paper developed out of some discussion about birational RSK during the weekly combinatorics preseminar held at MIT. We thank all those who participated in this preseminar and we especially thank the organizer Alex Postnikov. We also thank Darij Grinberg for his careful reading and for pointing out the paper~\cite{fulmek} to us. Finally, we thank the anonymous referees for many helpful comments and help with the references.

\section{Interlacing networks: definitions and notation} \label{sec:networks}

For two integers $m, n \in \mathbb{Z}$ we set~$[m,n] := \{m,m+1,\ldots,n\}$ (which is empty if $m > n$) and $[n] := [1,n]$. For~$X \subseteq \mathbb{Z}$ we set $X_{\mathrm{even}} := X \cap 2\mathbb{Z}$. For a finite set~$X$ and~$k \in \mathbb{N}$, we use $\binom{X}{k}$ to denote the set of subsets of~$X$ of cardinality $k$, and use $2^X := \cup_{k=0}^{\infty} \binom{X}{k}$ to denote the powerset of~$X$. For a tuple~$T = (t_1,\ldots,t_k)$ we sometimes write~$t \in T$ as though $T$ were a set to mean $t = t_i$ for some $i \in [k]$. Finally, if $A$ is a $m \times n$ matrix and $U \subseteq [m]$ and $W \subseteq [n]$, we denote by $A[U|W]$ the submatrix of~$A$ with rows indexed by $U$ and columns indexed by $W$.

For our purposes, a \emph{graph}  $G = (V,E,\omega)$ is a finite, directed, acyclic, planar, edge-weighted graph with vertex set~$V$, edge set~$E \subseteq V \times V$, and edge-weight function~$\omega \colon E \to \mathbb{R}_{>0}$. Let $G$ be a graph. A \emph{path} in~$G$ is a sequence~$\pi = \{v_i\}_{i=0}^{n}$ of distinct elements of~$V$ with~$(v_{i-1},v_i) \in E$ for all~$i \in [n]$. We say that such a path~$\pi$ \emph{connects}~$v_0$ and $v_n$, and that~$v_0$ is the \emph{start point} of $\pi$ and~$v_n$ is its \emph{end point}. We use $\mathrm{Vert}(\pi)$ to denote the set of vertices in $\pi$. The \emph{weight} of~$\pi$ is~$\mathrm{wt}(\pi) := \prod_{i=1}^{n} \omega(v_{i-1},v_i)$. A \emph{subpath} of $\pi$ is a subsequence of consecutive vertices. Sometimes we view paths as simple curves embedded in the plane in the obvious way. Two paths~$\pi$ and $\sigma$ are \emph{noncrossing} if~$\mathrm{Vert}(\pi) \cap \mathrm{Vert}(\sigma) = \emptyset$. Let~$\Pi =(\pi_1,\ldots,\pi_n)$ be a tuple of paths. We say~$\Pi$ is \emph{noncrossing} if $\pi_i$ and $\pi_j$ are noncrossing for all~$1 \leq i\neq j \leq n$. Suppose that~$\mathcal{X} = \{x_i\}_{i=1}^{n}$ and~$\mathcal{Y} = \{y_i\}_{i=1}^{n}$ are two sequences of vertices in~$V$ of the same length $n$. Then we denote the set of all $n$-tuples of noncrossing paths connecting $\mathcal{X}$ and $\mathcal{Y}$ by
$\mathrm{NCPath}_G(\mathcal{X},\mathcal{Y})$.
We omit the subscript $G$ when the network is clear from context. We define the \emph{weight} of the tuple $\Pi$ to be $\mathrm{wt}(\Pi) := \prod_{i=1}^{n} \mathrm{wt}(\pi)$. Recall that we are most interested in pairs of tuples of noncrossing paths, so for a pair~$(\Pi,\Sigma)$ of tuples of paths we define $\mathrm{wt}(\Pi,\Sigma) := \mathrm{wt}(\Pi)\cdot\mathrm{wt}(\Sigma)$.

Let $k \geq 2$ be some fixed constant. A \emph{network} is a triple $(G,S,T)$, where
\begin{itemize}
\item $G = (V,E,\omega)$ is a graph (in the sense above);
\item $S = (s_1,\ldots,s_{2k-1}) \in V^{2k-1}$ is a tuple of \emph{source} vertices;
\item $T = (t_1,\ldots,t_{2k-1}) \in V^{2k-1}$ is a tuple of \emph{sink} vertices,
\end{itemize}
such that $G$ is embedded inside a planar disc with~$s_1,\ldots,s_{2k-1}$, $t_{2k-1},\ldots,t_1$ arranged in clockwise order on the boundary of this disc. Note that both $S$ and $T$ are allowed to have repeated vertices. Needless to say, such networks are considered up to homeomorphism. We assume the edges of $G$ intersect the boundary of the disc into which $G$ is embedded only at vertices. In this section and the next we will work with a fixed network $(G,S,T)$; we will refer to this network from now on as simply $G$ with the sources and sinks implicit. Note in particular that the parameter~$k$ is therefore fixed in this section and the next. A \emph{pattern} on~$G$ is just a pair~$(I,J)$ with~$I,J \in \binom{[2k-1]}{k-1}$, where we think of $s_i$ and $t_j$ being colored red for all~$i\in I$ and~$j \in J$, and the other source and sink vertices being colored blue. We call $I$ the \emph{source pattern} of~$(I,J)$, and $J$ its \emph{sink pattern}. We will use $\mathrm{Pat}(G)$ to denote the set of patterns on $G$.

Let $I,J \in \binom{[2k-1]}{m}$ where the elements of~$I$ are~$i_1 < \cdots < i_{m}$ and the elements of $J$ are~$j_1 < \cdots < j_{m}$. Define the set of \emph{tuples of noncrossing paths of type $(I,J)$} to be $\mathrm{NCPath}_G(I,J) :=  \mathrm{NCPath}_G(\{s_{i_l}\}_{l=1}^{m}, \{t_{j_l}\}_{l=1}^{m})$. Now fix a pattern~$(I,J) \in \mathrm{Pat}(G)$. Define the set of \emph{pairs of tuples of noncrossing paths of type $(I,J)$} to be $\mathrm{PNCPath}_G(I,J) :=  \mathrm{NCPath}_G(I,J) \times  \mathrm{NCPath}_G(\overline{I},\overline{J})$, where for a subset $K \subseteq [2k-1]$ we set $\overline{K} := [2k-1] \setminus K$. Again, we omit the subscripts of $\mathrm{NCPath}_G(I,J)$ and $\mathrm{PNCPath}_G(I,J)$ when the network is clear from context. We then define the \emph{weight} of a pattern $(I,J) \in \mathrm{Pat}(G)$ to be~$\mathrm{wt}(I,J) := \sum_{(R,B) \in \mathrm{PNCPath}(I,J)} \mathrm{wt}(R,B)$. Denote the set of all pairs of tuples of noncrossing paths of $G$ by~$\mathrm{PNCPath}(G) := \bigcup_{(I,J) \in \mathrm{Pat}(G)}  \mathrm{PNCPath}(I,J)$.

We now define what it means for a network to be interlacing, the key property that will allow us to find three-term relations among the pattern weights. This condition may at first appear ad-hoc, but the later algebraic treatment of these networks will show that this definition suffices for the corresponding matrix to have a certain easily-stated rank property. Let us call~$U \subseteq V$ \emph{non-returning} if for all $u_1,u_2 \in U$ and paths~$\pi$ connecting $u_1$ and $u_2$, we have~$\mathrm{Vert}(\pi)\subseteq U$ (this is a technical condition required for our sink-swapping algorithm to work). Then we say $G$ is \emph{$k$-bottlenecked} if there exists a non-returning subset $N \subseteq V$ with~$|N| \leq k$ so that for all~$i,j \in [2k-1]$ and paths~$\pi$ connecting~$s_i$ to~$t_j$, there is~$v \in \mathrm{Vert}(\pi)$ for some~$v \in N$. Let us call~$U \subseteq V$ \emph{sink-branching} if for all~$u \in U$, $i \in [2,2k-2]$, $j \in \{1,2k-1\}$ and paths $\pi'$ connecting~$u$ and~$t_i$ and~$\pi''$ connecting~$u$ and~$t_j$, we have that~$\mathrm{Vert}(\pi') \cap \mathrm{Vert}(\pi'') = \{u\}$ (this is another technical condition). Then we say~$G$ is \emph{$(k-1)$-sink-bottlenecked} if there exists a non-returning and sink-branching~$N_T \subseteq V$ with $|N_T| \leq k-1$ so that for all~$i \in [2k-1], j \in [2,2k-2]$ and paths $\pi$ connecting~$s_i$ to~$t_j$, there is~$v \in \mathrm{Vert}(\pi)$ for some $v \in N_T$.  We say~$G$ is \emph{interlacing} if it is both $k$-bottlenecked and $(k-1)$-sink-bottlenecked.

\begin{example}
The following interlacing network, the rectangular grid $\Gamma_{m,n}^{k}$, will serve as our running example and will also be key for the motivating problem concerning birational RSK. Let $m, n \geq 3$ and $2 \leq k < \mathrm{min}(m,n)$. The graph~$\Gamma_{m,n}$ has vertex set $V := \{(i,j) \in \mathbb{Z}^2 \colon i \in [m], j \in [n]\}$ and edge set~$E := E_1 \cup E_2$ where
\begin{align*}
E_1 &:= \{ ( (i,j),(i+1,j) )\colon i \in [m-1], j \in [n] \}; \\
E_2 &:= \{ ( (i,j),(i,j+1) )\colon i \in [m], j \in [n-1] \}.
\end{align*}
We allow the weight function $\omega$ of the graph $\Gamma_{m,n}$ to be arbitrary. The network~$\Gamma_{m,n}^{k}$ has underlying graph $\Gamma_{m,n}$ with sources and sinks
\begin{align*}
S &:= ((k,1),(k-1,1),(k-1,2),(k-2,2),\ldots,(1,k-1),(1,k)) \\
T &:= ((m,n-k),(m-1,n-k),\cdots,(m-k,n-1),(m-k,n)).
\end{align*}
Our term ``interlacing network'' derives from the fact that these sinks and sources are arranged in a zig-zag.  Strictly speaking, in order to satisfy the network condition requiring our graph to lie inside a disc with the source and sink vertices on the boundary, we should restrict the vertex set of~$\Gamma_{m,n}^{k}$ to~$V' \subseteq V$ where $V' := \{v \in V\colon s_i \leq v \leq t_j \textrm{ for some $i,j \in [2k-1]$}\}$. However, vertices in $V \setminus V'$ will never be used in a path connecting a source to a sink, so this technicality will not concern us from now on.

Observe that $\Gamma_{m,n}^{k}$ is interlacing: we may take $N = \{s_1,s_3,\ldots,s_{2k-1}\}$ to satisfy the $k$-bottlenecked condition, and $N_T = \{t_2,t_4,\ldots,t_{2k-2}\}$ to satisfy the $(k-1)$-sink-bottlenecked condition. Figure~\ref{fig:intnetworkex} depicts $\Gamma_{8,10}^{4}$ along with an element of $\mathrm{PNCPath}(\{2,4,6\},\{2,4,6\})$. Note that vertex~$(1,1)$ is the top-leftmost vertex in this picture, $(8,1)$ is the bottom-leftmost vertex, and $(1,10)$ is the top-rightmost: we use ``matrix coordinates''' with edges directed downwards and rightwards in~$\Gamma_{m,n}^{k}$. We warn the reader that there are various conventions for orientation of such a grid, and we will at different times use several of them.
\end{example}

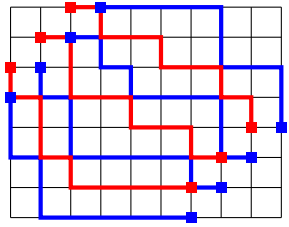
\begin{figure}
\begin{center}
\begin{tikzpicture}[scale=0.4,rotate = 270]
\foreach \x in {1,...,10}{
	\draw[thin] (1,\x) -- (8,\x);
}
\foreach \y in {1,...,8}{
	\draw[thin] (\y,1) -- (\y,10);
}
\draw[color=blue,ultra thick] (4,1) -- (6,1) -- (6,2) -- (8,2) -- (8,7);
\draw[color=blue,ultra thick] (3,2) -- (4,2) -- (4,3) -- (6,3) -- (6,7) -- (7,7) -- (7,8);
\draw[color=blue,ultra thick] (2,3) -- (2,4) -- (3,4) -- (3,5) -- (4,5) -- (4,8) -- (6,8) -- (6,9);
\draw[color=blue,ultra thick] (1,4) -- (1,8) -- (3,8) -- (3,10) -- (5,10);
\draw[color=red,ultra thick] (3,1) -- (4,1) -- (4,2) -- (6,2) -- (6,3) -- (7,3) -- (7,7);
\draw[color=red,ultra thick] (2,2) -- (2,3) -- (4,3) -- (4,5) -- (5,5) -- (5,7) -- (6,7) -- (6,8);
\draw[color=red,ultra thick] (1,3) -- (1,4) -- (2,4) -- (2,6) -- (3,6) -- (3,8) -- (4,8) -- (4,9) -- (5,9);
\foreach \x in {1,...,4} {
	 \node[rectangle,fill=blue,inner sep=2pt] at (5-\x,\x) {};
	 \node[rectangle,fill=blue,inner sep=2pt] at (9-\x,\x+6) {};
}
\foreach \x in {1,...,3} {
	 \node[rectangle,fill=red,inner sep=2pt] at (4-\x,\x) {};
	 \node[rectangle,fill=red,inner sep=2pt] at (8-\x,\x+6) {};
}
\end{tikzpicture}
\end{center}
\caption{$\Gamma_{8,10}^{4}$ and an element of $\mathrm{PNCPath}(\{2,4,6\},\{2,4,6\})$.} \label{fig:intnetworkex}
\end{figure}

\section{The sink-swapping involution} \label{sec:sinkswap}

In this section we obtain three-term Pl\"{u}cker-like relations between pattern weights of an interlacing network $G$ via an algorithmically-defined involution on~$\mathrm{PNCPath}(G)$ that swaps sink patterns. It turns out that this same involution makes sense even assuming only that $G$ is $k$-bottlenecked and in this case also leads to relations among pattern weights. The relations obtained when~$G$ is $k$-bottlenecked are weaker than for interlacing $G$, but they are nevertheless interesting (and will have applications to Schur function identities in \S\ref{sec:schur}).

\begin{definition}
For $J, J' \in \binom{[2k-1]}{k-1}$, we say that $J'$ is a \emph{swap} of $J$ if $J \cap J' = \emptyset$. Clearly the relation of being a swap is symmetric. If $J$ and $J'$ are swaps of one another, there is a unique element $j^* \in [2k-1] \setminus (J \cup J')$ and we call~$j^*$ their \emph{pivot}. We say $J'$ is a \emph{balanced swap} of $J$ if it is a swap of $J$ and their pivot $j^*$ is such that $|J \cap [j^*]| = |J' \cap [j^*]|$. We say that that $J'$ is a \emph{end swap} of $J$ if it is a swap of $J$ and their pivot $j^*$ is either $1$ or $2k-1$. Observe that~$J'$ being an end swap of $J$ implies it is a balanced swap of $J$. Define $\mathrm{bswap}(J)$ (resp. $\mathrm{eswap}(J)$) to be the set of balanced swaps (resp. end swaps) of $J$.
\end{definition}

Our goal in this section is to prove the following theorem and corollaries:

\begin{thm} \label{thm:tswap}
Suppose $G$ is $k$-bottlenecked. Then there is a weight-preserving involution~$\tau\colon \mathrm{PNCPath}(G) \to \mathrm{PNCPath}(G)$ with
\[\tau(\mathrm{PNCPath}(I,J)) \subseteq \bigcup_{J' \in \mathrm{bswap}(J)} \mathrm{PNCPath}(I,J'),\]
for all $(I,J) \in \mathrm{Pat}(G)$.

Suppose further that $G$ is interlacing. Then for all $(I,J) \in \mathrm{Pat}(G)$ we have
\[ \tau(\mathrm{PNCPath}(I,J)) \subseteq \bigcup_{J' \in \mathrm{eswap}(J)}\mathrm{PNCPath}(I,J'). \]
\end{thm}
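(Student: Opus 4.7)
The strategy is to construct $\tau$ by swapping suffixes of red and blue paths at canonical bottleneck vertices. Fix $(\Pi,\Sigma) \in \mathrm{PNCPath}(I,J)$ with $|\Pi| = k-1$ red paths and $|\Sigma| = k$ blue paths, and recall that the non-returning property of $N$ forces the $N$-vertices on any path to form a consecutive subpath, so each path has a canonical ``first'' vertex in $N$. These canonical vertices are distinct within each color class, so the red paths contribute $k-1$ distinct canonical vertices of $N$ and the blue paths contribute $k$. Since $|N| \leq k$ and the blue canonicals are $k$ distinct elements of $N$, we must have $|N| = k$ with every vertex of $N$ being a blue canonical; the $k-1$ red canonicals then hit $k-1$ of these $k$ vertices, leaving exactly one blue path whose canonical $N$-vertex has no red companion. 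This distinguished blue path will be the ``pivot''.

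Define $\tau$ by pairing each red path with the unique blue path sharing its canonical $N$-vertex and swapping their suffixes there; the pivot blue path is left unchanged, and the index of its sink will be the pivot $j^*$ of the resulting sink-pattern swap.

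Four properties then need verification. Weight preservation is immediate, since the swap merely reshuffles which edges belong to which path without altering the multiset of used edges. The involution property follows from the fact that the canonical red/blue pairings at shared $N$-vertices are preserved under $\tau$, so reapplying $\tau$ reverses the suffix exchanges. That $J'$ is a balanced swap of $J$ follows from the planar order-preserving matching of sources to sinks among noncrossing paths, together with the observation that the pivot's position in the planar order equalizes $|J \cap [j^*]|$ and $|J' \cap [j^*]|$. The delicate step is checking that each new color class is still noncrossing; here I would use planarity together with the non-returning property (paths cannot re-enter $N$ after leaving) to argue that two newly formed paths of the same color cannot meet.

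For the interlacing case, apply the same pigeonhole at $N_T$. Only paths to sinks in $[2, 2k-2]$ are forced through $N_T$, giving at least $2k-3$ such paths while $|N_T| \leq k-1$; a parallel count shows that the pivot sink must lie outside $[2, 2k-2]$, so $j^* \in \{1, 2k-1\}$. The sink-branching property is what ensures that paths to $t_1$ and $t_{2k-1}$ interact with $N_T$ in a way compatible with this count. The main obstacle, I expect, will be the noncrossing verification in the general swap step and, in the interlacing case, ensuring that the $N_T$-pigeonhole really pins the pivot to an endpoint rather than merely to a balanced position.
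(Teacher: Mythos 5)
Your strategy is genuinely different from the paper's, and unfortunately it contains a gap that is not just a missing verification but an actual failure of the construction: flipping at the canonical ``first $N$-vertex'' of each path does not in general produce noncrossing tuples. The problem is that a red path may pass through more than one vertex of $N$ (the non-returning property only forces the $N$-vertices on a path to be \emph{consecutive}, not unique), and in particular it may pass through the pivot's $N$-vertex \emph{after} its own canonical one. Concretely, take $k=2$, sources $s_1,s_2,s_3$ and sinks $t_1,t_2,t_3$, bottleneck $N=\{v_1,v_2\}$ with an edge $v_1\to v_2$, red path $r_1=(s_2,v_1,v_2,t_2)$, and blue paths $b_1=(s_1,v_1,t_1)$, $b_2=(s_3,v_2,t_3)$. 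Here the canonical first $N$-vertex of $r_1$ and of $b_1$ is $v_1$, so your rule pairs them and flips at $v_1$, producing the new blue path $b_1'=(s_1,v_1,v_2,t_2)$ while leaving the pivot $b_2$ untouched. But $b_1'$ and $b_2$ now share $v_2$, so the blue tuple is no longer noncrossing. The paper's algorithm avoids exactly this: it flips not at $N$ but at the minimum $(k-1)$-antichain of the full intersection poset $\mathrm{Int}^{\Pi}$ (here $\{v_2\}$, not $\{v_1\}$), and then runs an iterative ``repair'' loop that pushes the flip set downward whenever the pivot's blue tail re-enters the downset of the current antichain. This loop is precisely what handles the scenario above, and it is the heart of Claims~\ref{claim:algwelldef}--\ref{claim:noncrossing}; it cannot be replaced by a single static choice of $N$-vertices.

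Beyond this central gap, your treatment of the balanced-swap and end-swap conclusions is also far from a proof. The claim that $J'$ is a balanced swap of $J$ amounts to a precise count $|J\cap[j^*]|=|J'\cap[j^*]|$, and in the paper this is Claim~\ref{claim:balancedswap}, proved by tracking the regions $X_{n_i}$ in which the ``changing tail'' $\widetilde{b}_i$ lives -- a nontrivial planar argument, not an immediate consequence of order-preservation. Likewise, the assertion that in the interlacing case the pivot lands at an endpoint is Claim~\ref{claim:endswap}, which requires the minimum $(k-2)$-antichain in $\mathrm{Int}^{\Pi_T}$ to sit in the downset of $U$ (using the sink-branching condition), together with the $X_{n_i}$-analysis; a ``parallel count'' at $N_T$ does not suffice. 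In short, the pigeonhole observation about $N$ is a good starting point (and is indeed used in the paper to produce the initial $(k-1)$-antichain), but the involution cannot be defined by a one-shot suffix swap at bottleneck vertices; it needs the minimum-antichain choice and the iterative repair loop.
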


\begin{cor}\label{cor:coroftswap1}
Suppose $G$ is $k$-bottlenecked. Fix a source pattern $I \in \binom{[2k-1]}{k-1}$. Fix some $K \subseteq [2k-1]_{\mathrm{even}}$ and set $K' :=[2k-1]_{\mathrm{even}} \setminus K$. Then
\[ \tau \bigg( \bigcup_{\substack{(I,J) \in \mathrm{Pat}(G) \\ J_{\mathrm{even}} = K}} \mathrm{PNCPath}(I,J) \bigg) = \bigcup_{\substack{(I,J') \in \mathrm{Pat}(G) \\ J'_{\mathrm{even}}= K'}} \mathrm{PNCPath}(I,J')\]
and thus
\[ \sum_{\substack{(I,J) \in \mathrm{Pat}(G), J_{\mathrm{even}} = K}} \mathrm{wt}(I,J) = \sum_{\substack{(I,J') \in \mathrm{Pat}(G), J'_{\mathrm{even}} = K'}} \mathrm{wt}(I,J').\]
\end{cor}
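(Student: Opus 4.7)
The plan is to deduce this directly from Theorem~\ref{thm:tswap} by analyzing what the balanced-swap condition implies about parities. The key observation is that if $J' \in \mathrm{bswap}(J)$ with pivot $j^*$, then $j^*$ must be odd. Indeed, since $j^* \notin J \cup J'$ and $J \cap J' = \emptyset$, we have $|J \cap [j^*]| + |J' \cap [j^*]| = |(J \cup J') \cap [j^*]| = j^* - 1$, so the balanced condition $|J \cap [j^*]| = |J' \cap [j^*]|$ forces $j^* - 1$ to be even. Thus $j^* \in [2k-1]_{\mathrm{odd}}$, and since $J$ and $J'$ partition $[2k-1] \setminus \{j^*\}$ into two $(k-1)$-subsets, their even parts partition $[2k-1]_{\mathrm{even}}$ exactly. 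So every balanced swap $J'$ of $J$ satisfies $J'_{\mathrm{even}} = [2k-1]_{\mathrm{even}} \setminus J_{\mathrm{even}}$.

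With this in hand, fix $I$ and $K \subseteq [2k-1]_{\mathrm{even}}$ and set $K' := [2k-1]_{\mathrm{even}} \setminus K$. By Theorem~\ref{thm:tswap}, for every pattern $(I,J)$ with $J_{\mathrm{even}} = K$ we have
\[ \tau(\mathrm{PNCPath}(I,J)) \subseteq \bigcup_{J' \in \mathrm{bswap}(J)} \mathrm{PNCPath}(I,J'), \]
and by the preceding paragraph every such $J'$ satisfies $J'_{\mathrm{even}} = K'$. Taking the union over all such $J$ yields
\[ \tau\Bigl(\bigcup_{\substack{(I,J) \in \mathrm{Pat}(G) \\ J_{\mathrm{even}} = K}} \mathrm{PNCPath}(I,J)\Bigr) \subseteq \bigcup_{\substack{(I,J') \in \mathrm{Pat}(G) \\ J'_{\mathrm{even}} = K'}} \mathrm{PNCPath}(I,J'). \]
Applying the same reasoning with the roles of $K$ and $K'$ exchanged gives the reverse inclusion; since $\tau$ is an involution, the two inclusions are equivalent and we obtain the claimed set equality.

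The weight identity then follows immediately: since $\tau$ is weight-preserving and restricts to a bijection between the two unions displayed above, summing $\mathrm{wt}(R,B)$ over either side gives the same value. Writing this sum as a sum over patterns $(I,J)$ with the prescribed even part recovers the stated formula. There is no real obstacle; the only nontrivial content, beyond a direct invocation of Theorem~\ref{thm:tswap}, is the parity lemma about balanced swaps in the first paragraph, which is a purely combinatorial counting argument.
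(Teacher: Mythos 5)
Your proof is correct and follows the same route as the paper: observe that the pivot of a balanced swap must be odd, hence $J'_{\mathrm{even}} = [2k-1]_{\mathrm{even}} \setminus J_{\mathrm{even}}$, apply Theorem~\ref{thm:tswap} to get one inclusion, and obtain the reverse inclusion by swapping the roles of $K$ and $K'$. The only difference is that you spell out the counting argument (\,$|J\cap[j^*]| + |J'\cap[j^*]| = j^*-1$\,) for why the pivot is odd, whereas the paper states this fact without proof.
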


\begin{cor}\label{cor:coroftswap2}
Suppose $G$ is interlacing. Fix a source pattern $I \in \binom{[2k-1]}{k-1}$. Suppose that the sink pattern $J \in \binom{[2k-1]}{k-1}$ is such that $\{1,2k-1\} \cap J = \emptyset$. Define~$J' := [2,2k-1] \setminus J$ and $J'' := [1,2k-2] \setminus J$. Then
\[\tau(\mathrm{PNCPath}(I,J)) = \mathrm{PNCPath}(I,J') \cup \mathrm{PNCPath}(I,J'')\]
and thus
$\mathrm{wt}(I,J) = \mathrm{wt}(I,J') + \mathrm{wt}(I,J'').$
\end{cor}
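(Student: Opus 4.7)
The plan is to extract Corollary 3.3 directly from Theorem 3.1 by a short set-theoretic argument. The key combinatorial observation is that the hypothesis $\{1, 2k-1\} \cap J = \emptyset$ forces $\mathrm{eswap}(J)$ to consist of exactly two elements, namely $J'$ and $J''$, and moreover forces $\mathrm{eswap}(J') = \mathrm{eswap}(J'') = \{J\}$. Once this is established, the involutive nature of $\tau$ will upgrade the inclusion of Theorem 3.1 to an equality.

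First I would verify the combinatorial claim about end swaps. Observe that $J'$ and $J''$ are both of size $k-1$ (since $|[2,2k-1] \setminus J| = (2k-2) - (k-1) = k-1$, and similarly for $J''$) and both are disjoint from $J$, so they are indeed swaps of $J$. The pivot of $J'$ (the unique element of $[2k-1] \setminus (J \cup J')$) is $1$, and the pivot of $J''$ is $2k-1$, so both are end swaps. Conversely, any end swap of $J$ has pivot $1$ or $2k-1$ (both of which are available since $\{1,2k-1\} \cap J = \emptyset$) and the swap partner is then uniquely determined as $[2,2k-1] \setminus J$ or $[1,2k-2] \setminus J$. Hence $\mathrm{eswap}(J) = \{J', J''\}$. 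For the reverse direction, note that $2k-1 \in J'$ (since $2k-1 \notin J$), which rules out $2k-1$ as a pivot for any end swap of $J'$, so the only possible pivot is $1$; this forces the swap partner to be $[2,2k-1] \setminus J' = J$. Symmetrically, $1 \in J''$ forces $\mathrm{eswap}(J'') = \{J\}$.

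Next I would combine these facts with the second half of Theorem 3.1. Write $A := \mathrm{PNCPath}(I,J)$, $B := \mathrm{PNCPath}(I,J')$, $C := \mathrm{PNCPath}(I,J'')$. Theorem 3.1 gives $\tau(A) \subseteq B \cup C$, $\tau(B) \subseteq A$, and $\tau(C) \subseteq A$. Since $\tau$ is an involution,
\[ A = \tau(\tau(A)) \subseteq \tau(B \cup C) = \tau(B) \cup \tau(C) \subseteq A,\]
so equality holds throughout, and in particular $\tau(A) = B \cup C$. This proves the first displayed equation of the corollary.

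For the weight identity, I would use that $\tau$ is weight-preserving, together with the observation that $B$ and $C$ are disjoint (as their constituent path tuples have different sink patterns $J'$ and $J''$). Hence
\[\mathrm{wt}(I,J) = \sum_{(R,B) \in A} \mathrm{wt}(R,B) = \sum_{(R,B) \in \tau(A)} \mathrm{wt}(R,B) = \sum_{(R,B) \in B} \mathrm{wt}(R,B) + \sum_{(R,B) \in C} \mathrm{wt}(R,B) = \mathrm{wt}(I,J') + \mathrm{wt}(I,J'').\]
There is no real obstacle here beyond carefully enumerating the end swaps in the three cases; the involution-plus-end-swap count machinery does all of the work.
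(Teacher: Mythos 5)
Your argument is correct and follows essentially the same route as the paper: compute the end-swap sets $\mathrm{eswap}(J) = \{J', J''\}$, $\mathrm{eswap}(J') \subseteq \{J\}$, $\mathrm{eswap}(J'') \subseteq \{J\}$, apply Theorem~\ref{thm:tswap} to get the forward inclusion, and then use the involutive property of $\tau$ to get the reverse inclusion. You simply spell out the ``and also the reverse inclusion'' step of the paper in explicit set-theoretic form.
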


\begin{remark}
An anonymous referee pointed out to us the paper of Danilov, Karzanov, and Koshevoy~\cite{danilov5} in which the authors classify quadratic Pl\"{u}cker-like relations for functions generated by network flows using nonintersecting paths as in the above theorem and corollaries. Their approach is similar to the classification of inequalities between products of two matrix minors obtained by Skandera~\cite{skandera}. However, the formulae of~\cite{danilov5} seem somewhat complicated to apply for our purposes because they are satisfied by all planar networks (or equivalently all totally nonnegative matrices); whereas by restricting to networks (or equivalently matrices) of a special interlacing form we derive much simpler formulae that directly apply to our motivating problem of showing that birational RSK satisfies the octahedron recurrence.
\end{remark}

Before we can describe the bijection $\tau$ we need a technical result about posets. First we recall some poset terminology. Let $(P,\leq)$ be a finite poset. For~$x,y \in P$, we write~$x < y$ to denote~$x \leq y$ and $x \neq y$, as is standard.  At some point we will require the notion of a downset; for a subset $P' \subseteq P$ the \emph{downset} of~$P'$ is the set of all $x \in P$ with $x \leq y$ for some $y \in P'$. Recall that a \emph{chain} in~$P$ is a subset~$C \subseteq P$ such that any two elements of $C$ are related, and an \emph{antichain} in~$P$ is a subset~$A \subseteq P$ such that no two elements of~$A$ are related. If~$|A| = k$, then we say $A$ is a \emph{$k$-antichain}. Suppose that $m$ is the maximal size of an antichain in $P$. In this case, we can find a partition of $P$ into chains $C_1,\ldots,C_m$: that is, each~$C_i$ is a chain and we have~$\cup_{i=1}^{m} C_i = P$ and $C_i \cap C_j = \emptyset$ for~$i \neq j$. (This result is known as Dilworth's theorem; see for example Freese~\cite{freese}, who proves not only that the poset of maximal size antichains has a minimum, as we show below, but also that this poset is in fact a lattice.) Let~$\mathcal{A}_m(P)$ denote the set of $m$-antichains of~$P$. For any~$A \in \mathcal{A}_m(P)$ we must have that~$|A \cap C_i| = 1$ for all~$i \in [m]$. Thus we can define the following partial order on $\mathcal{A}_m(P)$: for two $m$-antichains~$X = \{x_i\}_{i=1}^{m}$ and $Y = \{y_i\}_{i=1}^{m}$ such that~$X \cap C_i = \{x_i\}$ and~$Y \cap C_i = \{y_i\}$ for all $i \in [m]$, we say that~$X \leq Y$ if and only if~$x_i \leq y_i$ for all~$i \in [m]$.

\begin{prop} \label{prop:antichain}
The poset $\mathcal{A}_m(P)$ has a minimum.
\end{prop}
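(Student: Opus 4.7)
The plan is to exhibit a meet operation on $\mathcal{A}_m(P)$: given two $m$-antichains, I will construct a third $m$-antichain that lies below both in the partial order. Since $\mathcal{A}_m(P)$ is finite, iterating this meet across all of $\mathcal{A}_m(P)$ yields a minimum.

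Fix the chain decomposition $C_1,\ldots,C_m$ from Dilworth's theorem and recall that every $X \in \mathcal{A}_m(P)$ meets each $C_i$ in exactly one element. For $X = \{x_i\}_{i=1}^m$ and $Y = \{y_i\}_{i=1}^m$ with $X \cap C_i = \{x_i\}$ and $Y \cap C_i = \{y_i\}$, I would define
\[
Z := \{z_i\}_{i=1}^m, \qquad z_i := \min(x_i,y_i),
\]
where the minimum makes sense because $x_i$ and $y_i$ both lie on the chain $C_i$ and are therefore comparable. By construction $Z \cap C_i = \{z_i\}$ and $z_i \leq x_i$, $z_i \leq y_i$ for every $i$, so once $Z$ is shown to be an $m$-antichain we immediately get $Z \leq X$ and $Z \leq Y$ in $\mathcal{A}_m(P)$.

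The main step, and the one where the real work lies, is verifying that $Z$ is an antichain. Suppose for contradiction that $z_i < z_j$ for some $i \neq j$. Splitting into four cases according to whether $z_i$ equals $x_i$ or $y_i$ and whether $z_j$ equals $x_j$ or $y_j$: if $z_i = x_i$ and $z_j = x_j$ then $x_i < x_j$ contradicts $X$ being an antichain, and symmetrically if $z_i = y_i$ and $z_j = y_j$; if $z_i = x_i$ and $z_j = y_j$ then $x_i < y_j \leq x_j$ (using $z_j = \min(x_j, y_j) = y_j$), giving $x_i < x_j$, again a contradiction; the remaining case is symmetric. Thus $Z$ is an antichain, and since $|Z| = m$ it lies in $\mathcal{A}_m(P)$.

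Finally, given any enumeration $X^{(1)}, X^{(2)}, \ldots, X^{(N)}$ of $\mathcal{A}_m(P)$ (finite because $P$ is finite), form $M := X^{(1)} \wedge X^{(2)} \wedge \cdots \wedge X^{(N)}$ by iterating the construction above; each intermediate meet remains an $m$-antichain by the argument just given, and $M \leq X^{(j)}$ for every $j$, so $M$ is the desired minimum of $\mathcal{A}_m(P)$. The only nontrivial part is the case analysis showing $Z$ is an antichain; once that is in hand, the rest is formal.
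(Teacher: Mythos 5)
Your proof is correct and takes essentially the same approach as the paper: define $z_i := \min(x_i,y_i)$ chainwise, verify that $Z$ is an antichain by checking that $z_i < z_j$ would force a relation in $X$ or $Y$ (you do this by a four-way case split; the paper compresses it slightly by noting $z_i < z_j$ implies $\min(x_i,y_i) < x_j$ and $\min(x_i,y_i) < y_j$ simultaneously), and then use finiteness to conclude a minimum exists.
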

\begin{proof} Given any $X, Y \in \mathcal{A}_m(P)$ which are incomparable, we claim that there is~$Z \in \mathcal{A}_m(P)$ so that $Z \leq X$ and $Z \leq Y$. Define~$z_i := \mathrm{min}(x_i,y_i)$ for all $i \in [m]$ and set $Z := \{z_1,\ldots,z_m\}$. Note that $Z$ is still an antichain: if $z_i < z_j$, then $\mathrm{min}(x_i,y_i) < \mathrm{min}(x_j,y_j)$, which means $\mathrm{min}(x_i,y_i) < x_j$ and~$\mathrm{min}(x_i,y_i) < y_j$, which forces a relation in $X$ or in $Y$. Because~$\mathcal{A}_m(P)$ is evidently finite,  and by definition nonempty, it has a minimum. \end{proof}

The order we defined on $\mathcal{A}_m(P)$ above in principle depended on the choice of chains $\{C_1,\ldots,C_m\}$; but in fact we can give a description of this order which does not depend on such a choice. Namely, for $X,Y \in \mathcal{A}_m(P)$, let us say~$X \leq' Y$ if for each $x \in X$ there exists $y \in Y$ such that $x \leq y$. Then~$X \leq' Y$ if and only if~$X \leq Y$. The implication $X \leq Y \Rightarrow X \leq' Y$ is trivial. To see~$X \leq' Y \Rightarrow X \leq Y$, write $X = \{x_i\}_{i=1}^{m}$ and $Y = \{y_i\}_{i=1}^{m}$ with~$X \cap C_i = \{x_i\}$ and $Y \cap C_i = \{y_i\}$ for all $i \in [m]$. Then for $i \in [m]$, we have that there is some~$y_j$ such that $x_i \leq y_j$. If $i = j$, then we are okay. So suppose $i \neq j$. It cannot be that $y_i \leq x_i$ as then $y_i \leq y_j$ and $Y$ would fail to be an antichain. But~$C_i$ is a chain, so this means $x_i < y_i$. Therefore we have~$X \leq Y$.

We now define the poset of intersections of a tuple of paths in $G$, which will be key in defining the bijection $\tau$ of Theorem~\ref{thm:tswap}. Let $\Pi = (\pi_1,\ldots,\pi_{m})$ be a tuple of paths in $G$. Then define $\mathrm{Int}^{\Pi} := \cup_{i \neq j} \mathrm{Vert}(\pi_i) \cap \mathrm{Vert}(\pi_j)$ to be the set of all intersections between paths in $\Pi$. First of all, we give~$\mathrm{Int}^{\Pi}$ a labeling function $\ell^{\Pi}\colon \mathrm{Int}^{\Pi} \to \mathcal{P}(\{\pi_1,\ldots,\pi_m\})$, whereby $\ell^{\Pi}(u) := \{\pi_i\colon u \in \mathrm{Vert}(\pi_i)\}.$ Secondly, we give $\mathrm{Int}^{\Pi}$ a partial order $\leq$ as follows. For a path $\pi_i  = \{v_j\}_{j=0}^{n}$, if~$v_i, v_j \in \mathrm{Int}^{\Pi}$ for $0 \leq i \leq j \leq n$ we declare~$v_j \preceq v_i$. We then define $\leq$ to be the transitive closure of~$\preceq$. It is routine to verify that $\leq$ indeed defines a partial order on~$\mathrm{Int}^{\Pi}$ (but note that here we use the acyclicity of $G$ in an essential way).

We need just a little more terminology related to paths in order to define~$\tau$. For a tuple $\Pi = (\pi_1,\ldots,\pi_n)$ of paths, let us say a vertex $v \in V$ is a \emph{$2$-crossing} of $\Pi$ if $|\{i\colon v \in \mathrm{Vert}(\pi_i)\}| = 2$. Let $v$ be a $2$-crossing of $\Pi$ and suppose that~$v \in \pi_i \cap \pi_j$ for $i \neq j$. Say $\pi_i = \{u_1,\ldots,u_a,v,u_{a+1},\ldots,u_b\}$ and~$\pi_j = \{w_1,\ldots,w_c,v,w_{c+1},\ldots,w_d\}$. Then define the \emph{flip of $\Pi$ at $v$} to be~$\mathrm{flip}_{v}(\Pi) := (\pi'_1,\ldots,\pi'_n)$ where
\[ \pi'_m := \begin{cases} \{u_1,\ldots,u_a,v,w_{c+1},\ldots,w_d\} &\textrm{ if $m = i$;} \\
 \{w_1,\ldots,w_c,v,u_{a+1},\ldots,u_b\} &\textrm{ if $m = j$;} \\
 \pi_m, &\textrm{otherwise.} \end{cases}\]
 For any two $2$-crossings $u,v$ of $\Pi$, we have $\mathrm{flip}_u  (\mathrm{flip}_v(\Pi)) = \mathrm{flip}_v(\mathrm{flip}_u(\Pi))$. Thus for a set $U = \{u_1,\ldots,u_p\} \subseteq V$ of $2$-crossings of $\Pi$, let us define the \emph{flip of~$\Pi$ at $U$} to be $\mathrm{flip}_U(\Pi) = \mathrm{flip}_{u_1}(\mathrm{flip}_{u_2}(\cdots \mathrm{flip}_{u_p}(\Pi) \cdots ) )$, where the composition may be taken in any order. Finally, for two tuples of paths~\mbox{$\Pi = (\pi_1,\ldots,\pi_m)$} and~\mbox{$\Sigma = (\sigma_1,\ldots,\sigma_n)$}, set~$\Pi + \Sigma := (\pi_1,\ldots,\pi_m,\sigma_1,\ldots,\sigma_n)$.

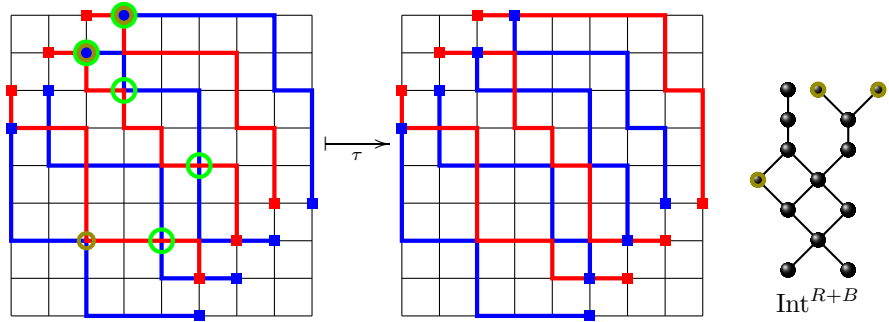
\begin{figure}
\begin{center}
\xymatrix {\begin{tikzpicture}[scale=0.5,rotate = 270]
\foreach \x in {1,...,9}{
	\draw[thin] (1,\x) -- (9,\x);
}
\foreach \y in {1,...,9}{
	\draw[thin] (\y,1) -- (\y,9);
}
\draw[color=blue,ultra thick] (4,1) -- (7,1) -- (7,3) -- (9,3) -- (9,6);
\draw[color=blue,ultra thick] (3,2) -- (5,2) -- (5,5) -- (8,5) -- (8,7);
\draw[color=blue,ultra thick] (2,3) -- (2,4) -- (3,4) -- (3,6)  -- (7,6) -- (7,8);
\draw[color=blue,ultra thick] (1,4) -- (1,8) -- (3,8) -- (3,9) -- (6,9);
\draw[color=red,ultra thick] (3,1) -- (4,1) -- (4,3) -- (7,3) -- (7,6) -- (8,6);
\draw[color=red,ultra thick] (2,2) -- (2,3) -- (3,3) -- (3,4) -- (4,4) -- (4,5) -- (5,5) -- (5,7) -- (7,7);
\draw[color=red,ultra thick] (1,3) -- (1,4) -- (2,4) -- (2,7) -- (4,7) -- (4,8) -- (6,8);
\foreach \x in {1,...,4} {
	 \node[rectangle,fill=blue,inner sep=2pt] at (5-\x,\x) {};
	 \node[rectangle,fill=blue,inner sep=2pt] at (10-\x,\x+5) {};
}
\foreach \x in {1,...,3} {
	 \node[rectangle,fill=red,inner sep=2pt] at (4-\x,\x) {};
	 \node[rectangle,fill=red,inner sep=2pt] at (9-\x,\x+5) {};
}
\draw[ultra thick,color=green]  (2,3) circle (0.3);
\draw[ultra thick,color=green]  (1,4) circle (0.3);
\draw[ultra thick,color=green]  (3,4) circle (0.3);
\draw[ultra thick,color=green]  (5,6) circle (0.3);
\draw[ultra thick,color=green]  (7,5) circle (0.3);
\draw[ultra thick,color=olive]  (2,3) circle (0.2);
\draw[ultra thick,color=olive]  (1,4) circle (0.2);
\draw[ultra thick,color=olive]  (7,3) circle (0.2);
\end{tikzpicture}   \ar@{|->}[r]<15ex>_-{\tau} & \begin{tikzpicture}[scale=0.5,rotate = 270]
\foreach \x in {1,...,9}{
	\draw[thin] (1,\x) -- (9,\x);
}
\foreach \y in {1,...,9}{
	\draw[thin] (\y,1) -- (\y,9);
}
\draw[color=blue,ultra thick] (4,1) -- (7,1) -- (7,3) -- (9,3) -- (9,6);
\draw[color=blue,ultra thick] (3,2) -- (5,2) -- (5,5) -- (7,5) -- (7,6) -- (8,6);
\draw[color=blue,ultra thick] (2,3) -- (3,3) -- (3,6) -- (5,6) -- (5,7) -- (7,7);
\draw[color=blue,ultra thick] (1,4) -- (2,4) -- (2,7) -- (4,7) -- (4,8) -- (6,8);
\draw[color=red,ultra thick] (3,1) -- (4,1) -- (4,3) -- (7,3) -- (7,5) -- (8,5) -- (8,7);
\draw[color=red,ultra thick] (2,2) -- (2,3) -- (2,4) -- (3,4) -- (4,4) -- (4,5) -- (5,5) -- (5,6) -- (7,6) -- (7,8);
\draw[color=red,ultra thick] (1,3)  -- (1,8) -- (3,8) -- (3,9) -- (6,9);
\node[rectangle,fill=blue,inner sep=2pt] at (4,1) {};
\node[rectangle,fill=blue,inner sep=2pt] at (9,6) {};
\foreach \x in {2,...,4} {
	 \node[rectangle,fill=blue,inner sep=2pt] at (5-\x,\x) {};
	 \node[rectangle,fill=red,inner sep=2pt] at (10-\x,\x+5) {};
}
\foreach \x in {1,...,3} {
	 \node[rectangle,fill=red,inner sep=2pt] at (4-\x,\x) {};
	 \node[rectangle,fill=blue,inner sep=2pt] at (9-\x,\x+5) {};
}
\end{tikzpicture} & \hspace{-0.5cm}
\begin{tikzpicture}[scale=0.4]
\SetVertexMath
\GraphInit[vstyle=Art]
\SetUpVertex[MinSize=3pt]
\SetVertexLabel
\tikzset{VertexStyle/.style = {
shape = circle,
shading = ball,
ball color = black,
inner sep = 2pt
}}
\SetUpEdge[color=black]
\Vertex[x=4,y=0,NoLabel]{v_1}
\Vertex[x=6,y=0,NoLabel]{v_2}
\Vertex[x=5,y=1,NoLabel]{v_3}
\Vertex[x=4,y=2,NoLabel]{v_4}
\Vertex[x=6,y=2,NoLabel]{v_5}
\Vertex[x=3,y=3,NoLabel]{v_6}
\Vertex[x=5,y=3,NoLabel]{v_7}
\Vertex[x=4,y=4,NoLabel]{v_8}
\Vertex[x=6,y=4,NoLabel]{v_9}
\Vertex[x=4,y=5,NoLabel]{v_{10}}
\Vertex[x=6,y=5,NoLabel]{v_{11}}
\Vertex[x=4,y=6,NoLabel]{v_{12}}
\Vertex[x=5,y=6,NoLabel]{v_{13}}
\Vertex[x=7,y=6,NoLabel]{v_{14}}
\Edges(v_1,v_3,v_4,v_6,v_8,v_{10},v_{12});
\Edges(v_2,v_3,v_5,v_7,v_9,v_{11},v_{13});
\Edges(v_4,v_7,v_8);
\Edges(v_{11},v_{14});
\draw[ultra thick,color=olive]  (3,3) circle (0.2);
\draw[ultra thick,color=olive]  (5,6) circle (0.2);
\draw[ultra thick,color=olive]  (7,6) circle (0.2);
\node at (5,-1) {$\mathrm{Int}^{R+B}$};
\end{tikzpicture}}
 \end{center}
\caption{Example~\ref{ex:tswapalg} for the involution $\tau$: the interlacing network is $\Gamma_{9,9}^{4}$; on the left is~$(R,B) \in \mathrm{PNCPath}(\{2,4,6\},\{2,4,6\})$ and~$\tau(R,B) \in \mathrm{PNCPath}(\{2,4,6\},\{3,5,7\})$ is on the right. The intersection poset $\mathrm{Int}^{R+B}$ is depicted far right. } \label{fig:tswapex}
\end{figure}

We proceed to define the involution $\tau$. So we assume from now on that~$G$ is~$k$-bottlenecked. Let $N \subseteq V$ be the subset guaranteed by the~$k$-bottlenecked property of $G$. If $|N| < k$, then $\mathrm{PNCPath}(G) = \emptyset$; so we may assume \mbox{$|N| = k$}. Let~$(I,J) \in \mathrm{Pat}(G)$ and let $(R,B) \in \mathrm{PNCPath}(I,J)$. Here we use $R$ for ``red'' and $B$ for ``blue'' as the example below will make clear. Say~$R = (r_1,\ldots,r_{k-1})$ and $B = (b_1,\ldots,b_k)$ and set~$\Pi := R+B$. Because $N$ is non-returning there is a subset of $N$ of size $k-1$ consisting of $2$-crossings of~$\Pi$ which in fact is a $(k-1)$-antichain of~$\mathrm{Int}^{\Pi}$. It is also clear that there is no antichain of size greater than $k-1$: indeed, given an antichain $U$ of $\mathrm{Int}^{\Pi}$ and any two elements~$u,v \in U$, for each $r \in R$ we have that~$r \in \ell^{\Pi}(u) \Rightarrow r \notin \ell^{\Pi}(v)$; but on the other hand, for any~$u \in U$, there must be some $r \in R$ with $r \in \ell^{\Pi}(u)$. So by Proposition~\ref{prop:antichain}, we conclude that $\mathcal{A}_{k-1}(\mathrm{Int}^{\Pi})$ has a minimum. Starting with this minimum antichain, we define $\tau(R,B)$ by the algorithm below.

\begin{framed}
\begin{center}
{\bf Algorithm defining $\tau$}
\end{center}
\noindent {\sc initialization}:\\
\indent Let $U \subseteq \mathrm{Int}^{\Pi}$ be the minimum of $\mathcal{A}_{k-1}(\mathrm{Int}^{\Pi})$. \\
\indent Let $\mathrm{FLIP}_0 := U$. \\
\indent Initialize the counter $c$ to $0$. \\
\indent Let $b_{n_c}$ be the  unique $b_i$ with $b_{n_c} \notin \ell^{\Pi}(u)$ for all $u \in U$. \\
\indent If there is $w$ in the downset of $U$ with $b_{n_c} \in \ell^{\Pi}(w)$: \\
\indent \indent Let $w_{c+1}$ be maximal in the downset of $U$ with $b_{n_c} \in \ell^{\Pi}(w_{c+1})$. \\
\indent \indent Increment the counter $c$ by $1$. \\
\indent \indent Enter the loop. \\
\indent Else: \\
\indent \indent Skip the loop. Proceed directly to output. \\
\noindent {\sc loop}: \\
\indent Let $r_{m_c}$ be the unique $r_i$ with $r_{m_c} \in \ell^{\Pi}(w_c)$. \\
\indent Let $v_c$ be minimal in $\mathrm{Int}^{\Pi}$ with $w_c < v_c$ and $r_{m_c} \in \ell^{\Pi}(v_c)$. \\
\indent Let $\mathrm{FLIP}_{c} := \mathrm{FLIP}_{c-1} \, \Delta \{v_c,w_c\}$ (with $\Delta =$ ``symmetric difference''). \\
\indent Let $b_{n_c}$ be the unique $b_i$ with $b_{n_c} \in \ell^{\Pi}(v_c)$. \\
\indent If there is $w$ with $b_{n_c} \in \ell^{\Pi}(w)$ and $w < v_c$: \\
\indent \indent Let $w_{c+1}$ be maximal in $\mathrm{Int}^{\Pi}$ with $w_{c+1} < v_c$ and $b_{n_c} \in \ell^{\Pi}(w_{c+1})$. \\
\indent \indent Increment the counter $c$ by $1$. \\
\indent \indent Return to the beginning of the loop. \\
\indent Else: \\
\indent \indent Exit the loop. \\
\noindent {\sc output}: \\
\indent Define $\tau(R,B) := (R',B')$ where $R' + B' := \mathrm{flip}_{\mathrm{FLIP}_c}(\Pi)$.
\end{framed}

\begin{example} \label{ex:tswapalg}
Before we prove the correctness of this algorithm, we give an example run of it. Let our network be $\Gamma_{9,9}^{4}$ and consider the pair of tuples of noncrossing paths $(R,B) \in \mathrm{PNCPath}(\{2,4,6\},\{2,4,6\})$ depicted in Figure~\ref{fig:tswapex}. Suppose $R = (r_1,\ldots,r_3)$ and $B = (b_1,\ldots,b_4)$ so the paths are labeled in left-to-right order in the figure. To apply $\tau$ to~$(R,B)$, first we find the minimum $3$-antichain $U$ in $\mathrm{Int}^{\Pi}$ where $\Pi := R+B$. This vertices in this antichain are circled by small olive-colored circles in Figure~\ref{fig:tswapex}, and the poset $\mathrm{Int}^{\Pi}$ is depicted to the right in the figure. In this case it turns out that~$U = \{(7,3),(2,3),(1,4)\}$. We initialize $\mathrm{FLIP}_0 := \{(7,3),(2,3),(1,4)\}$ and find that $n_0 = 2$. There is some~$w$ in the downset of $U$ with $b_2 \in \ell^{\Pi}(w)$, so we set $w_1 := (5,5)$ and enter the loop.
\begin{enumerate}
\item We find $m_1 = 2$ and $v_1 = (3,4)$, and we set
\[\mathrm{FLIP}_1 := \{(7,3),(2,3),(1,4),(5,5),(3,4)\}.\]
We find $n_1 = 3$ and there is $w$ with $b_3 \in \ell^{\Pi}(w)$ and $w < v_1$, so we set~$w_2 := (5,6)$ and enter the loop again.
\item We find $m_2 = 2$ and $v_2 = (5,5)$, and we set
\[ \mathrm{FLIP}_2 := \{(7,3),(2,3),(1,4),(3,4),(5,6)\}.\]
We find $n_2 = 2$ and there is $w$ with $b_2 \in \ell^{\Pi}(w)$ and $w < v_2$, so we set~$w_3 := (7,5)$ and enter the loop again.
\item We find $m_3 = 1$ and $v_3 = (7,3)$, and we set
\[ \mathrm{FLIP}_3 := \{(2,3),(1,4),(3,4),(5,6),(7,5)\}.\]
We find $n_3 = 1$ and there is no $w$ with $b_1 \in \ell^{\Pi}(w)$ and $w < v_3$, so we exit the loop.
\end{enumerate}
Finally, we define $\tau(R,B) := (R',B')$ where $R' + B' := \mathrm{flip}_{\mathrm{FLIP}_3}(\Pi)$. The elements of $\mathrm{FLIP}_3$ are circled by large light green circles in Figure~\ref{fig:tswapex} and $\tau(R,B)$ is shown to the right of~$(R,B)$. Note that $\tau(R,B) \in \mathrm{PNCPath}(\{2,4,6\},\{3,5,7\})$ and this is consistent with Theorem~\ref{thm:tswap} because~$\{3,5,7\} \in \mathrm{eswap}(\{2,4,6\})$.
\end{example}

We proceed to verify the correctness of the algorithm defining $\tau$. In the following series of claims we refer to the variables defined above in the description of the algorithm. In particular, $c$ refers to the value of the counter at the end of a run of the algorithm. Also, say~$R' = (r'_1,\ldots,r'_{k-1})$ and~$B' = (b'_1,\ldots,b'_{k})$ and set $\Pi' := R' + B'$. Some of this analysis is tedious but it is all necessary.

\begin{claim} \label{claim:algwelldef}
 For $i \in [c]$, there exists a unique $u_i$ with $w_i < u_i$ and~$r_{m_i} \in \ell^{\Pi}(u_i)$ such that $u_i \in U$.
\end{claim}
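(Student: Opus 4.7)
The plan is to prove Claim~\ref{claim:algwelldef} by induction on $i \in [c]$, using two structural observations about the minimum $(k-1)$-antichain $U$.

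First I would establish that $U$ meets the $k-1$ red paths bijectively. Since the paths of $R$ are pairwise vertex-disjoint and likewise for $B$, every vertex of $\mathrm{Int}^{\Pi}$ is a $2$-crossing lying on exactly one red and one blue path; in particular $|\ell^{\Pi}(u) \cap R| = 1$ for each $u \in U$. Because two elements of $U$ on the same red path would be comparable along it, the antichain $U$ hits the $k-1$ red paths injectively, hence bijectively. Applied to $r_{m_i}$, this shows there is at most one $u \in U$ with $r_{m_i} \in \ell^{\Pi}(u)$, giving the uniqueness in the statement. What remains is existence: the unique element $u_i \in U$ on $r_{m_i}$ must satisfy $w_i < u_i$.

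Since $w_i$ and $u_i$ both lie on $r_{m_i}$ they are comparable in $\le$, so it suffices to rule out $u_i \le w_i$. My strategy is to show inductively that $w_i$ lies in the downset of $U$, from which the inequality follows by a short antichain argument. In the base case $i = 1$ this is immediate from the algorithm's definition of $w_1$. For $i \ge 2$ the inductive hypothesis gives $w_{i-1} < u_{i-1}$, so $u_{i-1}$ is itself an intersection on $r_{m_{i-1}}$ strictly above $w_{i-1}$; by the minimality of $v_{i-1}$ this forces $v_{i-1} \le u_{i-1}$, and combining with $w_i < v_{i-1}$ (from the algorithm) yields $w_i \le u_{i-1}$ with $u_{i-1} \in U$.

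Given $w_i \le u$ for some $u \in U$, I would then rule out $u_i \le w_i$ as follows. The equality $u_i = w_i$ puts $w_i \in U$: in the base case this forces $b_{n_0} \in \ell^{\Pi}(u_1)$, contradicting that $b_{n_0}$ is the unique blue path missed by $U$; for $i \ge 2$ it forces $w_i < u_{i-1}$ with both in $U$ (and $w_i \ne u_{i-1}$, since otherwise $u_{i-1} < v_{i-1} \le u_{i-1}$), violating the antichain property. A strict inequality $u_i < w_i$ together with $w_i \le u$ for some $u \in U$ gives either $w_i \le u_i < w_i$ (if $u = u_i$) or $u_i < u$ within $U$ (if $u \ne u_i$), both absurd. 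Hence $w_i < u_i$, closing the induction. The main obstacle I anticipate, beyond careful bookkeeping, is recognizing that $u_{i-1}$ competes with $v_{i-1}$ for the role of ``minimal intersection above $w_{i-1}$ on $r_{m_{i-1}}$''; once the relation $v_{i-1} \le u_{i-1}$ is spotted, threading $w_i$ into the downset of $U$, the rest is routine antichain manipulation.
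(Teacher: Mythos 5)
Your proof is correct and follows essentially the same route as the paper's: uniqueness from the fact that the antichain $U$ meets each red path exactly once, comparability of $w_i$ and $u_i$ along $r_{m_i}$, the base case via $w_1$ lying in the downset of $U$, and the inductive step using $w_i < v_{i-1} \leq u_{i-1}$ plus the antichain property of $U$. The only difference is expository: you spell out the antichain argument that justifies $w_1 < u_1$ (where the paper simply asserts ``$w_1 \leq u_1$ by definition''), and you carry the intermediate fact that $w_i$ lies in the downset of $U$ through the induction rather than folding it into a single contradiction as the paper does.
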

\begin{proof} This claim is required for the algorithm to make sense because it shows that $v_i$ is always well-defined. This claim also shows that $v_i$ always belongs to the downset of $U$. Observe that the uniqueness is trivial because for any~$r \in R$ there is a unique $u \in U$ with $r \in \ell^{\Pi}(u)$. So existence is what is at issue. We prove this claim by induction on $i$. For $i = 1$ it is clear because~$w_1 \leq u_1$ by definition, where~$u_1$ is the unique element of $U$ with $r_{m_1} \in \ell^{\Pi}(u_1)$, and~$w_1 \neq u_1$ because~\mbox{$b_{n_0} \in \ell^{\Pi}(w_1)$} but $b_{n_0} \notin \ell^{\Pi}(u_1)$. So suppose $i > 1$ and the claim holds for smaller $i$. Then assume $w_i \geq u_i$ where~$u_i$ is the unique element of $U$ with $r_{m_i} \in \ell^{\Pi}(u_i)$. Note that~$w_i < v_{i-1}$ and $v_{i-1} \leq u_{i-1}$ by our inductive assumption. Thus we conclude~$u_i < u_{i-1}$. But this contradicts the fact that $U$ is an antichain. So in fact $w_i < u_i$. The claim follows by induction. \end{proof}

\begin{claim} \label{claim:algterminates}
The algorithm terminates.
\end{claim}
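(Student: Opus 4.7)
My plan is to prove termination by exhibiting a strictly monotonic sequence in the finite poset $\mathrm{Int}^{\Pi}$ and appealing to the descending chain condition. Specifically, I would claim that the sequence $v_1, v_2, v_3, \ldots$ of ``upper'' intersections produced inside the loop is \emph{strictly decreasing} in the partial order on $\mathrm{Int}^{\Pi}$. Since $\mathrm{Int}^{\Pi}$ is finite, no strictly decreasing sequence can be infinite, and the loop must exit after at most $|\mathrm{Int}^{\Pi}|$ iterations.

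To establish $v_{c+1} < v_c$, the plan is to argue in two steps. First, for the weak inequality $v_{c+1} \leq v_c$: by construction $w_{c+1}$ is an intersection satisfying $w_{c+1} < v_c$ with $b_{n_c} \in \ell^{\Pi}(w_{c+1})$, and $v_{c+1}$ is the minimal intersection above $w_{c+1}$ on the red path $r_{m_{c+1}}$. When $v_c$ itself lies on $r_{m_{c+1}}$ (as occurs whenever $m_{c+1} = m_c$), the minimality of $v_{c+1}$ immediately gives $v_{c+1} \leq v_c$. When $v_c$ does not lie on $r_{m_{c+1}}$, I would invoke Claim~\ref{claim:algwelldef}: there is a unique $u_{c+1} \in U$ with $v_{c+1} \leq u_{c+1}$ and $r_{m_{c+1}} \in \ell^{\Pi}(u_{c+1})$, and the antichain property of $U$, together with $v_c \leq u_c$, forces the needed comparison through the shared blue path $b_{n_c}$ passing through both $v_c$ and $w_{c+1}$.

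Second, for strict inequality $v_{c+1} \neq v_c$: I would use the \emph{maximality} defining $w_{c+1}$ (the largest intersection below $v_c$ on $b_{n_c}$). If hypothetically $v_{c+1} = v_c$, then the blue label of $v_{c+1}$ coincides with $b_{n_c}$, so $n_{c+1} = n_c$; running the next iteration of the loop would then reproduce $w_{c+2} = w_{c+1}$ verbatim, showing the algorithm could never have advanced to iteration $c+2$ in the first place without having already taken the ``exit'' branch. Thus equality cannot occur during the loop body.

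The main obstacle I anticipate is the case when both the red path index $m_c$ and the blue path index $n_c$ shift between consecutive iterations, since then the comparison between $v_c$ and $v_{c+1}$ cannot be read off from motion along a single path. The key to bridging this gap is that $w_{c+1}$ lies on both $b_{n_c}$ (shared with $v_c$) and $r_{m_{c+1}}$ (shared with $v_{c+1}$), giving a two-edge zig-zag $v_c \succeq w_{c+1} \preceq v_{c+1}$ in the intersection poset; turning this zig-zag into a genuine inequality $v_{c+1} < v_c$ requires the antichain $U$ provided by the $k$-bottlenecked hypothesis to serve as a common upper bound, exactly as in Claim~\ref{claim:algwelldef}.
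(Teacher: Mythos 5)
Your plan is to show the sequence $v_1, v_2, \ldots$ is strictly decreasing in $\mathrm{Int}^{\Pi}$, but the argument has two genuine gaps, and the approach is materially different from (and weaker than) the one the paper actually uses.

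First, the weak inequality $v_{c+1} \leq v_c$ is only justified when $m_{c+1} = m_c$, i.e.\ when $v_c$ and $v_{c+1}$ lie on the same red path $r_{m_c}$ so that minimality of $v_{c+1}$ bites. In the remaining case you gesture at Claim~\ref{claim:algwelldef} and the antichain property of $U$, but these do not produce the comparison: Claim~\ref{claim:algwelldef} gives $v_c \leq u_c$ and $v_{c+1} \leq u_{c+1}$ for two elements $u_c, u_{c+1}$ of $U$, which are \emph{incomparable} by the antichain property, so no relation between $v_c$ and $v_{c+1}$ follows. More fundamentally, the configuration $v_c \geq w_{c+1} \leq v_{c+1}$ is a zigzag with a common lower bound; two elements above a common lower bound in a poset are, in general, incomparable, and nothing you have invoked rules that out here. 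Second, the strictness argument is circular: you note that $v_{c+1} = v_c$ would force $n_{c+1} = n_c$ and hence $w_{c+2} = w_{c+1}$, and by induction the loop repeats forever. That is precisely the non-termination you are trying to exclude, not a contradiction with any independently established fact; the algorithm carries no memory and would simply run forever in that scenario.

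The paper's proof uses a different and more robust invariant: it shows the sequence $w_1, w_2, \ldots$ is \emph{injective}. The key observation is backward determinism: for $i > 1$, $w_i$ determines $n_{i-1}$ (via its blue label), hence $v_{i-1}$ (the immediate predecessor of $w_i$ on $b_{n_{i-1}}$), hence $m_{i-1}$ (the red label of $v_{i-1}$), hence $w_{i-1}$ (the immediate successor of $v_{i-1}$ on $r_{m_{i-1}}$). So $w_i = w_j$ with $i, j > 1$ forces $w_{i-1} = w_{j-1}$, and the chain bottoms out at $i=1$, where the maximality defining $w_1$ within the downset of $U$, combined with Claim~\ref{claim:algwelldef} placing all later $v$'s in that downset, gives the contradiction. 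Finiteness of $\mathrm{Int}^{\Pi}$ then finishes. If you want to salvage a monotonicity approach, you would first need to actually prove that $v_{c+1}$ and $v_c$ are comparable (perhaps via the planarity arguments in Lemma~\ref{lemma:achainpair} and Lemma~\ref{lemma:indexseq}), and find a non-circular source of strictness; the paper sidesteps both difficulties.
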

\begin{proof} We claim it is impossible that $w_i = w_j$ for $i < j$. If $i > 1$, then~$w_i = w_j$ implies $w_{i-1} = w_{j-1}$. So suppose $i=1$. Then $w_1 = w_j$ for some~$j > 1$ implies that~$v_{j-1} > w_1$ with $b_{n_0} \in \ell^{\Pi}(v_{j-1})$. But $v_{j-1}$ is in the downset of~$U$ and $w_1$ was chosen to be maximal in the downset of~$U$ such that~\mbox{$b_{n_0} \in \ell^{\Phi}(w_1)$}, which is a contradiction. So indeed $w_i \neq w_j$ for all~$i \neq j$. Therefore, the algorithm terminates since $\mathrm{Int}^{\Pi}$ is finite. \end{proof}

\begin{claim} \label{claim:noncrossing}
The tuples of paths $R'$ and $B'$ are noncrossing.
\end{claim}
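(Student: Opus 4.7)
The plan is to verify that $R'$ is pairwise noncrossing (the argument for $B'$ being entirely analogous) by combining a local analysis at each flip vertex with a global planarity argument.

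First I would show that every vertex $v \in \mathrm{FLIP}_c$ is a 2-crossing of $\Pi$ through which exactly one red path of $R$ and one blue path of $B$ pass. For $v \in U$ this is a pigeonhole argument: since $U$ is a $(k-1)$-antichain contained in $N$, and each of the $k-1$ red paths must hit $N$ by the $k$-bottlenecked hypothesis while meeting at most one $u \in U$, each $u \in U$ is met by exactly one red path; the analogous count for the $k$ blue paths (noncrossing among themselves) places exactly one blue path at each $u \in U$, with a single leftover blue $b_{n_0}$. For the loop vertices $w_i$ and $v_i$, Claim~\ref{claim:algwelldef} places them in the downset of $U$, while the algorithm's definitions of $m_i$ and $n_i$ explicitly name the unique red path $r_{m_i}$ and the unique blue path ($b_{n_{i-1}}$ at $w_i$, $b_{n_i}$ at $v_i$) passing through them.

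With this preliminary in hand, each individual flip in $\mathrm{FLIP}_c$ swaps the outgoing portions of one red and one blue path, so $R' + B' = \mathrm{flip}_{\mathrm{FLIP}_c}(\Pi)$ uses the same multiset of edges as $\Pi$ but reshuffles which edge belongs to which path. To show $R'$ is noncrossing, I would suppose for contradiction that $r'_i \neq r'_j$ share some vertex $v$ and case-split on whether $v \in \mathrm{FLIP}_c$. If $v \in \mathrm{FLIP}_c$, the analysis of the previous paragraph shows only one red path passes through $v$ after the flip, a contradiction. If $v \notin \mathrm{FLIP}_c$, then locally at $v$ each of $r'_i, r'_j$ coincides with either an original red path in $R$ or an original blue path in $B$; noncrossing of $R$ rules out the ``both red'' subcase and noncrossing of $B$ rules out ``both blue,'' leaving only the mixed subcase.

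The main obstacle is this mixed subcase: excluding a vertex $v \notin \mathrm{FLIP}_c$ at which one new red path continues along an old red and the other along an old blue. The chain structure of $\mathrm{FLIP}_c$ is what rules this out: $w_i$ is chosen maximal in the downset of $U$ with $b_{n_{i-1}} \in \ell^{\Pi}(w_i)$, $v_i$ is minimal above $w_i$ with $r_{m_i} \in \ell^{\Pi}(v_i)$, and $U$ itself is the minimum of $\mathcal{A}_{k-1}(\mathrm{Int}^{\Pi})$. These extremal choices together imply that between consecutive chain vertices there is no intersection at which a red-blue swap could occur unannounced. I would formalize this by induction on $c$: the inductive hypothesis is that after running $c'$ iterations of the loop, the partial tuple $\mathrm{flip}_{\mathrm{FLIP}_{c'}}(\Pi)$ still has internally noncrossing red and blue families, and the inductive step uses the extremal properties of $v_{c'+1}$ and $w_{c'+1}$ to propagate the invariant through one further flip.
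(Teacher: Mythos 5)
Your preliminary observation and the reduction to the case $v \notin \mathrm{FLIP}_c$ are on the right track, and you've correctly singled out the ``mixed'' subcase as the crux. But the inductive hypothesis you propose to close that case is false, and this is a genuine gap. Specifically, you want to show by induction on $c'$ that ``after running $c'$ iterations of the loop, the partial tuple $\mathrm{flip}_{\mathrm{FLIP}_{c'}}(\Pi)$ still has internally noncrossing red and blue families.'' The intermediate tuples are in general \emph{not} noncrossing, and the algorithm is explicitly designed to detect and repair the remaining crossings one at a time. Concretely, after the initial flip at $U$ the path $b_{n_0}$ keeps its blue color while, at any $w < u$ for $u \in U$, the red path through $w$ has been recolored blue; so if $b_{n_0}$ meets some red path below $U$, the vertex $w_1$ that the algorithm then selects is a genuine two-blue crossing of $B'_0$. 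The same phenomenon recurs: $w_{i+1}$ is always a surviving bad vertex of $\Pi'_i$. Only at the terminal step $i = c$, precisely when no such $w$ exists, is the tuple actually noncrossing.

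The correct invariant is weaker and color-asymmetric. For $w$ in the downset of $U$ one must track: (1) if some $r \in R$ labels $w$ in $\Pi$, then some blue path labels $w$ in $\Pi'_i$; (2) if some $b \in B$ with $b \neq b_{n_i}$ labels $w$ in $\Pi$, then some red path labels $w$ in $\Pi'_i$; and (3) if $i > 0$ and $w \geq v_i$, the same red-label conclusion holds even when $b = b_{n_i}$. From this one deduces that a bad vertex at step $i$ can only lie on $b_{n_i}$ strictly below $v_i$, which is exactly the condition under which the algorithm does not terminate, giving $i < c$. Proving (1)--(3) inductively uses the fact that $\Pi'_i$ differs from $\Pi'_{i-1}$ by flips at $v_i$ and $w_i$, which only alter labels below those two vertices, together with the extremal choices of $v_i$ and $w_i$. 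Your plan to ``propagate the invariant'' using those extremal choices is the right instinct, but it must be applied to this refined, one-defect-tolerant invariant rather than to full noncrossingness of the intermediate tuples. One further small point: your claim that ``noncrossing of $R$ rules out the both-red subcase'' silently assumes $r'_i$ and $r'_j$ locally follow \emph{distinct} original red paths; ruling out the possibility that they locally coincide along the same $r_a$ needs a word, though it is not the hard part.
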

\begin{proof} For $0 \leq i \leq c$, let us define $R'_i$ and $B'_i$ by $R'_i+ B'_i := \mathrm{flip}_{\mathrm{FLIP}_i}(\Pi)$. For such $i$, set $\Pi'_i := R'_i + B'_i$. For all such $i$ we have $\mathrm{Int}^{\Pi} = \mathrm{Int}^{\Pi'_i}$ as unlabeled posets. Thus, the only way that  one of~$R'_i$ or $B'_i$ could fail to be noncrossing is if there were $w \in \mathrm{Int}^{\Pi'_i}$ where $w$ is a $2$-crossing of $R'_i$ or of $B'_i$. Let us say that such a $w$ is \emph{bad at step $i$}. We claim that if there is a $w$ which is bad at step $i$, then~$i < c$. Clearly this proves the claim because $R' = R'_c$, $B' = B'_c$, and~$\Pi' = \Pi'_c$.

If $w$ is bad at any step then $w$ must belong to the downset of $U$ as otherwise~$w$ would be above every vertex that we flip at. So we restrict our attention to $w$ in the downset of $U$. For $0 \leq i \leq c$ and $w$ in the downset of~$U$, we claim the following:
\begin{enumerate}
\item $r \in \ell^{\Pi}(w)$ for any $r \in R$ implies $b' \in \ell^{\Pi'_i}(w)$ for some~$b' \in B'_i$;
\item $b \in \ell^{\Pi}(w)$ for any $b \neq b_{n_i} \in B$ implies $r' \in \ell^{\Pi'_i}(w)$ for some~$r' \in R'_i$;
\item if $i >0$ and $w \geq v_i$, then $b_{n_i} \in \ell^{\Pi}(w)$ implies $r' \in \ell^{\Pi'_i}(w)$ for some~$r' \in R'_i$.
\end{enumerate}
This claims implies that if there is a $w$ which is bad at step $i$, then $i < c$ as this claim shows such a $w$ would have to have $b_{n_i} \in \ell^{\Pi}(w)$, with $w < v_i$ if~$i > 0$. We prove this claim by induction on $i$. For $i=0$, (1) and (2) hold because we flip at $U$ to obtain $\Pi'_0$, and (3) does not apply. So now assume that~$i > 0$ and the claim holds for $i-1$. Note that $\Pi'_i$ is obtained from $\Pi'_{i-1}$ by flipping at $v_i$ and $w_i$. Therefore the labels of the intersection poset only change for vertices that are below $w_i$ or $v_i$. The effect is that for $w$ in the downset of $U$ we have the following:
\begin{itemize}
\item $b_{n_{i-1}} \in \ell^{\Pi}(w)$ now implies $r' \in \ell^{\Pi'_i}(w)$ for some $r' \in R'_i$ even if $w \leq w_i$;
\item if $w \leq v_i$, $b_{n_i} \in \ell^{\Pi}(w)$ may no longer imply $r' \in \ell^{\Pi'_i}(w)$ for any $r' \in R'_i$;
\item but if $w \geq v_i$, $b_{n_i} \in \ell^{\Pi}(w)$ still implies $r' \in \ell^{\Pi'_i}(w)$ for some $r' \in \Pi'_i$.
\end{itemize}
Note that $n_i = n_{i-1}$ is possible, but this is not an issue because $w_i < v_i$. So~(1), (2), and (3) above hold for $i$, and the claim follows by induction. \end{proof}

\begin{claim} \label{claim:involution}
The map $\tau$ is an involution.
\end{claim}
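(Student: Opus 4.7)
The plan is to show $\tau\circ\tau = \mathrm{id}$ by proving that the algorithm's flip set $\mathrm{FLIP}_c$, when the algorithm is run on the output $(R',B') = \tau(R,B)$, coincides with the $\mathrm{FLIP}_c$ produced by the original run on $(R,B)$. Since the individual maps $\mathrm{flip}_v$ are commuting involutions, this equality would give $\mathrm{flip}_{\mathrm{FLIP}_c}(\mathrm{flip}_{\mathrm{FLIP}_c}(\Pi)) = \Pi$, i.e., $\tau(\tau(R,B)) = (R,B)$.

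First I would recall from the proof of Claim \ref{claim:noncrossing} that $\mathrm{Int}^{\Pi} = \mathrm{Int}^{\Pi'}$ as unlabeled posets (flipping at $2$-crossings neither adds nor removes intersection vertices and preserves the covering relations). This immediately implies that the minimum of $\mathcal{A}_{k-1}(\mathrm{Int}^{\Pi'})$ equals the minimum $U$ of $\mathcal{A}_{k-1}(\mathrm{Int}^{\Pi})$, so both runs initialize $\mathrm{FLIP}_0 = U$ and apply the same initial block of flips.

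The heart of the argument is an induction showing that the loop in the algorithm, when run on $(R',B')$, iterates exactly $c$ times and traces the pairs $(w_i,v_i)$ in reverse order; concretely, the sequence of vertices selected along the reversed run is $v_c, w_c, v_{c-1}, w_{c-1}, \ldots, v_1, w_1$, so that $\{w'_i, v'_i\}_{i=1}^{c} = \{w_i, v_i\}_{i=1}^{c}$. For the base case $i=1$, one traces the flipped paths to see that the unique blue path in $B'$ missing from $\ell^{\Pi'}(U)$ is precisely the one whose top segment (above $U$) came from $b_{n_0}$ but whose bottom segment was spliced from $b_{n_c}$ via the chain of flips at $\{w_1,v_1,\ldots,w_c,v_c\}$; this forces the reversed algorithm's first selected vertex in the downset of $U$ to be $v_c$, and then Claim \ref{claim:algwelldef} together with the minimality condition in the loop forces the second selection to be $w_c$. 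For the inductive step, after the first $i$ reversed flips have been performed, one shows that the labels of intersection vertices in the tuple $\Pi'_i$ match the labels in $\Pi_{c-i}$ that existed just before the corresponding step of the original run, so that the max/min selection rules again pick out the reversed pair.

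The main obstacle will be the careful label bookkeeping in the inductive step: one must verify that the maximality of each original $w_i$ in the appropriate downset prevents the reversed algorithm from "skipping ahead" to an earlier intersection carrying the same blue label, and dually that the minimality of each $v_i$ prevents an intermediate crossing from being selected in place of $w_{c-i}$. Once the reversed sequence is confirmed, the flip sets of the two runs coincide and the involution property follows. The degenerate case $c = 0$ is handled separately and is straightforward: the initial $b_{n_0}$ is unchanged by $\mathrm{flip}_U$ outside the downset of $U$, so the second run also skips the loop and applies only $\mathrm{flip}_U$, recovering $(R,B)$ immediately.
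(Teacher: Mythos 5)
Your high-level strategy is the right one (and the same as the paper's): show that the flip set produced by running the algorithm on $(R',B')$ coincides with $\mathrm{FLIP}_c$, and then use the fact that the flips commute and are individually involutive. Your observations that $\mathrm{Int}^{\Pi} = \mathrm{Int}^{\Pi'}$ as unlabeled posets, that the minimum antichain $U$ is therefore the same for both runs, and your treatment of the degenerate case $c=0$, are all correct.

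However, the central claim of your argument — that the second run traces the pairs in reverse, selecting $v_c, w_c, v_{c-1}, \ldots, v_1, w_1$ — is false. In fact the second run reproduces the \emph{same} sequence $w_1, v_1, \ldots, w_c, v_c$, in the \emph{same} order, which is what the paper proves. To see that $w'_1 = w_1$ (and not $v_c$), note that every flip vertex of $\mathrm{FLIP}_c$ lies in the downset of $U$ (each $v_i \le u_i \in U$ by Claim~\ref{claim:algwelldef} and $w_i < v_i$), so above $w_1$ the curve $b'_{n'_0}$ coincides with $b_{n_0}$. Since $w_1$ is by definition the maximal element of $\mathrm{Int}^{\Pi}$ on $b_{n_0}$ lying in the downset of $U$, and $b'_{n'_0}$ passes through $w_1$, the maximal such element on $b'_{n'_0}$ is again $w_1$. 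The asymmetry you are trying to reverse is real: the $w_i$ are entered from above along blue paths and the $v_i$ are exited to above along red paths, and since the second run also begins at the minimum antichain $U$ and proceeds in the same downward/upward alternation, the roles of $w$ and $v$ cannot swap. (It happens that if your reversal claim were true the conclusion would still follow, since $\Delta$ is commutative and each $\{w_i,v_i\}$ is an unordered pair — but the claim itself is false, so the argument as stated has a genuine gap.) The inductive step should instead proceed as in the paper: given $w'_i = w_i$, the two paths of $\Pi'$ through $w_i$ are one red and one blue (by Claim~\ref{claim:noncrossing}), we entered on the blue one along the old $b_{n_{i-1}}$ direction, hence we must exit on the red one along the old $r_{m_i}$ direction, and there are no flip vertices strictly between $w_i$ and $v_i$ on $r_{m_i}$, giving $v'_i = v_i$; similarly $w'_{i+1} = w_{i+1}$.
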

\begin{proof} Suppose we run the algorithm again on $(R',B')$. Let us use primes to denote the variables for this run of the algorithm; so we have $\mathrm{FLIP}'_i$, $b'_{n'_i}$, $w'_i$, $v'_i$, $c'$ and so on. To show $\tau(R',B') = (R,B)$ it suffices to show~$\mathrm{FLIP}'_{c'} = \mathrm{FLIP}_c$. We claim that in fact $c = c'$ and $\mathrm{FLIP}'_{i} = \mathrm{FLIP}_{i}$ for all $0 \leq i \leq c$. Note first of all that~$\mathrm{FLIP}'_{0} = \mathrm{FLIP}_{0}$ because both of these equal the minimum $(k-1)$-antichain of $\mathrm{Int}^{\Pi}$ and we gave a characterization earlier of this antichain just in terms of~$\mathrm{Int}^{\Pi}$ as an abstract poset, independent of how it is labeled. It is clear that~$m'_0 = m_0$. If~$c = 0$, then no vertex on $b_{m_0}$ ever flips, so no vertex in this path belongs to the downset of~$U$ and thus we get~$c' = 0$ as well. If~$c > 0$ we get $w_1 = w'_1$ as these are both equal to the first place where $b_{m_0}$ intersects the downset of $U$. But then if $w_i = w'_i$, we get $v'_i = v_i$. This is because each element of $\mathrm{Int}^{\Pi'}$ has two paths in $\Pi'$ coming into it, and since~$R'$ and $B'$ are noncrossing these paths must be colored differently (where by the color of the path we mean in the sense of Figure~\ref{fig:tswapex}). We followed one of these paths in to arrive at $w'_i$ and thus we must follow the other out to arrive at $v'_i$. And if~$v'_i = v_i$ and $i < c$, then similarly we have~ $w'_{i+1} = w_{i+1}$. If~$i = c$ then both algorithms terminate on this step and so $c = c'$. The result follows by induction. \end{proof}

\begin{claim} \label{claim:swap}
There exists $J' \in \binom{[2k-1]}{k-1}$ which is a swap of $J$ so that for all paths~$\pi' \in \Pi'$ there is some $j' \in J'$ with $t_{j'}$ an end point of $\pi'$.
\end{claim}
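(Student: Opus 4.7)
The plan is to prove this claim via a ``walking interpretation'' of the flip operation. First, I would establish the following lemma: for a tuple $\Pi$ of paths and a set $F$ of $2$-crossings of $\Pi$, the path in $\mathrm{flip}_F(\Pi)$ beginning at source $s$ can be traced by starting on the original path $\pi \in \Pi$ through $s$ and, at each vertex of $F$ encountered (including $s$ itself if $s \in F$), switching to follow the other path of $\Pi$ through that vertex. This lemma follows by an easy induction on $|F|$, using the commutativity of flips at distinct $2$-crossings. Since $R$ and $B$ are each internally noncrossing, every vertex of $\mathrm{Int}^{\Pi}$ is a crossing between exactly one path of $R$ and one path of $B$, so each switch toggles the ``original color'' of the path currently being traversed. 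Hence a walk from $s_i$ with $i \in I$, which begins on a red path, ends at a sink in $\overline{J}$ if and only if it makes an odd number of switches. The claim then reduces to showing: for each $i \in I$, the walk from $s_i$ in $\mathrm{flip}_{\mathrm{FLIP}_c}(\Pi)$ makes an odd number of switches. Setting $J' := \{j \in [2k-1] : t_j \text{ is an endpoint of some } \pi' \in R'\}$ then gives a $(k-1)$-subset of $\overline{J}$, so $J'$ is a swap of $J$ with pivot $j^* \in \overline{J} \setminus J'$.

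I would prove this odd-parity statement by induction on the loop counter $c$, maintaining the stronger invariant: after computation of $\mathrm{FLIP}_c$, every walk from $s_i$ with $i \in I$ has odd switch count, and among walks from blue sources exactly one --- the walk starting at the source of $b_{n_c}$ --- has even switch count and ends at $b_{n_c}$'s original sink. The base case $c = 0$ (where $\mathrm{FLIP}_0 = U$) is direct: the walk from $s_i$ ($i \in I$) hits the unique $u \in U$ with $r_i \in \ell^{\Pi}(u)$ (which exists and is unique by the matching between red paths and elements of $U$ induced by the $k$-bottleneck condition), switches once to blue, and then continues without further switches since $U$ is an antichain of $\mathrm{Int}^{\Pi}$; the walk from the source of $b_{n_0}$ makes zero switches by definition of $n_0$; and each remaining blue-source walk switches exactly once.

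The inductive step is where the main work lies. Iteration $c$ modifies $\mathrm{FLIP}$ by the symmetric difference with $\{v_c, w_c\}$, where both $v_c$ and $w_c$ are $2$-crossings on $r_{m_c}$ (with $v_c$ above $w_c$ in the intersection poset), and where $b_{n_c} \in \ell^{\Pi}(v_c)$ while $b_{n_{c-1}} \in \ell^{\Pi}(w_c)$. The plan is to trace which walks have their parity changed by the toggle and to perform a case analysis on whether each of $v_c, w_c$ is being added to or removed from $\mathrm{FLIP}_{c-1}$ (as Example~\ref{ex:tswapalg} demonstrates, both possibilities arise). Using the minimality of $v_c$ above $w_c$ along $r_{m_c}$ and the maximality of $w_c$ along $b_{n_{c-1}}$ in the appropriate downset, one verifies that the net effect preserves the invariant: all red-source walks retain odd parity while the unique even-parity blue-source walk transfers from the source of $b_{n_{c-1}}$ to that of $b_{n_c}$. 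Applying the invariant at the loop's exit counter then yields $J' \subseteq \overline{J}$ with $J' \cap J = \emptyset$, completing the proof.
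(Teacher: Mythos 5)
Your overall plan is sound and closely parallels the paper's argument: both proofs boil down to a parity statement plus an alternation observation, and your ``walking interpretation'' of $\mathrm{flip}_F$ is correct and makes the parity-to-endpoint connection transparent. Your reduction to ``every red-source walk makes an odd number of switches'' is accurate, and your base case at $c=0$ is correct (the antichain property of $U$ guarantees that each red-source walk switches exactly once and each blue-source walk other than that from $b_{n_0}$ does the same).

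The gap is in the inductive step, which you defer to ``one verifies.'' The invariant you propose to maintain --- switch count per walk, i.e., per $\Pi'_i$-path --- changes \emph{non-locally} under the toggle $\mathrm{FLIP}_{i-1}\,\Delta\,\{v_i,w_i\}$: flipping at a single vertex can reroute a walk through an entirely different sequence of flip vertices, so the change in switch count along a given walk is not controlled by whether that walk passes through $v_i$ or $w_i$ locally. Carrying out your case analysis actually requires understanding how the global walk structure permutes, which is substantially more than the ``minimality of $v_c$, maximality of $w_c$'' properties you invoke. The paper instead tracks a genuinely local invariant: the parity of $|\mathrm{FLIP}_i \cap \mathrm{Vert}(\pi)|$ for each \emph{original} path $\pi\in\Pi$. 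Since both $v_i$ and $w_i$ lie on $r_{m_i}$, toggling them changes $|\mathrm{FLIP}_i \cap r_{m_i}|$ by $0$ or $\pm2$ (parity preserved), leaves every other $r_j$ untouched, and changes the counts on $b_{n_i}$ and $b_{n_{i-1}}$ each by $\pm1$ (or by $0$/$\pm2$ if $n_i=n_{i-1}$), so the parity bookkeeping is immediate. The paper then converts ``odd flips on $\pi$'' to ``$\pi$'s sink changes color'' via alternation of the segments of $\pi$ among $R'/B'$-paths, which is where Claim~\ref{claim:noncrossing} is implicitly used. Your invariant is in fact equivalent to the paper's (given Claim~\ref{claim:noncrossing}), so your claim is true, but as written the inductive step is the heart of the proof and is not established. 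If you instead count flips per original path, as the paper does, the induction becomes the trivial parity observation sketched above and the rest of your argument goes through unchanged.
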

\begin{proof} Let $j^*$ be such that $t_{j^*}$ is the endpoint of $b_{n_c}$. Then we may satisfy the claim by taking $J' := [2k-1] \setminus (J\cup\{j^*\})$. Indeed, for each $r \in R$, there are an odd number of $v \in \mathrm{FLIP}_c$ with $r \in \ell^{\Pi}(v)$; this is easily seen by considering the~$\mathrm{FLIP}_i$ inductively. Thus, as we follow the path $r$ after having flipped at all the vertices in $ \mathrm{FLIP}_c$, the color of the path will be red for a while, then blue, then red, and so on, and must eventually end blue. Also, there are an even number of $v \in \mathrm{FLIP}_c$ with $b_{n_c} \in \ell^{\Pi}(v)$: we can again easily prove inductively that for each $0 \leq i \leq c$ the number of $v \in \mathrm{FLIP}_i$ with $b_{n_i} \in \ell^{\Pi}(v)$ is even, while the number of $v \in \mathrm{FLIP}_i$ with $b \in \ell^{\Pi}(v)$ is odd for any $b \neq b_{n_i}$. Thus the end of~$b_{n_c}$ remains blue. So the endpoints of paths in $R'$ must be among $t_{j'}$ for~$j' \in J'$ as claimed. \end{proof}

Let $D$ denote the disc into which $G$ is embedded, and let $\partial D$ denote its boundary. Let $b \in B$ with start point $s$ and end point $t$. Denote by $\mathrm{rt}(b)$ (respectively, $\mathrm{lt}(b)$) the compact subset of the plane whose boundary is the closed curve obtained by adjoining $b$ with the arc on $\partial D$ that connects $t$ to~$s$ clockwise (respectively, counter-clockwise). It is easy to see~$\mathrm{rt}(b),\mathrm{lt}(b) \subseteq D$ and $\mathrm{rt}(b) \cap \mathrm{lt}(b) = b$. Also, we have $b_j \in \mathrm{rt}(b_i)$ if and only if~$j \geq i$, and similarly~$b_j \in \mathrm{lt}(b_i)$ if and only if~$j \leq i$. For~$r \in R$ we define $\mathrm{rt}(r)$ and~$\mathrm{lt}(r)$ analogously, and have the similar result that $r_j \in \mathrm{rt}(r_i)$ if and only if $j \geq i$, and~$r_j \in \mathrm{lt}(r_i)$ if and only if $j \leq i$.

\begin{lemma} \label{lemma:achainpair}
For $x_1, x_2 \in \mathrm{Int}^{\Pi}$ which are not related, if $\ell^{\Pi}(x_1) = \{r_{i_1},b_{j_1}\}$ and~$\ell^{\Pi}(x_2) = \{r_{i_2},b_{j_2}\}$ then $i_1 \leq i_2$ if and only if~$j_1 \leq j_2$.
\end{lemma}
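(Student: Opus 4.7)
My plan is to prove the contrapositive. First dispose of the degenerate cases: if $i_1 = i_2$ or $j_1 = j_2$, then $x_1$ and $x_2$ lie on a common path of $\Pi$, so they are comparable via $\preceq$. So assume $i_1 \neq i_2$ and $j_1 \neq j_2$, and by swapping $x_1 \leftrightarrow x_2$ if necessary that $i_1 < i_2$. The goal is then to show $j_1 < j_2$, which I do by assuming $j_1 > j_2$ and deriving that $x_1$ and $x_2$ are comparable.

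By the two boundary facts stated just before the lemma, $r_{i_2} \subseteq \mathrm{rt}(r_{i_1})$ and $b_{j_2} \subseteq \mathrm{lt}(b_{j_1})$, so $x_2 \in \mathrm{rt}(r_{i_1}) \cap \mathrm{lt}(b_{j_1})$. The curves $r_{i_1}$ and $b_{j_1}$ meet transversally only at $x_1$ and split the disc into four regions; label them by their boundary-endpoint pairs as $A$ (the \emph{source-source} region, touching both $s_{i_1}$ and the source of $b_{j_1}$), $C$ (the \emph{sink-sink} region), and $B, D$ (the two mixed regions). Since $\{A, C\}$ and $\{B, D\}$ are each pairs of diagonally-opposite regions at $x_1$, they lie on opposite sides of both $r_{i_1}$ and $b_{j_1}$; thus exactly two of the four $(\mathrm{rt}/\mathrm{lt}, \mathrm{rt}/\mathrm{lt})$-combinations are realized by $\{A, C\}$. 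The key planar claim I would verify is that $\mathrm{rt}(r_{i_1}) \cap \mathrm{lt}(b_{j_1})$ is always one of $A$ or $C$, never $B$ or $D$. This reduces to a short case check on the cyclic order of the endpoints $s_{i_1}$, source of $b_{j_1}$, sink of $r_{i_1}$, sink of $b_{j_1}$: locating the sources $s_{i_1+1}, \ldots, s_{2k-1}$ on the boundary arc of $\mathrm{rt}(r_{i_1})$ (as dictated by the stated fact) together with the analogous positions of the sources of $b_1, \ldots, b_{j_1-1}$ on the boundary of $\mathrm{lt}(b_{j_1})$ pins the intersection to $A$ in one configuration of the crossing and to $C$ in the other.

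Having placed $x_2$ in $A$ or $C$, I would finish by a Lindstr\"{o}m--Gessel--Viennot style crossing argument. Suppose $x_2 \in A$ (the $C$-case is symmetric, with the roles of ``before $x_1$'' and ``after $x_1$'' on $b_{j_1}$ exchanged). The portion of the disc-boundary arc of $\mathrm{rt}(r_{i_1})$ contributed by $A$ contains only sources, so the sink of $r_{i_2}$ must lie on the $D$-portion; the path $r_{i_2}$, passing through $x_2 \in A$ and ending in $D$, is therefore forced to cross the segment of $b_{j_1}$ running from its source to $x_1$. Let $y$ be the last such crossing along $r_{i_2}$; then $y \in \mathrm{Int}^{\Pi}$, and on $r_{i_2}$ the vertex $y$ is later than $x_2$ (so $y \preceq x_2$, giving $y \leq x_2$), while on $b_{j_1}$ it is earlier than $x_1$ (so $x_1 \preceq y$, giving $x_1 \leq y$). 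Concatenating yields $x_1 \leq y \leq x_2$, so $x_1$ and $x_2$ are comparable, contradicting the hypothesis. The main obstacle I anticipate is the geometric bookkeeping in the middle paragraph identifying $\mathrm{rt}(r_{i_1}) \cap \mathrm{lt}(b_{j_1})$ as a source-source or sink-sink region in each of the two crossing orientations; once this is settled, the final chain-building step is essentially a direct reading of the definition of $\leq$ on $\mathrm{Int}^{\Pi}$.
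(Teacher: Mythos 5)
Your argument is correct and follows essentially the same route as the paper's proof: dispose of the degenerate equal-index cases, reduce by symmetry to $i_1 < i_2$, assume $j_1 > j_2$, locate $x_2$ in the region of the disc cut off by $r_{i_1}$ and $b_{j_1}$ that touches only sources (resp.\ only sinks), and then force $r_{i_2}$ to cross $b_{j_1}$ on the source side (resp.\ sink side) of $x_1$, producing a comparability chain. The paper packages the two relevant regions directly as $Y_1$ (source--source) and $Y_2$ (sink--sink), bypassing the explicit four-region decomposition and the intermediate ``key planar claim'' you flag as the main obstacle; once one observes that $r_{i_2}$ cannot cross $r_{i_1}$ (noncrossing of $R$) and cannot exit through the boundary arc, the only escape route from the region is through $b_{j_1}$, which is exactly your crossing vertex $y$.
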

\begin{proof} We may assume the inequalities of the indices are strict because otherwise~$x_1$ and~$x_2$ would certainly be related. So let $x_1, x_2 \in \mathrm{Int}^{\Pi}$ be such that~$\ell^{\Pi}(x_1) = \{r_{i_1},b_{j_1}\}$ and $\ell^{\Pi}(x_2) = \{r_{i_2},b_{j_2}\}$ where $i_1 < i_2$ but~$j_1 > j_2$. Let~$s_{p_1}$ be the start point of $r_{i_1}$ and $t_{q_1}$ its end point, and let $s_{p_2}$ be the start point of $b_{j_1}$ and $t_{q_2}$ its end point. Assume by symmetry that $p_1 \leq p_2$, so $q_1 \leq q_2$. Let $Y_1$ (respectively, $Y_2$)  denote the compact subset of the plane that is bounded by the closed curve obtained by adjoining the subpath of~$r_{i_1}$ connecting $s_{p_1}$ to~$x_1$ (resp., the subpath of the reverse of $r_{i_1}$ connecting~$t_{q_1}$ to~$x_1$), the subpath  of the reverse of $b_{j_1}$ connecting $x_1$ to $s_{p_2}$ (resp, the subpath of $b_{j_1}$ connecting $x_1$ to $t_{q_2}$), and the arc on $\partial D$ connecting $s_{p_2}$ to~$s_{p_1}$ counter-clockwise (resp., the arc on $\partial D$ connecting $t_{q_2}$ to $t_{q_1}$ clockwise). Because $x_2$ lies in $\mathrm{lt}(r_{i_1}) \cap \mathrm{rt}(b_{j_1})$, it must lie in one of $Y_1$ or $Y_2$. Assume by symmetry that it lies in $Y_1$. We claim that the subpath of $r_{i_2}$ below $x_2$ cannot lie inside $Y_1$: if it did, its end point would lie clockwise between $s_{p_1}$ and $s_{p_2}$ on $\partial D$, contradicting our assumption about how sources and sinks of $G$ are arranged on this boundary. So it must exit $Y_1$. When it does so, it crosses $b_{j_1}$ above $x_1$. Thus $x_2 > x_1$. \end{proof}

\begin{lemma} \label{lemma:indexseq}
For $r \in R$, if $x_1 < \cdots < x_l$ are the elements of~$\mathrm{Int}^{\Pi} \cap \mathrm{Vert}(r)$ and~$b_{p_i} \in \ell^{\Pi}(x_i)$ for all $i \in [l]$, then $|p_i - p_{i-1}| \leq 1$ for all~$i > 1$.
\end{lemma}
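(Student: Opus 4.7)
The plan is to argue by contradiction, using a planar separation argument closely modeled on the proof of Lemma~\ref{lemma:achainpair}. Note there is nothing to show if $p_i = p_{i-1}$, and the two cases $p_{i-1} < p_i$ and $p_{i-1} > p_i$ are symmetric, so I would assume $|p_i - p_{i-1}| \geq 2$ with $a := p_{i-1} < c := p_i$ and pick any $b \in \mathbb{Z}$ with $a < b < c$. Then $b_a, b_b, b_c$ are three distinct paths in $B$, and because $B$ is noncrossing the observations immediately preceding the lemma give $b_b \subseteq \mathrm{rt}(b_a) \cap \mathrm{lt}(b_c) =: \Omega$, with the endpoints of $b_b$ lying strictly in the interiors of the two arcs of $\partial D$ that, together with $b_a$ and $b_c$, bound~$\Omega$.

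Next I would set up the topology: $\overline{\Omega}$ is a closed topological disc whose boundary is $b_a \cup b_c$ together with a ``source arc'' joining the sources of $b_a$ and $b_c$ on $\partial D$ and a ``sink arc'' joining their sinks. The path $b_b$ is a simple curve inside $\overline{\Omega}$ whose two endpoints lie one on each of these arcs, so by the Jordan curve theorem $b_b$ separates $\Omega$ into two connected components, one containing $b_a$ (minus endpoints) and the other containing $b_c$ (minus endpoints).

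Now consider the subpath $\rho$ of $r$ running from $x_i$ to $x_{i-1}$; since $x_{i-1} < x_i$ in $\mathrm{Int}^\Pi$, $x_i$ is encountered first along $r$. Because $x_{i-1}$ and $x_i$ are consecutive in $\mathrm{Vert}(r) \cap \mathrm{Int}^\Pi$, no interior vertex of $\rho$ lies on any path of $\Pi$ other than $r$; in particular no interior vertex of $\rho$ lies on $b_a$, $b_b$, or $b_c$. Also, interior vertices of $\rho$ cannot lie on $\partial D$, since $r$'s only contacts with $\partial D$ are its source and sink. Therefore, viewed as a curve, $\rho$ enters $\Omega$ at $x_i \in b_c$, cannot exit $\Omega$ in its interior (any such exit would require hitting $\partial \Omega$ at an interior vertex, contradicting the previous sentence), and finally reaches $x_{i-1} \in b_a$. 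This exhibits a continuous arc inside $\overline{\Omega}$ joining the $b_c$ side to the $b_a$ side without meeting $b_b$, contradicting the separation established in the previous paragraph.

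The only delicate step is justifying that $b_b$ is genuinely a chord of $\overline{\Omega}$ separating its two ``sides'' in the required way; once that is in place, the rest is the same kind of planarity-plus-consecutiveness argument used in Lemma~\ref{lemma:achainpair}, and I would expect it to be the main (though still mild) technical point to write carefully.
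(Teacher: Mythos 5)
Your proposal is correct, and it fleshes out exactly the argument the paper gestures at: the paper's entire proof is the single sentence ``This is an immediate consequence of the facts that $G$ is planar and $B$ is noncrossing,'' and your separation argument (choose an intermediate $b_b$, observe that it is a chord of the disc $\overline{\Omega}$ cut out by $b_a$ and $b_c$, and note that the subpath of $r$ between two consecutive elements of $\mathrm{Int}^{\Pi}\cap\mathrm{Vert}(r)$ cannot cross $b_a$, $b_b$, or $b_c$ at an interior vertex) is the natural way to make that one-liner precise. The only inessential assertion in your write-up is that $r$ meets $\partial D$ only at its source and sink; this is not needed, since escape from $\overline{\Omega}$ through $\partial D$ is impossible anyway because the whole graph lives inside $D$, and escape through $b_a$ or $b_c$ is ruled out by the consecutiveness of $x_{i-1}$ and $x_i$ together with planarity.
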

\begin{proof} This is an immediate consequence of the facts that $G$ is planar and~$B$ is noncrossing. \end{proof}

\begin{claim} \label{claim:balancedswap}
The sink pattern $J'$ is a balanced swap of $J$.
\end{claim}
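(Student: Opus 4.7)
The plan is to exploit the fact that the involution $\tau$ preserves the source pattern $I$, combined with a planar region-counting argument applied to the single blue path whose endpoint is the pivot. By Claim~\ref{claim:swap} we already know $J' = [2k-1] \setminus (J \cup \{j^*\})$, where $j^*$ is the index of the sink endpoint of $b_{n_c}$; so what remains is to verify $|J \cap [j^*]| = |J' \cap [j^*]|$. Since $j^* \notin J \cup J'$, this is the same as $|J \cap [j^*-1]| = |J' \cap [j^*-1]|$.

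The geometric key is that $b_{n_c}$, together with the arc on $\partial D$ from $t_{j^*}$ back to its source $s_p$, divides $D$ into two regions. Because $\Pi = R+B$ is a noncrossing tuple, every other path in $\Pi$ is vertex-disjoint from $b_{n_c}$; viewed as simple curves in $D$ whose endpoints lie on $\partial D$, each such path therefore lies entirely in one of the two regions cut out by $b_{n_c}$. Counting sources and sinks on each side of $b_{n_c}$ using the clockwise arrangement $s_1, \ldots, s_{2k-1}, t_{2k-1}, \ldots, t_1$ on $\partial D$, the counter-clockwise region contains exactly $p-1$ sources and $j^*-1$ sinks while the clockwise region contains $2k-1-p$ sources and $2k-1-j^*$ sinks. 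Because every remaining path matches a source to a sink within its region, this forces $p-1 = j^*-1$, i.e.\ $p = j^*$. In particular $j^* \notin I$.

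Now set $\alpha := |J \cap [j^*-1]|$. The $\alpha$ red paths ending at sinks $t_j$ with $j < j^*$ lie in the counter-clockwise region, and therefore must start at sources $s_i$ with $i < j^*$, i.e.\ at sources indexed by $I \cap [j^*-1]$. Matching sources to sinks in this region gives
\[ \alpha = |J \cap [j^*-1]| = |I \cap [j^*-1]|. \]
Running the same argument on the post-flip configuration $(R',B')$ (which is still a pair of noncrossing tuples by Claim~\ref{claim:noncrossing}, has source pattern $I$, and has a blue path still ending at $t_{j^*}$ and hence, by the region count, starting at $s_{j^*}$) yields
\[ \alpha' := |J' \cap [j^*-1]| = |I \cap [j^*-1]|. \]
Therefore $\alpha = \alpha'$, which is exactly the balanced-swap condition.

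The only real content is the planarity/region argument that pins down $p = j^*$ and confines each auxiliary path to one side of $b_{n_c}$; everything else is bookkeeping. The mild obstacle is to be careful that the flipping algorithm has preserved enough structure ($I$ is unchanged, $\Pi'$ is still noncrossing, the blue path reaching $t_{j^*}$ is unique) for the region count to apply identically before and after $\tau$, but these facts are already in hand from the earlier claims.
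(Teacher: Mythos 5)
Your proposal hinges on the assertion that ``because $\Pi = R+B$ is a noncrossing tuple, every other path in $\Pi$ is vertex-disjoint from $b_{n_c}$.'' This is false, and the error is fatal. The tuples $R$ and $B$ are each individually noncrossing, but a red path can (and typically does) intersect a blue path; indeed the entire algorithm defining $\tau$ is built around the nonempty intersection poset $\mathrm{Int}^{\Pi}$, whose elements are precisely the red--blue crossings. In particular $b_{n_c}$ is crossed by red paths, so $b_{n_c}$ together with a boundary arc does \emph{not} cut $D$ into two regions each containing a clean subset of the remaining paths. The region-counting argument therefore never gets off the ground.

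Even setting that aside, the inference ``every remaining path matches a source to a sink within its region, so $p-1 = j^*-1$'' is not sound. If you restrict to the blue paths (which really are disjoint from $b_{n_c}$), you only get an equality of blue source and blue sink counts on each side, not of total counts; the red sources and sinks are unconstrained by this separation, since the red paths cross $b_{n_c}$. There is no reason for the start point of $b_{n_c}$ to be $s_{j^*}$, and in general it is not. The subsequent claim that the post-flip blue path ending at $t_{j^*}$ still starts at $s_{j^*}$ is likewise unjustified.

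The paper's actual proof must confront the red--blue crossings head on. It partitions $D$ into strips $X_1, \ldots, X_k$ bounded by consecutive red paths, decomposes the ``changing tail'' of the algorithm into pieces $\widetilde{b}_0, \ldots, \widetilde{b}_c$ between successive flip vertices, and shows by a careful induction (using Lemmas~\ref{lemma:achainpair} and~\ref{lemma:indexseq}, the minimality of the starting antichain $U$, and the acyclicity and planarity of $G$) that $\widetilde{b}_i$ stays inside $X_{n_i}$ and that the region index only moves by $+1$. This eventually pins $t_{j^*}$ between the endpoints of $r_{n_c}$ and $r_{n_c+1}$ on $\partial D$, which is exactly the balanced-swap condition. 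You would need an argument at this level of detail; a single-curve separation argument cannot work here.
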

\begin{proof} Claim~\ref{claim:swap} tells us that $J'$ and $J$ are swaps of one another and their pivot~$j^*$ is such that $t_{j^*}$ is the endpoint of $b_{n_c}$. Define $w_{c+1}$ to be $t_{j^*}$. Define~$v_0$ to be first element that comes strictly before $w_1$ in $b_{n_0}$ and belongs to $\mathrm{Int}^{\Pi}$, or to be the start point of $b_{n_0}$ if there is no such element. Then for~$0 \leq i \leq c$, let~$\widetilde{b}_{i}$ be the subpath of $b_{n_i}$ connecting $v_i$ to $w_{i+1}$. Also, for~$i \in [k]$ define closed subsets~$X_i$ of $D$ by $X_1 := \mathrm{rt}(r_1)$, $X_i := \mathrm{lt}(r_{i-1}) \cap \mathrm{rt}(r_{i})$ if $1 < i < k$, and~$X_k := \mathrm{lt}(r_{k-1})$.

Our key subclaim is that for $0 \leq i \leq c$, the curve $\widetilde{b}_{i}$ lies in $X_{n_i}$. We prove this by induction on $i$. First of all, for any $i$ it is clear that each $\widetilde{b}_{i}$ must lie in one of the $X_j$ because if it did not it would have to intersect too many paths in~$R$ (only the start point and end point of  $\widetilde{b}_{i}$ can belong to~$\mathrm{Int}^{\Pi}$). So each~$\widetilde{b}_{i}$  intersects the interior of at most one of the $X_j$. First we deal with the base case~$i = 0$. Assume that $n_0 \in [2,2k-2]$; the cases~$n_0 = 1$ or $n_0 = 2k-1$ are very similar to what follows. By Lemma~\ref{lemma:achainpair}, there exist~$u_1, u_2 \in U$ with~$\ell^{\Pi}(u_1) = \{b_{n_0-1},r_{n_0-1}\}$ and $\ell^{\Pi}(u_2) = \{b_{n_0}+1,r_{n_0}\}$. Let $s_{j_1}$ be the start point of $b_{n_0-1}$, $s_{j_2}$ the start point of $b_{n_0+1}$, $s_{j_3}$ the start point of~$r_{n_0-1}$, and~$s_{j_4}$ the start point of $r_{n_0}$. Assume that $j_1 \leq j_3$ and $j_2 \leq j_4$ for simplicity of the following exposition; the other cases are symmetric. Let $Y_1$ (respectively,~$Y_2$) denote the compact subset of the plane that is bounded by the closed curve obtained by adjoining the subpath of~$b_{n_0-1}$ (resp., $b_{n_0 + 1}$) connecting $s_{j_1}$ to $u_1$ (resp., $s_{j_2}$ to $u_2$), the subpath of the reverse of~$r_{n_0-1}$ (resp., $r_{n_0}$) connecting $u_1$ to $s_{j_3}$ (resp., $u_2$ to $s_{j_4}$), and the arc on $\partial D$ connecting $s_{j_3}$ to~$s_{j_1}$ (resp., $s_{j_4}$ to $s_{j_2}$) counter-clockwise. We claim that $\widetilde{b}_0$ cannot intersect the interior of $Y_1$ or of $Y_2$. Suppose to the contrary; by symmetry, assume that $\widetilde{b}_0$ enters the interior of $Y_1$. First of all, if~$c=0$, that means the end point of $b_{n_0}$ lies on $\partial D$ clockwise between~$s_{j_1}$ and~$s_{j_3}$, but because~$j_1 < j_3$, this contradicts our assumption of how the sources and sinks of $G$ are arranged on this boundary. (Note $j_1 = j_3$ is impossible in this case because that would force the end point of $b_{n_0}$ to be~$s_{j_1}$ as well, creating a cycle.) So suppose $c > 0$. Then $w_1$ is in $Y_1$. It cannot be on $r_{n_0-1}$ because that would put it at or above $u_1$. But because the path~$b_{n_0}$ must exit $Y_1$, it must cross $r_{n_0-1}$ above $u_1$ after it visits $w_1$, and this means that~$u_1 < w_1$. But because $w_1$ belongs to the downset of $U$, this contradicts the fact that $U$ is an antichain. So indeed $\widetilde{b}_0$ does not enter the interior of $Y_1$ or of $Y_2$. But $b_{n_0}$ lies in $\mathrm{lt}(b_{n_0-1}) \cap \mathrm{rt}(b_{n_0+1})$; therefore~$\widetilde{b}_{n_0}$ lies in $\mathrm{lt}(r_{n_0-1}) \cap \mathrm{rt}(r_{n_0}) = X_{n_0}$.

Now assume $i > 0$ and the key subclaim holds for smaller values of $i$. We know that $\widetilde{b}_{i-1}$ lies in $X_{n_{i-1}}$, so either $r_{n_{i-1}-1}$ or $r_{n_{i-1}}$ is in $\ell^{\Pi}(w_i)$; let us assume by symmetry that it is $r_{n_{i-1}}$. Lemma~\ref{lemma:indexseq} gives $n_i = n_{i-1} + \delta$ for~$\delta \in \{-1,0,1\}$. First suppose that $\delta = 0$. Then we claim that $\widetilde{b}_{i}$ cannot enter the interior of~$X_{n_i+1}$; in particular, the subpath of $b_{n_i}$ connecting $v_i$ to $w_i$ cannot enter the interior of~$X_{n_i+1}$. Suppose that it did. This subpath must eventually enter the interior of $X_{n_i}$ by our inductive supposition and so it would have to cross $r_{n_i}$ at some point to do so. However, if it crossed~$r_{n_i}$ above $v_i$ this would cause a cycle in $G$, and if it crossed below $w_i$ this would also cause a cycle. So it would have to cross between $v_i$ and $w_i$; but this is also impossible because there are no elements of $\mathrm{Int}^{\Pi}$ that lie on $r_{n_i}$ between~$v_i$ and $w_i$. So indeed $\widetilde{b}_{i}$ lies in $X_{n_i}$.

Now suppose that $\delta \neq 0$. Let $s_{j_1}$ be the start point of~$r_{n_{i-1}}$ and $s_{j_2}$ be the start point of $b_{n_{i-1}}$. Assume $j_1 \leq j_2$ for simplicity of the following exposition; the other case is symmetric. Let $Y$ denote the compact subset of the plane that is bounded by the closed curve obtained by adjoining the subpath of~$b_{n_{i-1}}$ connecting $s_{j_1}$ to $w_i$, the subpath of the reverse of~$r_{n_{i-1}}$ connecting $w_i$ to $s_{j_2}$, and the arc on $\partial D$ connecting $s_{j_2}$ to~$s_{j_1}$ counter-clockwise. We claim that the subpath of $b_{n_i}$ below $v_i$ cannot enter the interior of $Y$. Suppose it did. Then it could not exit $Y$ because it cannot intersect $b_{n_{i-1}}$ at all, and it cannot intersect~$r_{n_{i-1}}$ above $w_i$ without creating a cycle. Thus the end point of $b_{n_i}$ lies on $\partial D$ clockwise between $s_{j_1}$ and $s_{j_2}$. But because $j_1 < j_2$, this contradicts our assumption of how the sources and sinks of $G$ are arranged on this boundary. (Note $j_1 = j_2$ is impossible in this case because that would force the end point of $b_{n_i}$ to be $s_{j_1}$ as well, creating a cycle.) So $\widetilde{b}_{i}$ does not enter the interior of $Y$. Thus $\widetilde{b}_{i}$ lies in $X_{n_{i-1}+1}$.

To finish the proof of the key subclaim, we need to show that $\delta \neq -1$. First consider the case $i = 1$. Let $u$ be the unique element of $U$ such that~$r_{n_0} \in \ell^{\Pi}(u)$. Note that $b_{n_0+1} \in \ell^{\Pi}(u)$ by Lemma~\ref{lemma:achainpair}. Also note that $w_1$ is the maximal element below $u$ with $\ell^{\Pi}(w_1) = \{r_{n_0},b_{n_0}\}$. So by Lemma~\ref{lemma:indexseq}, as we look at the~$b_{p_j}$ intersecting the vertices of~$r_{n_0}$ we encounter below $u$ but above $w_1$ we can never see $b_{n_0-1}$. Thus $\delta = 1$ as claimed. Now consider the case $i > 1$. Then either $r_{n_{i-1}-1}$ or $r_{n_{i-1}}$ is in $\ell^{\Pi}(v_{i-1})$. Suppose first that~$r_{n_{i-1}-1} \in \ell^{\Pi}(v_{i-1})$. Then $v_{i-1}$ and $v_i$ are unrelated and so by Lemma~\ref{lemma:achainpair} we get that $\delta \neq -1$. Suppose next that $r_{n_{i-1}} \in \ell^{\Pi}(v_{i-1})$. Then note that~$w_i$ is the maximal element below $v_{i-1}$ with $\ell^{\Pi}(w_1) = \{r_{n_{i-1}},b_{n_{i-1}}\}$. Also, the element of $\mathrm{Int}^{\Pi}$ below $v_{i-1}$ on $r_{n_{i-1}}$ is $w_{i-1}$ and has $b_{n_{i-2}} \in w_{i-1}$, so again by Lemma~\ref{lemma:indexseq} we get $b_{n_{i-2}} \in v_i$. So $n_i = n_{i-2}$ and by induction we obtain~$\delta = 1$ again. The key subclaim is thus proved by induction.

To conclude, note that $\widetilde{b}_{n_c}$ is in $X_{n_c}$ which means $t_{j^*}$ is clockwise between the end point of $r_{n_c}$ and $r_{n_c+1}$ on $\partial D$. Together with Claim~\ref{claim:swap}, this means exactly that $|J \cap [j^*]| = |J' \cap [j^*]|$. \end{proof}

\begin{claim} \label{claim:endswap}
If $G$ is interlacing then $J'$ is an end swap of $J$.
\end{claim}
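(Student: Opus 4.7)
The plan is to prove the claim by contradiction: suppose $j^* \in [2, 2k-2]$.

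First I would apply the $(k-1)$-sink-bottleneck property twice, to $B'$ and to $B$. Since $\bar{J'} = J \cup \{j^*\}$ and we are assuming $j^* \in [2,2k-2]$, the blue tuple $B'$ contains exactly $|J \cap [2,2k-2]| + 1$ paths ending at middle sinks. These paths are pairwise noncrossing and each must pass through $N_T$ (by the definition of $(k-1)$-sink-bottlenecked), so each vertex of $N_T$ supports at most one of them; pigeonhole gives $|J \cap [2,2k-2]| + 1 \le |N_T| \le k-1$. The same argument applied to the $2k-3-|J \cap [2,2k-2]|$ middle-ending paths of $B$ yields the reverse inequality, and together these force $|J \cap [2,2k-2]| = k-2$ exactly. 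Consequently $J$ meets $\{1,2k-1\}$ in exactly one element, and by symmetry I will assume $1 \in J$ and $2k-1 \notin J$, so that $r_1$ ends at the extreme sink $t_1$.

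Under this hypothesis the $k-1$ middle-ending blue paths of $B$ (and of $B'$) must biject with the $k-1$ vertices of $N_T$, occupying each vertex exactly once, which forces the unique blue end-path $b^{\mathrm{end}}$ of $B$ (ending at $t_{2k-1}$) to avoid $N_T$ altogether, since otherwise it would share a vertex with a middle blue path, contradicting that $B$ is noncrossing. Also, because $r_1$ ends at $t_1$, the strip $X_1 = \mathrm{rt}(r_1)$ contains no sinks, so the key subclaim of Claim~\ref{claim:balancedswap}'s proof (which places $t_{j^*}$ in $X_{n_c}$) already rules out $n_c = 1$, giving $n_c \ge 2$.

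The remaining task is to rule out $n_c \in [2, k]$ together with $j^* \in [2,2k-2]$, and for this I would use the sink-branching property of $N_T$. The idea is that any vertex $u \in N_T$ on $b_{n_c}$ has a well-defined ``end branch'' (the vertices downstream of $u$ from which $t_1$ or $t_{2k-1}$ is reachable) which by sink-branching meets the ``middle branch'' only at $u$; since $b^{\mathrm{end}}$ lies entirely outside $N_T$, a planar analysis in the style of Lemma~\ref{lemma:achainpair} and the key subclaim of Claim~\ref{claim:balancedswap} (decomposing the disc $D$ into regions cut out by subpaths of $b^{\mathrm{end}}$, $r_1$, and boundary arcs) shows that the subpath $\widetilde{b}_c$ can reach a middle sink $t_{j^*}$ only by crossing either a middle red path or the forbidden region around a vertex of $N_T$ — both of which are impossible. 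The only surviving possibility is $n_c = k$ with $t_{j^*} = t_{2k-1}$, violating $j^* \in [2,2k-2]$.

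The main obstacle will be the planar-topological argument in the last paragraph: making precise how the sink-branching at individual vertices of $N_T$, combined with the fact that $b^{\mathrm{end}}$ avoids $N_T$ entirely, constrains where $\widetilde{b}_c$ can lie. I expect the technical work to mirror the careful region-tracking in the proof of the key subclaim of Claim~\ref{claim:balancedswap}, adapted to use the branches of $G$ around vertices of $N_T$ in place of the strips $X_i$ bounded by red paths.
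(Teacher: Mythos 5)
Your preliminary reductions are correct and in fact sharper than anything the paper writes down explicitly: the double pigeonhole via the $(k-1)$-sink-bottleneck (applied to $B$ and to $B'$, using Claim~\ref{claim:noncrossing}) does force $|J \cap [2,2k-2]| = k-2$ under the contradiction hypothesis $j^* \in [2,2k-2]$; the partition of $N_T$ among the $k-1$ middle-ending blue paths then correctly forces $b^{\mathrm{end}}$ to avoid $N_T$; and ruling out $n_c=1$ via the key subclaim and the fact that the only sink on the boundary of $X_1=\mathrm{rt}(r_1)$ is $t_1$ is fine. None of this appears in the paper's proof, which does not use this counting at all.

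However, the core of the argument — the last paragraph — is a genuine gap, and I do not think it is merely a matter of tedium to fill in. You argue that $\widetilde{b}_c$ cannot reach a middle sink by appealing to the ``end branch'' of a vertex $u \in N_T$ on $b_{n_c}$ and to $b^{\mathrm{end}}$ avoiding $N_T$, but you never establish that $\widetilde{b}_c$ passes through that $u$ at all: $\widetilde{b}_c$ is only the final segment of $b_{n_c}$ from $v_c$ onward, and the unique $N_T$-vertex on $b_{n_c}$ may well lie strictly before $v_c$, in which case $\widetilde{b}_c$ is disjoint from $N_T$ and the sink-branching condition at that $u$ tells you nothing about it. Nor is the relevance of $b^{\mathrm{end}}$ made concrete: $b^{\mathrm{end}}$ is a different path from $b_{n_c}$, and you have not constructed a region bounded by $b^{\mathrm{end}}$ and other curves inside which $\widetilde{b}_c$ must lie. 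The claim ``the only surviving possibility is $n_c=k$ with $t_{j^*}=t_{2k-1}$'' is asserted rather than derived — note that even for $n_c=k$ the key subclaim only places $t_{j^*}$ in $X_k = \mathrm{lt}(r_{k-1})$, which in your case ($1 \in J$, $2k-1 \notin J$) contains \emph{several} sinks $t_j$ with $j \in [2,2k-2]$, so more must be said.

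The paper's proof goes a genuinely different route that you have not reconstructed: it forms the subtuple $\Pi_T$ of paths with middle endpoints, takes the \emph{minimum} $(k-2)$-antichain $U_T$ of $\mathrm{Int}^{\Pi_T}$ (this is where the sink-branching hypothesis on $N_T$ is actually used, to show every element of $U_T$ lies in the downset of $U$), and then tracks the unique path $\pi^*$ of $\Pi_T$ not labelled by $U_T$ through the steps $w_i, v_i$ of the algorithm. Because $U_T$ is an antichain in the \emph{intersection poset} rather than just a bottleneck set of vertices, it is compatible with the poset-theoretic machinery (downsets, maximality of $w_{i+1}$, etc.) already used in Claim~\ref{claim:balancedswap}; this is what lets the paper reuse the key subclaim to show $\widetilde{b}_i$ lands in $X_{n_i \pm 1}$ rather than $X_{n_i}$, a contradiction. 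Your plan works only with the static set $N_T$ and never connects it to the dynamic quantities $v_i, w_i$ defined from $U$; some analogue of ``$U_T$ is in the downset of $U$'' seems indispensable, and that is precisely the step missing from your outline.
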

\begin{proof} Let $\Pi_T$ be the subtuple of $\Pi$ consisting of paths whose end points are among $t_j$ for $j \in [2,2k-2]$. Let $N_T$ be the subset of $V$ guaranteed by the $(k-1)$-sink-bottlenecked property of $G$. There is a subset of $N_T$ of size~$k-2$ consisting of $2$-crossings of $\Pi_T$. This subset is a $(k-2)$ antichain of~$\mathrm{Int}^{\Pi_T}$ because~$N_T$ is non-returning. There are no antichains of $\mathrm{Int}^{\Pi_T}$ of greater cardinality. So~$\mathcal{A}_{k-2}(\mathrm{Int}^{\Pi_T})$ has a minimum; call that minimum $U_T$. We claim that $U_T$ belongs to the downset of $U$. To see this, let $U^T  \subseteq U$ be the set of those $u \in U$ for which $\ell^{\Pi}(u) \subseteq \Pi_T$. Note that because $N_T$ is sink-branching, it also must be that $U_T$ is sink-branching. So no element of~$U_T$ is greater than an element of $U \setminus U^T$; but also, every element of $U_T$ is comparable to some element of $U$. Thus if we let $U_{\mathrm{min}}$ be the set of minimal elements of $U \cup U_T$, there is a subset of $U_{\mathrm{min}}$ that belongs to~$\mathcal{A}_{k-2}(\mathrm{Int}^{\Pi_T})$ and is in the downset of~$U$. But~$U_T$ is minimal among all such antichains; so~$U_T$ must be in the downset of~$U$.

 Let $j^*$ be the pivot of $J$ and $J'$. We want to show that $j^* \notin [2,2k-2]$. Suppose to the contrary.  Recall the paths $\widetilde{b}_{i}$ and regions $X_{i}$ defined in the proof of Claim~\ref{claim:balancedswap}.  Let $\pi^{*}$ be the unique element of $\Pi_T$ not among the labels of elements of $U_T$. If $j^* \in [2,2k-2]$, it must be that there is $i$ such that~$\widetilde{b}_{i} = \pi^{*}$ and either $w_{i+1}$ is in he downset of $U_T$ or $w_{i+1} = t_{j*}$. This is because if $\widetilde{b}_{i}$ passes through some $u \in U_T$, that $u$ must either be $v_i$ or $w_{i+1}$. If that~$u$ is a~$w_{i+1}$, then~$v_{i+1}$ will not belong to the downset of $U_T$ so we will have to pass through~$U_T$ again at some later step. On the other hand, if that $u$ is a $v_{i}$, then we must have $i > 0$ and $w_{i-1}$ already belongs to the downset of $U_T$ and is strictly below an element of $U_T$. Thus indeed there exists $i$ such that $\widetilde{b}_{i}= \pi^{*}$ and with $w_{i+1}$ as described above. But then by the same logic as the second paragraph of the proof of Claim~\ref{claim:balancedswap}, we conclude that either $\widetilde{b}_i$ lies in $X_{n_i+1}$ (if~$1 \in J$) or $\widetilde{b}_i$ lies in $X_{n_i-1}$ (if $1 \notin J$). At any rate, we get that $\widetilde{b}_i$ does not lie in~$X_{n_i}$ which is a contradiction with the key subclaim in the proof of Claim~\ref{claim:balancedswap}. So indeed $j^* \notin [2,2k-2]$. \end{proof}
 
\begin{proof}[Proof of Theorem~\ref{thm:tswap}]: Claims~\ref{claim:algwelldef} and~\ref{claim:algterminates} establish that $\tau$ is well-defined, and Claim~\ref{claim:noncrossing} shows that $\tau$ maps into $\mathrm{PNCPath}(G)$. The map $\tau$ is weight-preserving because the multisets of edges visited by paths in $\Pi$ and in~$\Pi'$ are identical. Claim~\ref{claim:involution} shows $\tau$ is an involution. Claim~\ref{claim:balancedswap} describes the image~$\tau(\mathrm{PNCPath}(I,J))$, and Claim~\ref{claim:endswap} gives a more refined estimate on~$\tau(\mathrm{PNCPath}(I,J))$ when $G$ is interlacing. \end{proof}

\begin{proof}[Proof of Corollary~\ref{cor:coroftswap1}]:  For any $J, J' \in \binom{[2k-1]}{k-1}$ with $J'$ a balanced swap of~$J$, their pivot $j^*$ cannot be even, so we have $J'_{\mathrm{even}} = [2k-1]_{\mathrm{even}} \setminus J_{\mathrm{even}}$. Thus with~$K$ as in the statement of the corollary, by Theorem~\ref{thm:tswap} we have
\[ \tau \bigg( \bigcup_{\substack{(I,J) \in \mathrm{Pat}(G) \\ J_{\mathrm{even}} = K}} \mathrm{PNCPath}(I,J) \bigg) \subseteq\bigcup_{\substack{(I,J') \in \mathrm{Pat}(G) \\ J'_{\mathrm{even}}= K'}} \mathrm{PNCPath}(I,J').\]
But the reverse inclusion follows for the same reason. \end{proof}

\begin{proof}[Proof of Corollary~\ref{cor:coroftswap2}]: For $J, J', J'' \in \binom{[2k-1]}{k-1}$ as in the statement of the corollary, we have $\mathrm{eswap}(J) \subseteq J' \cup J''$, but we also have~$\mathrm{eswap}(J') \subseteq J$ and~\mbox{$\mathrm{eswap}(J'') \subseteq J$}. Then Theorem~\ref{thm:tswap} tells us that
\[\tau(\mathrm{PNCPath}(I,J)) \subseteq \mathrm{PNCPath}(I,J') \cup \mathrm{PNCPath}(I,J'')\]
and also the reverse inclusion. \end{proof}

\begin{remark} \label{rem:sourceswap}
The definition of $k$-bottlenecked is symmetric with respect to sources and sinks, so we can easily obtain from $\tau$ a source-swapping involution as well. We define $(G^{\mathrm{op}},S^{\mathrm{op}},T^{\mathrm{op}})$, the opposite network of $G$, as follows: $G^{\mathrm{op}}$ is the same graph as $G$ but with edge directions reversed, $S^{\mathrm{op}} := (t_{2k-1},\ldots,t_1)$, and $T^{\mathrm{op}} := (s_{2k-1},\ldots,s_1)$. For $I \subseteq [2k-1]$ set~$I^{\circ} := \{ 2k - i\colon i \in I\}$. There is a weight-preserving bijection~$\Psi\colon \mathrm{PNCPath}(G) \to \mathrm{PNCPath}(G^{\mathrm{op}})$ such that~$\Psi(\mathrm{PNCPath}(I,J)) = \mathrm{PNCPath}(I^{\circ},J^{\circ})$ for $(I,J) \in \mathrm{Pat}(G)$ whereby~$\Psi$ just reverses all paths. Suppose $G$ is $k$-bottlenecked. Then so is $G^{\mathrm{op}}$. So we may define the source-swapping involution $\sigma\colon \mathrm{PNCPath}(G) \to \mathrm{PNCPath}(G)$ by~$\sigma := \Psi^{-1} \circ \tau_{G^{\mathrm{op}}} \circ \Psi$ and it will satisfy
\[\sigma(\mathrm{PNCPath}(I,J)) \subseteq \bigcup_{I' \in \mathrm{bswap}(I)} \mathrm{PNCPath}(I',J),\]
for all $(I,J) \in \mathrm{Pat}(G)$. Here $\tau_{G^{\mathrm{op}}}$ denotes the involution $\tau$ defined above in this section but applied to the opposite network. The involution $\sigma$ leads to a source-swapping analogue of Corollary~\ref{cor:coroftswap1}. However, $G$ being interlacing does not in general imply that $G^{\mathrm{op}}$ is interlacing, so we do not in general get a source-swapping analogue of Corollary~\ref{cor:coroftswap2}.
\end{remark}

\section{Interlacing matrices and Pl\"{u}cker relations} \label{sec:matrices}

In this section, we provide an alternative algebraic proof to the second part of Corollary~\ref{cor:coroftswap2} using certain rank properties of matrices associated to interlacing networks. Here and throughout all matrices are real. A matrix is called totally nonnegative if all its minors are nonnegative. We define~$\mathrm{Mat}(m,n)$ to be the set of~$m \times n$ matrices and $\mathrm{Mat}^{\geq0}(m,n)$ the set of~$m \times n$ totally nonnegative matrices. We also use~$\mathrm{Mat}_{*}(m,n)$ to denote the set of~$m \times n$ matrices of full rank, and~$\mathrm{Mat}^{\geq0}_{*}(m,n)$ for the totally nonnegative matrices of full rank. The Lindstr\"{o}m--Gessel--Viennot (LGV)~\cite{lindstrom}~\cite{gessel} lemma provides a correspondence between totally nonnegative matrices and planar networks, and we would like to examine the properties of the class of totally nonnegative matrices that correspond to interlacing networks. Let~$(G,S,T)$ be a network. Define~$P_G$ to be the matrix whose entry at~$i,j$ is equal to~$\sum_{\pi \in \mathrm{NCPath}(\{s_i\},\{t_j\})}\mathrm{wt}(\pi)$. By the LGV lemma, since~$G$ is a planar, the matrix~$P_G$ is totally nonnegative. Moreover, we have $\det P_G[U|W]=\sum_{\Pi \in \mathrm{NCPath}(U,W)}\mathrm{wt}(\Pi)$ for $ m \in [2k-1]$ and~$U,W \in \binom{[2k-1]}{m}$

\begin{definition}
Let $k \geq 2$. An \emph{interlacing matrix} of order $2k-1$ is a totally nonnegative $2k-1 \times 2k-1$ matrix $A$ whose rank is at most $k$, such that the rank of $A\big[[2k-1]|[2,2k-2]\big]$ is at most $k-1$.
\end{definition}

\begin{prop} \label{prop:interlacing matrix}
Let $(G,S,T)$ be an interlacing network. Then $P_G$ is interlacing.
\end{prop}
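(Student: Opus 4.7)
The plan is to verify the three defining properties of an interlacing matrix in turn. Total nonnegativity of $P_G$ is immediate from the Lindström--Gessel--Viennot lemma together with the planarity of $G$, as noted just before the statement. So the real content is the two rank bounds, and both will follow from the same idea: factor $P_G$ (resp.\ $P_G[[2k-1]|[2,2k-2]]$) as a product of a thin matrix with at most $k$ (resp.\ $k-1$) columns by a short matrix with at most $k$ (resp.\ $k-1$) rows, by routing every path through the bottleneck set.

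For the bound $\mathrm{rank}(P_G)\le k$, let $N\subseteq V$ be the subset of size at most $k$ guaranteed by the $k$-bottlenecked property. For each $i\in[2k-1]$ and $v\in N$, I would set
\[
F_{iv} \;:=\; \sum_{\pi_1\colon s_i\to v}\mathrm{wt}(\pi_1),\qquad
H_{vj} \;:=\; \sum_{\substack{\pi_2\colon v\to t_j\\ \mathrm{Vert}(\pi_2)\cap N=\{v\}}}\mathrm{wt}(\pi_2),
\]
so $F$ is $(2k-1)\times|N|$ and $H$ is $|N|\times(2k-1)$. The key claim is that $P_G=F\cdot H$, which would immediately give $\mathrm{rank}(P_G)\le|N|\le k$. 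To prove this, I would observe that every path $\pi$ from $s_i$ to $t_j$ hits $N$ (by the $k$-bottlenecked hypothesis), so it has a well-defined last visit $v=v_{\mathrm{out}}(\pi)\in N$, yielding a decomposition $\pi=\pi_1\cdot\pi_2$ where $\pi_2$ meets $N$ only at $v$. Conversely, any pair $(\pi_1,\pi_2)$ contributing to $F_{iv}H_{vj}$ concatenates to a bona fide path: if some vertex $u\neq v$ appeared in both $\pi_1$ and $\pi_2$, then $\pi_1$ would give a path from $u$ to $v$ and $\pi_2$ a path from $v$ back to $u$, contradicting acyclicity of $G$. (Here non-returning of $N$ is what makes the "last $N$-vertex" decomposition well-behaved: the sub-path $\pi_1$ from $s_i$ to $v$ is allowed to re-enter $N$, but every such re-entry stays in $N$ by hypothesis, so $v$ is genuinely the unique last $N$-vertex.) Summing over $v\in N$ gives precisely $(P_G)_{ij}=\sum_{v\in N}F_{iv}H_{vj}$, as desired.

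For the second rank bound, I would apply the identical factorization argument with the bottleneck set $N_T$ of size at most $k-1$ supplied by the $(k-1)$-sink-bottlenecked property, restricted to the sink indices $j\in[2,2k-2]$. Setting
\[
F'_{iv} \;:=\; \sum_{\pi_1\colon s_i\to v}\mathrm{wt}(\pi_1),\qquad
H'_{vj} \;:=\; \sum_{\substack{\pi_2\colon v\to t_j\\ \mathrm{Vert}(\pi_2)\cap N_T=\{v\}}}\mathrm{wt}(\pi_2)\quad(j\in[2,2k-2]),
\]
the same decomposition-and-acyclicity argument yields $P_G[[2k-1]|[2,2k-2]]=F'\cdot H'$ and hence rank at most $|N_T|\le k-1$. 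Note that the sink-branching hypothesis in the definition of $(k-1)$-sink-bottlenecked plays no role in this proof; it was required for the sink-swapping involution in the previous section but the rank bound only needs $N_T$ to be non-returning.

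The main obstacle, such as it is, is the subtle bookkeeping in the factorization $P_G=F\cdot H$: one must verify both that every source-to-sink path decomposes uniquely according to its last $N$-vertex and that every pair $(\pi_1,\pi_2)$ being multiplied actually concatenates to a valid (vertex-simple) path. Both points reduce to the non-returning and acyclicity hypotheses, so once those are cleanly identified the argument is essentially a mechanical combinatorial factorization. Everything else in the proposition is then automatic.
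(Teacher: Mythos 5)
Your proof is correct, but it takes a genuinely different route from the paper's. The paper stays entirely within the Lindstr\"{o}m--Gessel--Viennot framework: it shows that every $(k+1)\times(k+1)$ minor of $P_G$ (respectively every $k\times k$ minor of $P_G[[2k-1]\,|\,[2,2k-2]]$) equals the sum over noncrossing tuples of paths, and argues these sums are empty because any such tuple would have to cross inside the size-$\leq k$ bottleneck $N$ (respectively size-$\leq k-1$ bottleneck $N_T$) by pigeonhole. Your argument instead establishes the stronger structural fact that $P_G$ literally factors as $F\cdot H$ with inner dimension $|N|$, by decomposing each source-to-sink path at its last $N$-vertex; the rank bound then drops out of linear algebra rather than from vanishing minors. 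Your verification that the concatenation of a pair $(\pi_1,\pi_2)$ is vertex-simple via acyclicity is the right point to check, and it is correct. Your factorization approach buys a slightly sharper statement (an explicit rank factorization rather than merely vanishing minors) and, as you observe, reveals that neither the non-returning nor the sink-branching hypotheses are used for this proposition --- they are needed only for the sink-swapping involution of \S\ref{sec:sinkswap}. The paper's approach, by contrast, is shorter given that LGV is already in hand, and fits more naturally with the paper's overall perspective of translating network statements into determinantal ones. Both are valid.
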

\begin{proof} In order to prove that the rank of $P_G$ is at most $k$, it is enough to show that every $(k+1) \times (k+1)$ minor of $P_G$ equals 0. Let $U,W \in \binom{[2k-1]}{k-1}$. Since~$G$ is $k$-bottlenecked, $\mathrm{NCPath}(U,W)=\emptyset$ and hence by the LGV lemma we have $\det P_G[U|W]=\sum_{\Pi \in \mathrm{NCPath}(U,W)}\mathrm{wt}(\Pi)=0$. Similarly, since $G$ is~$(k-1)$-sink-bottlenecked, after removing the first and the last column we get a matrix in which every $k \times k$ minor equals 0. \end{proof}

We therefore can reformulate the second part of Corollary~\ref{cor:coroftswap2} in terms of interlacing matrices as follows.

\begin{thm}\label{thm:intermat}
Let $k \geq 2$ and let $M$ be an interlacing matrix of order $2k-1$. Fix $I, J \in \binom{[2k-1]}{k-1}$ such that~$\{1,2k-1\} \cap J = \emptyset$. Set $J' := [2,2k-1] \setminus J$ and~$J'' = [1,2k-2] \setminus J$. Then
\[ \det M[\overline{I}|\overline{J}] \det M[I|J]=\det M[\overline{I}|\overline{J'}] \det M[I|J']+\det M[\overline{I}|\overline{J''}] \det M[I|J''] \]
where for $K \subseteq [2k-1]$ we define $\overline{K}=[2k-1]\setminus K$.
\end{thm}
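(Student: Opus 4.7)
The plan is to reduce the determinantal identity to a single three-term Grassmann--Pl\"{u}cker relation in $Gr(k, 2k)$, exploiting the two rank conditions of the interlacing matrix $M$ one at a time via an augmentation trick.

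\textbf{Reducing to per-$l$ identities.} Since $\mathrm{rank}(M) \le k$, I would factor $M = AB$ with $A \in \mathrm{Mat}(2k-1, k)$ and $B \in \mathrm{Mat}(k, 2k-1)$; by a density argument (both sides of the claim are polynomials in the entries of $M$, and the rank-$k$ matrices are dense in the rank-$\leq k$ variety) I may assume the block $A[\overline{I}|[k]]$ is invertible, so that $r_{\overline{I}} := \det A[\overline{I}|[k]] \ne 0$. Writing $q_W := \det B[[k]|W]$, $a_l := \det A[I|[k] \setminus \{l\}]$, and $\beta_l(T) := \det B[[k] \setminus \{l\}|T]$, Cauchy--Binet gives $\det M[\overline{I}|W] = r_{\overline{I}}\, q_W$ for $|W| = k$ and $\det M[I|T] = \sum_{l=1}^{k} a_l\, \beta_l(T)$ for $|T| = k-1$. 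Since the $a_l$ depend only on the rows of $A$ indexed by $I$ (disjoint from $\overline{I}$) and can be varied freely without affecting $r_{\overline{I}}$ or $B$, the claim reduces to showing, for each fixed $l \in [k]$,
\[ q_{\overline{J}}\, \beta_l(J) \;=\; q_{\overline{J'}}\, \beta_l(J') + q_{\overline{J''}}\, \beta_l(J''). \]

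\textbf{Augmentation to a Pl\"{u}cker identity.} Fix $l$ and augment $B$ to a $k \times 2k$ matrix $\widetilde{B}$ by appending the standard basis vector $e_l \in \mathbb{R}^k$ as a new column indexed by $2k$. Cofactor expansion along this new column gives $\tilde{p}_W := \det \widetilde{B}[[k]|W] = q_W$ when $W \subseteq [2k-1]$ has size $k$, and $\tilde{p}_{T \cup \{2k\}} = (-1)^{l+k}\,\beta_l(T)$ when $T \subseteq [2k-1]$ has size $k-1$. Thus the per-$l$ identity becomes, up to the uniform sign $(-1)^{l+k}$, a quadratic relation among Pl\"{u}cker coordinates of $\widetilde{B}$.

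\textbf{Applying Pl\"{u}cker and killing the extra terms.} I would then apply the Grassmann--Pl\"{u}cker relation in $Gr(k, 2k)$ with index sets $J$ (of size $k-1$) and $T' := [2k] \setminus J = \{1\} \cup \overline{J}_0 \cup \{2k-1, 2k\}$ (of size $k+1$), where $\overline{J}_0 := [2, 2k-2] \setminus J$. The three terms coming from $t \in \{1, 2k-1, 2k\}$ correspond, via the identifications $\overline{J'} = J \cup \{1\}$, $\overline{J''} = J \cup \{2k-1\}$, and $\overline{J} = \{1, 2k-1\} \cup \overline{J}_0$, exactly to the three summands in the desired identity. For each remaining $t \in \overline{J}_0$, the factor $\tilde{p}_{J \cup \{t\}} = q_{J \cup \{t\}}$ is a $k \times k$ minor of $B$ whose columns all lie in $[2, 2k-2]$; since $r_{\overline{I}}\, q_{J \cup \{t\}} = \det M[\overline{I}|J \cup \{t\}]$ is killed by the rank-$(k-1)$ condition on $M[\,\cdot\,|[2, 2k-2]]$ and $r_{\overline{I}} \ne 0$, this factor vanishes. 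This is precisely where the second, interlacing-specific rank hypothesis enters.

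\textbf{Main obstacle.} The one delicate point is sign bookkeeping: the Pl\"{u}cker signs depend on the interleaving of $J$ with $\{1, 2k-1, 2k\}$, and these must combine with the $(-1)^{l+k}$ from the augmentation to produce the $+, -, -$ sign pattern of the stated identity. This is routine but tedious; for example, in the $k=2$ base case the statement specializes to row expansion of a vanishing $3 \times 3$ determinant, which provides a useful sanity check.
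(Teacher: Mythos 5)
Your approach is correct but genuinely different from the paper's. The paper works directly inside the big Grassmannian $\mathrm{Gr}(2k-1, 4k-2)$ via the echelon-form embedding $\phi$: it translates the two rank hypotheses into vanishing conditions on Pl\"{u}cker coordinates of $\phi(M)$, then applies the $(k-1)$-fold Pl\"{u}cker relation (equation~\eqref{eqn:pluckerrelations} with $l = 2k-1$, $m=k-1$) and shows that of the $\binom{2k-1}{k-1}$ summands, all but two are killed by those vanishing conditions. You instead \emph{spend} the first hypothesis (rank $\le k$) up front to factor $M = AB$, use Cauchy--Binet to push everything down to $B$, and only then invoke Pl\"{u}cker --- but now in the much smaller $\mathrm{Gr}(k,2k)$, after the $e_l$-augmentation trick. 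Your relation has $k+1$ terms, of which $k-2$ vanish by the second rank hypothesis (the one on columns $[2,2k-2]$). This is structurally cleaner: the two hypotheses play disjoint roles (one gives the factorization, the other gives the vanishings), and the ambient Grassmannian is of fixed codimension in a natural sense. What the paper's approach buys in return is that it never leaves the original matrix, so there is no factorization, no choice of $A,B$, and no degenerate case to handle.

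Two remarks. First, the density argument you invoke is both not quite right and unnecessary. As stated it would require perturbing $M$ within the rank-$\le k$ variety \emph{while preserving} the second rank condition, which you do not address; the components of that intersection need not meet the locus where $M[\overline{I}|\cdot]$ has rank $k$. But no density is needed: if $\operatorname{rank} M[\overline{I}|\cdot] < k$, then every $k\times k$ minor $\det M[\overline{I}|W]$ vanishes, so every term of the identity carries a zero factor and the equality is trivial; otherwise pick $B$ of full rank $k$ in a factorization $M=AB$ and $A[\overline{I}|\cdot]$ is automatically invertible. Second, you correctly flag that the sign bookkeeping is the delicate step and you defer it; I verified it for $k=2$ (where, as you say, the statement collapses to the Laplace expansion of the vanishing $3\times 3$ determinant, and with $l=1$ the Pl\"{u}cker relation $p_{13}p_{24}=p_{12}p_{34}+p_{14}p_{23}$ gives exactly the per-$l$ identity after the factor $(-1)^{l+k}$ cancels from all three terms). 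The general-$k$ case is indeed routine but you should carry it out; this is where an error would most plausibly hide. Finally, one terminological nit: your opening sentence calls it "a single three-term Grassmann--Pl\"{u}cker relation," but what you actually use is the $(k+1)$-term relation associated to the pair $(J, [2k]\setminus J)$, reduced to three terms by the interlacing hypothesis --- the description in your third paragraph is the accurate one.
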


The proof of Theorem~\ref{thm:intermat} relies on the Pl\"{u}cker relations between minors of certain types of Grassmannians. For $n\geq l\geq 0$, the \emph{Grassmannian} $\mathrm{Gr}(l,n)$ is the space of~$l$-dimensional linear subspaces of $\mathbb{R}^n$. Another way to view the Grassmannian is as $\mathrm{Gr}(l,n) = GL(l) \setminus \mathrm{Mat}_{*}(l,n)$, where $GL(l)$ is the group of invertible~\mbox{$l \times l$} matrices. In other words, we can identify $\mathrm{Gr}(l,n)$ with the space of $l \times n$ real matrices of rank $l$ modulo row operations, where the~\mbox{$l \times l$} minors of the matrices form projective coordinates on the Grassmannian, called Pl\"{u}cker coordinates.  We denote those coordinates by~$\Delta_I(A)$ for~\mbox{$I \in \binom{[n]}{l}$} and~\mbox{$A \in \mathrm{Gr}(l,n)$}; that is,~$\Delta_I(A)$ represents the minor of $A$ defined by the columns $I=\{i_1,i_2\ldots,i_l\}$ with~\mbox{$i_1<i_2<\ldots<i_l$}. When~$A$ is clear from context we write simply $\Delta_I$ for~$\Delta_I(A)$. In order to simplify notation when swapping columns of minors, we use the conventions on these coordinates that~$\Delta_{(i_1,i_2,\ldots,i_l)}:=\Delta_{\{i_1,i_2\ldots,i_l\}}$ for~\mbox{$i_1<\ldots<i_l$} and~$\Delta_{(i_1,\ldots,i_j,i_{j+1},\ldots,i_l)}:=-\Delta_{(i_1,\ldots,i_{j+1},i_{j},\ldots,i_l)}$. The following set of relations on Pl\"{u}cker coordinates for any choice of~$m \in [l]$ are called the Pl\"{u}cker relations~\cite{postnikov}:
\small
\begin{equation}\label{eqn:pluckerrelations}
\Delta_{(p_1,\ldots,p_l)}\Delta_{(q_1,\ldots,q_l)}= \sum_{i_1< \ldots < i_m}\Delta_{(p_1,\ldots, q_{l-m+1},\ldots,q_l,\ldots p_l)}\Delta_{(q_1,q_2,\ldots,q_{l-m},p_{i_1},\ldots,p_{i_m})}.
\end{equation}
\normalsize
Here $(p_1,\ldots, q_{l-m+1},\ldots,q_l,\ldots p_l)$ denotes the tuple $(p_1,\ldots,p_l)$ with the entries~$p_{i_1},\ldots,p_{i_m}$ replaced by $q_{l-m+1},\ldots,q_l$ and vice versa for the other factor.

In a manner analogous to the above description of the Grassmannian as a quotient of a matrix space by a general linear group action, we define the \emph{totally nonnegative Grassmannian} to be $\mathrm{Gr}^{\geq 0}(l,n) :=  GL^{+}(l) \setminus \mathrm{Mat}_{*}^{\geq 0}(l,n)$, where $GL^{+}(l)$ is the group of invertible $l \times l$ matrices with positive determinant. In other words, the totally nonnegative Grassmannian is the subset of the Grassmannian for which Pl\"{u}cker coordinates are all nonnegative (or rather all of the same sign, since these coordinates are projective). So~$\mathrm{Gr}^{\geq 0}(l,n) \subseteq \mathrm{Gr}(l,n)$. There is a tight correspondence between totally nonnegative matrices and the totally nonnegative Grassmannian. In fact, there exists an embedding~$\phi\colon \mathrm{Mat}^{\geq 0}(l,n) \to \mathrm{Gr}^{\geq0}(l,l+n)$ of the form
\[\phi\colon A  \mapsto \begin{pmatrix}
1 & 0 & \cdots & 0 & 0 & 0 & (-1)^{l-1} a_{l1} & (-1)^{l-1} a_{l2} & \cdots & (-1)^{l-1} a_{ln} \\
\vdots & \vdots & \ddots & \vdots & \vdots & \vdots & \vdots & \vdots & \ddots & \vdots\\
0 & 0 & \cdots & 1 & 0 & 0 & a_{31} & a_{32} & \cdots & a_{3n}\\
0 & 0 & \cdots & 0 &1 & 0 & -a_{21} & - a_{22} & \cdots & -a_{2n} \\
0 & 0 & \cdots & 0 & 0 & 1 & a_{11} & a_{12} & \cdots & a_{1n}
\end{pmatrix}\]
such that
\begin{equation}\label{eq:mattograssman}
\det A[I|J] = \Delta_{([l]\setminus\{l+1-i_r,\dots,l+1-i_1\})\cup \{j_1+l,\dots,j_r+l\}}(\phi(A))
\end{equation}
for~$A=(a_{ij}) \in \mathrm{Mat}^{\geq 0}(l,n)$, $I=\{i_1,\dots,i_r\} \subseteq [l]$ and $J=\{j_1,\dots,j_r\} \subseteq [n]$.

For a number $a$ and a set $B=\{b_1,b_2,\ldots,b_r\}$ such that $b_1<b_2<\ldots<b_r$, denote by $\{a+B\}$ (respectively, $\{a-B\}$) the set $\{a+b_1,a+b_2,\ldots,a+b_r\}$ (resp., $\{a-b_r,a-b_{r-1},\ldots,a-b_1\}$). Let us set $l = 2k-1$ and $n = 2k-1$ in the last paragraph to get $\phi\colon  \mathrm{Mat}^{\geq 0}(2k-1,2k-1) \to \mathrm{Gr}^{\geq0}(2k-1,4k-2)$. Then the conclusion of Theorem~\ref{thm:intermat} is equivalent to the following equation on the Pl\"{u}cker coordinates of $\phi(M)$:
\begin{align}\label{eqn:pluckereq}
&\Delta_{[2k-1]\setminus \{2k - \overline{I}\} \cup \{2k-1+\overline{J}\}}\Delta_{[2k-1]\setminus \{2k - I\} \cup \{2k-1+J\}}=  \\
    &\Delta_{[2k-1]\setminus \{2k - \overline{I}\} \cup \nonumber \{2k-1+\overline{J'}\}}\Delta_{[2k-1]\setminus \{2k - I\} \cup \{2k-1+J'\}}\\ \nonumber
   &+\Delta_{[2k-1]\setminus \{2k - \overline{I}\} \cup  \{2k-1+\overline{J''}\}}\Delta_{[2k-1]\setminus \{2k - I\} \cup \{2k-1+J''\}}. \nonumber
\end{align}
We are now ready to present the proof of Theorem~\ref{thm:intermat}.

\noindent {\bf Proof of Theorem~\ref{thm:intermat}}: Because $M$ is interlacing, we have $\det M[W|V]=0$ if~$|W|=|V|>k$ or if $|W|=|V|>k-1$ and $1,2k-1 \notin V$.
Using (\ref{eq:mattograssman}), we get that the Pl\"{u}cker coordinates of $\phi(M)$ satisfy
\begin{enumerate}[(a)]
\item $\Delta_U=0$ for any $U=\{u_1,u_2,\dots,u_{2k-1}\}$ such that $u_1<\dots<u_{2k-1} $ and~$ u_{k-1}>2k-1 $; \label{cond:pluckvanish}
\item $\Delta_U=0$ for any $U=\{u_1,u_2,\dots,u_{2k-1}\}$ such that $u_1<\dots<u_{2k-1} $, $2k,2(2k-1) \notin U$ and $u_k>2k-1$. \label{cond:pluckvanish2}
\end{enumerate}

Now, note that since $\overline{J'}=\{1\} \cup J$ and $\overline{J''}=J \cup \{2k-1\}$, equation~\eqref{eqn:pluckereq} is equivalent to
\begin{align}\label{eqn:pluckereqequiv}
 &\Delta_{\{2k-I\} \cup [2k,2(2k-1)]\setminus \{2k - 1+ J\}}\Delta_{[2k-1]\setminus \{2k - I\} \cup \{2k-1+J\}} = \\ \nonumber
 & \Delta_{\{2k-I\} \cup \{2k\} \cup \{2k-1+J\}}\Delta_{[2k-1]\setminus \{2k - I\} \cup \{2k-1+J'\}}\\ \nonumber
 &+\Delta_{\{2k-I\} \cup  \{2k-1+ J\} \cup \{2(2k-1)\}}\Delta_{[2k-1]\setminus \{2k - I\} \cup \{2k-1+J''\}}. \nonumber
\end{align}
To show that~\eqref{eqn:pluckereqequiv} holds, we will use~\eqref{eqn:pluckerrelations} with $l=2k-1$ and $m=k-1$. According to the formula, we are summing over all the $\binom{2k-1}{k-1}$ ways in which we can put the $k-1$ elements of $\{2k-1+J\}$ in place of some~$k-1$ elements of~\mbox{$\{2k-I\} \cup [2k,2(2k-1)]\setminus \{2k - 1+ J\}$}. We will show that only two summands among the $\binom{2k-1}{k-1}$ summands appear on the right side of~\eqref{eqn:pluckerrelations} may be nonzero, and they are equal to the right side of~\eqref{eqn:pluckereqequiv}. First, consider the summands in which at least one element from~\mbox{$\{2k-1+J\}$} is placed instead of an element in $\{2k-I\}$. Since all of the elements in~\mbox{$[2k,2(2k-1)]\setminus \{2k - 1+ J\}$} are bigger than $2k-1$ and~$|\{2k-I\}|=k-1$ we are in the case~\ref{cond:pluckvanish}, which means that the resulting summand equals zero. Thus in order to obtain a nonzero summand, all the $k-1$ elements from~$\{2k-1+J\}$ must be placed instead of some $k-1$ elements from~\mbox{$[2k,2(2k-1)]\setminus \{2k - 1+ J\}$}. There are exactly~\mbox{$\binom{k}{k-1}=k$} such summands since~\mbox{$|[2k,2(2k-1)]\setminus \{2k - 1+ J\}|=k$}, and in each of the summands exactly one element from the set~$[2k,2(2k-1)]\setminus \{2k - 1+ J\}$ is not replaced by an element from $\{2k-1+J\}$, and all the other are replaced.  Note~$2k, 2(2k-1) \in [2k,2(2k-1)]\setminus \{2k - 1+ J\}$ since~$1,2k-1 \notin J$. We may choose one of the following to be the element that is not replaced: $2k$; $2(2k-1)$; or an element from $[2k,2(2k-1)]\setminus \{2k - 1+ J\}$ not equal to $2k$ or $2(2k-1)$. If we choose $2k$, the resulting summand is
\[\Delta_{\{2k-I\} \cup \{2k\} \cup \{2k-1+J\}}\Delta_{[2k-1]\setminus \{2k - I\} \cup \{2k-1+J'\}},\]
 and if we choose $2(2k-1)$ the resulting summand is
\[\Delta_{\{2k-I\} \cup  \{2k-1+ J\} \cup \{2(2k-1)\}}\Delta_{[2k-1]\setminus \{2k - I\} \cup \{2k-1+J''\}}.\]
If we choose an element which is not equal to $2k$ or $2(2k-1)$, then~\ref{cond:pluckvanish2} implies that the $k-2$ resulting summands equal zero. Thus we showed that~\eqref{eqn:pluckereqequiv} holds and so we are done. $ \square$

Because interlacing matrices are defined by certain rank conditions on submatrices, they can also be characterized as those totally nonnegative matrices for which some explicit set of minors is zero. This simple observation connects interlacing matrices to the combinatorial theory of total positivity developed by Postnikov in~\cite{postnikov}, which can be seen as an ``elementary'' approach to the general theory of total positivity initiated by Lusztig~\cite{lusztig}. We very briefly recap the matroid stratification of the totally nonnegative Grassmannian, without even defining exactly what a matroid is. For any~$\mathcal{M} \subseteq \binom{[n]}{l}$, define $\mathcal{S}_{\mathcal{M}} \subseteq \mathrm{Gr}(l,n)$ by
\[\mathcal{S}_{\mathcal{M}} := \{A \in \mathrm{Gr}(l,n)\colon \Delta_I(A) = 0 \textrm{ if and only if } I \in \mathcal{M}\}.\]
Define $\mathcal{S}_{\mathcal{M}}^{\geq 0} := \mathcal{S}_{\mathcal{M}} \cap \mathrm{Gr}^{\geq 0}(l,n)$. The $\mathcal{S}_{\mathcal{M}}^{\geq 0}$ stratify $ \mathrm{Gr}^{\geq 0}(l,n)$ in the sense that~$\cup_{\mathcal{M} \in \binom{[n]}{l}} \mathcal{S}_{\mathcal{M}}^{\geq 0} = \mathrm{Gr}^{\geq 0}(l,n)$ and $\mathcal{S}_{\mathcal{M}}^{\geq 0} \cap \mathcal{S}_{\mathcal{M}'}^{\geq 0} = \emptyset$ if $\mathcal{M} \neq \mathcal{M}'$. This stratification is called the \emph{matroid stratification} of the totally nonnegative Grassmannian. Also it turns out~\cite{postnikov} that each $\mathcal{S}_{\mathcal{M}}$ is either empty or a cell and conjecturally this stratification gives a regular CW decomposition of $\mathrm{Gr}^{\geq 0}(l,n)$. Define $\mathcal{M}^* \subseteq \binom{[4k-2]}{2k-1}$ by
\small
\[\mathcal{M}^*  := \left\{I \in \binom{[4k-2]}{2k-1}\colon \parbox{3in}{\begin{center}$|I\cap[2k,4k-2]| \geq k \textrm{ or }$ \\ $\Big(|I\cap[2k,4k-2]| \geq k-1 \textrm{ and } \{2k,4k-2\}\cap I =\emptyset\Big)$\end{center}} \right\}.  \]
\normalsize
Then the image of the space of interlacing matrices of order $2k-1$ under the map $\phi\colon  \mathrm{Mat}^{\geq 0}(2k-1,2k-1) \to \mathrm{Gr}^{\geq0}(2k-1,4k-2)$ is $\cup_{\mathcal{M}^* \subseteq \mathcal{M}} \mathcal{S}^{\geq 0}_{\mathcal{M}}$.

\begin{prop} \label{prop:interspace}
The space of interlacing matrices of order $2k-1$, viewed inside the Grassmannian via the map $\phi$, forms the closure of a cell in the matroid stratification of $\mathrm{Gr}^{\geq0}(2k-1,4k-2)$.
\end{prop}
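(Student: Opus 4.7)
The strategy is to exhibit a single cell $\mathcal{S}^{\geq 0}_{\mathcal{M}^*}$ whose closure equals the given union. This splits into two tasks: first, showing that this cell is nonempty, and second, showing that its closure coincides with the stated union.

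For the first task, I plan to exhibit a specific interlacing matrix whose Pl\"{u}cker coordinate vector vanishes on precisely $\mathcal{M}^*$. A natural candidate is $M = P_{\Gamma_{m,n}^{k}}$ with generic positive edge weights (say, algebraically independent), for $m$ and $n$ taken sufficiently large relative to $k$. By Proposition~\ref{prop:interlacing matrix} this $M$ is interlacing, so by the definition of $\mathcal{M}^*$ the Pl\"{u}cker coordinates of $\phi(M)$ indexed by $\mathcal{M}^*$ all vanish. The nontrivial direction is that for every $I \notin \mathcal{M}^*$ the coordinate $\Delta_I(\phi(M))$ is strictly positive. Via the identification~\eqref{eq:mattograssman}, this reduces to showing that the corresponding minor of $M$ is nonzero; by the Lindstr\"{o}m--Gessel--Viennot lemma it suffices in turn to produce at least one tuple of noncrossing paths of the required type in the grid $\Gamma_{m,n}^{k}$, since under generic weights a single nonzero contribution forces the whole sum to be positive.

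For the second task, I plan to invoke Postnikov's description of the matroid stratification of the totally nonnegative Grassmannian~\cite{postnikov}: each nonempty cell $\mathcal{S}^{\geq 0}_{\mathcal{M}}$ is a positroid cell, and the closure of such a cell inside $\mathrm{Gr}^{\geq 0}$ is a union of positroid cells whose indexing matroids contain $\mathcal{M}$ (i.e., have more vanishing Pl\"{u}cker coordinates). One inclusion, $\overline{\mathcal{S}^{\geq 0}_{\mathcal{M}^*}} \subseteq \bigcup_{\mathcal{M}^* \subseteq \mathcal{M}} \mathcal{S}^{\geq 0}_{\mathcal{M}}$, is immediate because the right-hand side is cut out by the closed conditions $\Delta_I = 0$ for $I \in \mathcal{M}^*$ and contains $\mathcal{S}^{\geq 0}_{\mathcal{M}^*}$. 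The reverse inclusion is precisely Postnikov's closure theorem applied to our cell once we know that cell is nonempty.

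The principal obstacle will be the combinatorial construction in the first step: explicitly building noncrossing path tuples in $\Gamma_{m,n}^{k}$ that witness each nonvanishing minor. This is a finite case analysis, organized by the quantity $r := |I \cap [2k, 4k-2]|$ and by whether $I$ contains the boundary indices $2k$ or $4k-2$. In each regime the zig-zag arrangement of sources and sinks of $\Gamma_{m,n}^{k}$, together with the abundance of room afforded by large $m$ and $n$, lets one construct the requisite tuples by standard lattice-path maneuvers; the bookkeeping, while tedious, is routine.
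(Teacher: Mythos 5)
Your overall strategy matches the paper's: exhibit a specific matrix lying in $\mathcal{S}^{\geq 0}_{\mathcal{M}^*}$, then invoke the closure description of positroid cells to identify $\bigcup_{\mathcal{M}^* \subseteq \mathcal{M}} \mathcal{S}^{\geq 0}_{\mathcal{M}}$ with $\mathrm{clo}(\mathcal{S}^{\geq 0}_{\mathcal{M}^*})$. The closure step you sketch is fine. However, your choice of witness matrix $M = P_{\Gamma_{m,n}^{k}}$ has a genuine gap: for $k \geq 3$ this matrix does \emph{not} lie in $\mathcal{S}^{\geq 0}_{\mathcal{M}^*}$, no matter how large $m$ and $n$ are taken.

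The trouble is the zigzag arrangement of the sources of $\Gamma_{m,n}^{k}$. Because the sources $s_1,\ldots,s_{2k-1}$ are packed into a staircase near the corner, the odd-indexed ones $N = \{s_1,s_3,\ldots,s_{2k-1}\}$ form a $k$-bottleneck that also sits \emph{between} the even-indexed sources and the sinks. This forces extra vanishing minors depending on the \emph{row} set $I$, which $\mathcal{M}^*$ does not prescribe (membership in $\mathcal{M}^*$ depends only on $U \cap [2k,4k-2]$, i.e.\ on the column set $J$ under~\eqref{eq:mattograssman}). Concretely, take $k=3$ and $I = \{1,2,3\}$, so the chosen sources are $(3,1),(2,1),(2,2)$. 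Any path starting at $s_2 = (2,1)$ must first step to $(3,1)$ or $(2,2)$, both of which are the start points of the other two paths; hence $\mathrm{NCPath}_{\Gamma_{m,n}^{3}}(\{1,2,3\},J) = \emptyset$ for \emph{every} $J$, and so $\det P_{\Gamma_{m,n}^{3}}[\{1,2,3\}\,|\,J] = 0$ even when $1 \in J$ or $2k-1 \in J$. Translating through $\phi$, this kills Pl\"{u}cker coordinates indexed outside $\mathcal{M}^*$, so $\phi(P_{\Gamma_{m,n}^{3}})$ lands in a strictly smaller stratum. The same phenomenon persists for all $k \geq 3$. The paper sidesteps this by building a different interlacing network whose sources are all spread out along one row of a rectangular grid (so there is no artificial source-side obstruction), and whose $(k-1)$-sink-bottleneck is engineered by gluing a smaller grid onto all sinks except the two extreme ones; that construction realizes exactly the vanishing pattern $\mathcal{M}^*$ and no more.
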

\begin{proof} If $\mathcal{M} \subseteq \binom{[4k-2]}{2k-1}$ is such that the cell $\mathcal{S}^{\geq 0}_{\mathcal{M}}$ is nonempty, then the closure of this cell is given by $\mathrm{clo}(\mathcal{S}^{\geq 0}_{\mathcal{M}}) = \cup_{\mathcal{M} \subseteq \mathcal{M}'} \mathcal{S}^{\geq 0}_{\mathcal{M}'}$. Thus in order to prove the proposition we need only show that $\mathcal{S}^{\geq 0}_{\mathcal{M}^*}$ is nonempty. To do so, we construct an interlacing network $G = (G,S,T)$ such that $\phi(P_G) \in \mathcal{S}^{\geq 0}_{\mathcal{M}^*}$. We use two auxiliary networks $G' = (G',S',T')$ and $G'' = (G'',S'',T'')$ to build $G$. The network $G'$ has underlying graph $\Gamma_{k,4k-3}$ with
\begin{align*}
S' &= (s'_1,\ldots,s'_{2k-1}) := ( (1,1), (1,2),\ldots, (1,2k-1) ); \\
T' &= (t'_1,\ldots,t'_{2k-1}) := (  (k,2k-1), (k,2k),\ldots, (k,4k-3) ).
\end{align*}
Observe that $G'$ is $k$-bottlenecked: we may take
\[N' := ( (1,2k-1), (2,2k-1), \ldots, (k,2k-1) )\]
as our bottleneck. The network $G''$ has underlying graph $\Gamma_{k-1,4k-7}$ with
\begin{align*}
S'' &= (s''_1,\ldots,s''_{2k-3}) := ( (1,1), (1,2),\ldots, (1,2k-3) ); \\
T'' &= (t''_1,\ldots,t''_{2k-3}) := ( (k-1,2k-3),\ldots, (k-1,4k-7) ).
\end{align*}
Similarly $G''$ is is $(k-1)$-bottlenecked: we may take
\[N'' := ( (1,2k-3), (2,2k-3), \ldots, (k-1,2k-3) )\]
as our bottleneck. The network $G$ has underlying graph $G' \sqcup G'' / \sim$, the disjoint union of the graphs $G'$ and $G''$ where we mod out by the equivalence relation $\sim$, where $t'_{i+1} \sim s''_i$ for $ i \in [2k-3]$ and all other vertices are inequivalent. The sources and sinks of~$G$ are given by $S := (s'_1,\ldots,s'_{2k-1})$ and~$T := (t'_1,t''_1,\ldots,t''_{2k-3},t'_{2k-1}).$ We can witness that $G$ is $k$-bottlenecked by taking $N = N'$ and can witness that $G$ is $(k-1)$-sink-bottlenecked by taking~$N_T = N''$. The edge-weight function $\omega$ of $G$ is defined to be $1$ for all edges. From our construction we get $\mathrm{NCPath}_G(I,J) = \emptyset$ for~$I,J \in \binom{[2k-1]}{m}$ if and only if either $m \geq k+1$, or~$m = k$ and $\{1,2k-1\} \cap J = \emptyset$. This translates exactly to~$\phi(P_G) \in \mathcal{S}^{\geq 0}_{\mathcal{M}^*}$, as desired. \end{proof}

\section{Birational RSK and the octahedron recurrence} \label{sec:octa}

We now return to our original motivation. Let $X = (x_{ij})$ be an $m \times n$ matrix with entries in $\mathbb{R}_{>0}$. This matrix will be our input to birational RSK. Our output will be a three-dimensional array that, subject to the proper normalization, will obey the octahedron recurrence. Recall that we are interested in the weights of tuples of noncrossing paths in $X$. Therefore, in this section we will work with the rectangular grid $\Gamma_{m,n}$. We set the edge-weight function $\omega\colon E \to \mathbb{R}_{>0}$ of $\Gamma_{m,n}$ to be $\omega((i,j),(i',j')) := \sqrt{x_{ij}x_{i'j'}}$. For a path $\pi$ in $\Gamma_{m,n}$, we define a modified weight by $\widehat{\mathrm{wt}}(\pi) := \sqrt{x_{ij}x_{i'j'}}\cdot\mathrm{wt}(\pi)$ where $(i,j)$ is the start point of $\pi$ and $(i',j')$ is its endpoint. And for a tuple~$\Pi = (\pi_1,\ldots,\pi_k)$ of paths we define~$\widehat{\mathrm{wt}}(\Pi) := \prod_{i=1}^{k} \widehat{\mathrm{wt}}(\pi_i)$. Similarly for a pair of tuples of paths $(\Pi,\Sigma)$ we define~$\widehat{\mathrm{wt}}(\Pi,\Sigma) := \widehat{\mathrm{wt}}(\Pi)\cdot\widehat{\mathrm{wt}}(\Sigma)$. This modified weight is defined in this way so that~$\widehat{\mathrm{wt}}(\pi) = \prod_{(i,j) \in \mathrm{Vert}(\pi)} x_{ij}$.

Now we define a three dimensional array $\overline{Y} = (\overline{y}_{i,j,k})$ whose indices run over all $i,j,k \in \mathbb{Z}$ which satisfy $0 \leq k \leq \mathrm{min}(i,j)$ and for which
there exist some~$a,b \in \{0,1\}$ such that $(i+a,j+b) \in \Gamma_{m,n}$, where the entries of $\overline{Y}$ are given by~$\overline{y}_{i,j,k} := \sum_{\Pi \in \mathrm{RSKPath}(i,j,k)} \widehat{\mathrm{wt}}(\Pi)$ with
\[\mathrm{RSKPath}(i,j,k) := \mathrm{NCPath}_{\Gamma_{m,n}}(\{(1,1),...,(1,k)\}, \{(i,j-k+1),...,(i,j)\}).\]
If this sum is empty (which happens when any of $i$, $j$, or $k$ is zero) we treat it as~$1$. From the array $\overline{Y}$ we define a normalized array $\widetilde{Y} = (\widetilde{y}_{i,j,k})$ whose indices run over the same set as the indices of $Y$. For $(i,j) \in \Gamma_{m,n}$, let us define the \emph{rectangular product at $(i,j)$} to be~$\mathrm{rect}(i,j) :=  \prod_{r\le i, s\le j} x_{rs}$. If~$(i,j) \notin \Gamma_{m,n}$ we set~$\mathrm{rect}(i,j) := 1$. Then the entries of $\widetilde{Y}$ are given by $\widetilde{y}_{i,j,k} := \overline{y}_{i,j,k}/\mathrm{rect}(i,j)$. Theorem~\ref{thm:octa} will show that $\widetilde{Y}$ satisfies the octahedron recurrence, but first let us spell out the exact connection with birational RSK.

Greene's theorem says that if $\mathrm{mis}_{\sigma}(k)$ is the maximal size of a union of~$k$ disjoint increasing subsequences in $\sigma \in S_n$, then~$\mathrm{mis}_{\sigma}(k) = \lambda_1 + \ldots + \lambda_k$ where~$\lambda$ is the shape of the output of RSK applied to $\sigma$. Thus in order to obtain the~$\lambda_k$ from sizes of increasing subsequences in $\sigma$, we see~$\lambda_k = \mathrm{mis}_{\sigma}(k) - \mathrm{mis}_{\sigma}(k-1)$. In the birational setting, this means that to move from sums of weights of tuples of noncrossing paths back to RSK, we should take quotients of successive entries. So we define another three-dimensional array \mbox{$Y_{i,j,k} = (y_{i,j,k})$} whose indices run over all $i,j,k \in \mathbb{Z}$ which satisfy $0 \leq k \leq \mathrm{min}(i,j)+1$ and for which there exist~$a,b \in \{0,1,2\}$ such that $(i+a,j+b) \in \Gamma_{m,n}$, with entries $y_{i,j,0} := 1$, $y_{i,j,\mathrm{min}(i,j)+1} := 1$, and~$y_{i,j,k} := \overline{y}_{i,j,k} / {\overline{y}_{i,j,k-1}}$ for $0 < k < \mathrm{min}(i,j)+1$. The map~$X \mapsto Y$ could be called birational RSK. Alternatively, we might want RSK to map $X$ to another $m \times n$ matrix. In that case, define~$Z = (z_{ij})$ to be the~\mbox{$m \times n$} matrix with entries in $\mathbb{R}_{>0}$ as follows: for $(i,j) \in \Gamma_{m,n}$, let us set~$l(i,j) := \mathrm{min}(m-i,n-j)$; then $z_{ij} := y_{i+l(i,j),j+l(i,j),l(i,j)+1}$. In other words, $Z$ is obtained by flattening the outer border of $Y$. Then the map $X \mapsto Z$ is birational RSK as a map between matrices. Note that the array $Y$ can also be computed as follows: the boundary conditions are given by~$y_{i,j,0} = 1$ and~$y_{i,j,\mathrm{min}(i,j)+1} = 1$, and for $1 \leq k \leq \mathrm{min}(i,j)$ we have the recursive formula
\[y_{ijk} = \frac{x_{ijk}(y_{i-1,j,k} + y_{i,j-1,k})}{y_{i-1,j-1,k-1}(\frac{1}{y_{i-1,j,k-1}} + \frac{1}{y_{i,j-1,k-1}}) } \]
where $x_{ijk} := x_{ij}$ if $k = 1$ and $x_{ijk} := 1$ otherwise. We will not prove this recursive formula as it is tangential to our aims, but at any rate it follows immediately from Theorem~\ref{thm:octa} below. For those used to thinking about classical RSK in terms of insertion and bumping it may be rather unclear where this formula comes from. See~\cite{hopkins} for an expository development of the tropicalized version of this formula for classical RSK motivated by certain important properties of RSK such as symmetry with respect to transposition.

\begin{example}
Suppose $X = (x_{ij})_{1\leq i,j \leq 2}$.
Observe that we use standard matrix notation with $x_{11}$ in the upper-left corner. Then the three-dimensional arrays associated to $X$ are
\begin{center}
$\overline{Y} = $\setlength{\tabcolsep}{9pt}
\begin{tabular}{|c | c | c |}
\hline
$\begin{matrix} 1 & 1 & 1 \\ 1 & 1 & 1 \\ 1 & 1 & 1 \end{matrix}$ & $\begin{matrix} \\ x_{11} & x_{11}x_{12} \\  x_{11}x_{21} & x_{11}x_{22}(x_{12}+x_{21}) \end{matrix}$ & $\begin{matrix}  \\ \\ x_{11}x_{12}x_{21}x_{22} \end{matrix}$ \\
$k=0$ & $k=1$ & $k=2$  \\ \hline
\end{tabular} \\ \medskip

$\widetilde{Y}= $\setlength{\tabcolsep}{9pt}
\begin{tabular}{|c | c | c | }
\hline
$\begin{matrix} 1 & 1 & 1 \\ 1 & \frac{1}{x_{11}} & \frac{1}{x_{11}x_{12}} \\ 1 & \frac{1}{x_{11}x_{21}} & \frac{1}{x_{11}x_{12}x_{21}x_{22}} \end{matrix}$ & $\begin{matrix} \\ 1 & 1 \\  1 & \frac{1}{x_{12}} + \frac{1}{x_{21}} \end{matrix}$ & $\begin{matrix}  \\ \\ 1 \end{matrix}$  \\
$k=0$ & $k=1$ & $k=2$ \\ \hline
\end{tabular} \medskip

$Y =$ \setlength{\tabcolsep}{5pt}
\begin{tabular}{|c |c | c | c | }
\hline
$\begin{matrix} 1 & 1 & 1 & 1 \\ 1 & 1 & 1 & 1 \\ 1 & 1 & 1 & 1 \\ 1 & 1 & 1 & 1 \end{matrix}$ & $\begin{matrix} \\ 1 & 1 & 1 \\ 1 & x_{11} & x_{11}x_{12} \\ 1 & x_{11}x_{21} & x_{11}x_{22}(x_{12}+x_{21}) \end{matrix}$ & $\begin{matrix} \\ \\ 1 & 1 \\ 1 & \frac{x_{12}x_{21}}{x_{12}+x_{21} } \end{matrix}$ & $\begin{matrix} \\ \\ \\ 1 \end{matrix}$ \\
$k=0$ & $k=1$ & $k=2$ & $k=3$ \\ \hline
\end{tabular}\\
\end{center}
Here the bottom-right corner of each level in each array is aligned. Also,
\[ Z = \begin{pmatrix}\frac{x_{12}x_{21}}{x_{12}+x_{21} } & x_{11}x_{12} \\ x_{11}x_{21} & x_{11}x_{22}(x_{12}+x_{21})\end{pmatrix}. \]
\end{example}

\begin{thm} \label{thm:octa}
The three-dimensional array $\widetilde{Y} = (\widetilde{y}_{i,j,k})$ can be computed as follows: the boundary conditions are~$\widetilde{y}_{i,j,0} = 1/\mathrm{rect}(i,j) \textrm{ and } \widetilde{y}_{i,j,\mathrm{min}(i,j)} = 1$, and for $1 \leq k \leq \mathrm{min}(i,j) - 1$ we have the recursive formula
\[ \widetilde{y}_{i,j,k} \widetilde{y}_{i-1,j-1,k-1} = \widetilde{y}_{i-1,j,k}\widetilde{y}_{i,j-1,k-1} + \widetilde{y}_{i-1,j,k-1}\widetilde{y}_{i,j-1,k} \]
In other words, $\widetilde{Y}$ satisfies the (bounded) octahedron recurrence.
\end{thm}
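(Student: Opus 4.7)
The plan is to deduce Theorem~\ref{thm:octa} from Corollary~\ref{cor:coroftswap2} applied to a suitable interlacing network built on $\Gamma_{m,n}$. First I would clear the $\mathrm{rect}$-denominators in the recurrence. A direct computation comparing $\prod x_{rs}$ monomials gives
\[ \mathrm{rect}(i,j)\,\mathrm{rect}(i-1,j-1) = x_{ij}\,\mathrm{rect}(i-1,j)\,\mathrm{rect}(i,j-1), \]
so that the octahedron recurrence for $\widetilde{Y}$ is equivalent to the purely polynomial identity
\[ \overline{y}_{i,j,k}\,\overline{y}_{i-1,j-1,k-1} = x_{ij}\bigl(\overline{y}_{i-1,j,k}\,\overline{y}_{i,j-1,k-1} + \overline{y}_{i-1,j,k-1}\,\overline{y}_{i,j-1,k}\bigr). \]

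Next I would fix the point $(i,j,k)$ and build an interlacing network $G$ whose underlying graph is (a subgraph of) $\Gamma_{m,n}$. The sources $S$ are the standard top-left zigzag $s_{2l-1}=(k-l+1,l)$, $s_{2l}=(k-l,l)$, while the sinks $T$ are chosen as a zigzag of $2k-1$ vertices sitting in the rectangle of rows $\{i-1,i\}$ and columns $\{j-k,\ldots,j\}$, arranged so that the odd- and even-indexed sinks correspond (via the interpretations below) to the endpoints of the $k$-tuple and $(k-1)$-tuple appearing on the left-hand side of the displayed identity. I would then check the interlacing hypothesis of Corollary~\ref{cor:coroftswap2}: the $k$-bottleneck can be taken to be the outer anti-diagonal $N=\{s_1,s_3,\ldots,s_{2k-1}\}$ (any path from $s_{2l}$ must begin with a step into $s_{2l-1}$ or $s_{2l+1}$), and a suitable subset of $T$ on row $i-1$ provides the $(k-1)$-sink-bottleneck.

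Third, I would use the Lindstr\"{o}m--Gessel--Viennot lemma together with a forced-segment analysis to match the three pattern weights with the three $\overline{y}$ products. The key observation is that in any noncrossing $k$-tuple from $(1,1),\ldots,(1,k)$, path $l$ is forced to visit the column-$l$ anti-diagonal vertex $(k-l+1,l)$, and similarly for $(k-1)$-tuples and the inner anti-diagonal; this identifies the determinants of the relevant submatrices of path weights with $\overline{y}$ values up to the monomial factors $\prod_l\sqrt{x_{1,l}\,x_{\mathrm{end}_l}}$ that arise in converting $\mathrm{wt}$ to $\widehat{\mathrm{wt}}$. For the choice $I=J=\{2,4,\ldots,2k-2\}$ and its two end-swaps $J',J''$, tracking these normalization monomials together with the forced extensions at the sink end produces a common factor $C$ in all three pattern weights, except that the end-swap patterns $J'$ and $J''$ carry an additional factor of $x_{ij}$ coming from the sink $(i,j)$ being blue in $J$ but red in $J'$ (and $t_1$ being handled symmetrically for $J''$).

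Finally, Corollary~\ref{cor:coroftswap2} gives $\mathrm{wt}(I,J)=\mathrm{wt}(I,J')+\mathrm{wt}(I,J'')$; dividing through by $C$ recovers the reformulated $\overline{y}$-identity, and hence Theorem~\ref{thm:octa}. The main obstacle will be step 3: carefully bookkeeping the various $\sqrt{x_{\cdot\cdot}}$ monomials produced by the LGV conversion and by the source- and sink-end forced segments, and verifying that they all cancel uniformly between the three pattern weights except for the single factor $x_{ij}$ appearing on the right-hand side. Because LGV only pins down the pattern weights up to such monomial factors, confirming this cancellation is the technical heart of the argument.
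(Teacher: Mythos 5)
Your high-level plan matches the paper's: reformulate the $\widetilde{Y}$-recurrence as the polynomial identity $\overline{y}_{i,j,k}\overline{y}_{i-1,j-1,k-1} = x_{ij}(\overline{y}_{i-1,j,k}\overline{y}_{i,j-1,k-1} + \overline{y}_{i-1,j,k-1}\overline{y}_{i,j-1,k})$, then realize the three products as the three pattern weights $\mathrm{wt}(I,J)$, $\mathrm{wt}(I,J')$, $\mathrm{wt}(I,J'')$ of an interlacing network, and finally invoke Corollary~\ref{cor:coroftswap2}. However, your proposed network is not actually a network in the sense of~\S\ref{sec:networks}, and this is a genuine gap. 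You place the $2k-1$ sinks as a zigzag inside the two rows $\{i-1,i\}$ of $\Gamma_{m,n}$, with odd-indexed sinks at $(i,j-k+1),\ldots,(i,j)$ and even-indexed sinks at $(i-1,j-k+1),\ldots,(i-1,j-1)$. These vertices are mutually adjacent in a ``ladder'' pattern: $(i-1,c)$ has grid edges to $(i,c)$ and to $(i-1,c+1)$, both of which are also sinks. Consequently there is no planar disc containing the relevant subgraph whose boundary passes through $t_{2k-1},\ldots,t_1$ in order without crossing graph edges, so the sources-on-boundary/sinks-on-boundary hypothesis of the sink-swapping involution fails. (Relatedly, your proposed $(k-1)$-sink-bottleneck $N_T$ = ``the sinks on row $i-1$'' does not work: a path to an odd-indexed sink in row $i$ can descend into row $i$ at some column $\leq j-k$ and then travel rightward, avoiding all of $N_T$.)

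The fix — which is exactly what the paper does — is to take the interlacing network to be $\Gamma_{i,j}^{k}$, whose sinks lie on the \emph{anti-diagonal} zigzag near $(i,j)$ and hence genuinely sit on the boundary of the disc, and then define explicit bijections $\varphi,\varphi',\varphi''$ that vertically extend paths from the anti-diagonal sinks to the row-$i$ and row-$(i-1)$ endpoints demanded by $\mathrm{RSKPath}$. You actually gesture at this when you say ``forced extensions at the sink end,'' but that phrase contradicts your stated sink placement: if the sinks already sit in rows $\{i-1,i\}$ there is nothing to extend. Once the network is corrected, the rest of your outline — the $k$-bottleneck $\{s_1,s_3,\ldots,s_{2k-1}\}$, the choice $I=J=\{2,4,\ldots,2k-2\}$ with end-swaps $J',J''$, and the monomial bookkeeping that produces the common factor and the extra $x_{ij}$ — is indeed the shape of the paper's argument, and you correctly identify the monomial bookkeeping (the paper carries it out via the triangular products $\mathrm{tri}^{\pm}$) as the technical core. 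As written, though, the network-construction step does not go through.
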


\begin{proof} Of course~$\overline{y}_{i,j,0} = 1$ and~$\overline{y}_{i,j,\mathrm{min}(i,j)} = \mathrm{rect}(i,j)$ are equivalent boundary conditions.  We have~$\overline{y}_{i,j,0} = 1$ by definition. We have~$\overline{y}_{i,j,\mathrm{min}(i,j)} = \mathrm{rect}(i,j)$ because there is a single tuple of paths in~$\mathrm{RSKPath}(i,j,,\mathrm{min}(i,j))$ and it covers exactly those vertices in $\Gamma_{m,n}$ that are less than or equal to $(i,j)$.

Now let $1 \leq k \leq \mathrm{min}(i,j) - 1$. The key to proving the recursive condition is to show that
\[ \overline{y}_{ijk}\overline{y}_{i-1,j-1,k-1} = (\overline{y}_{i-1,j,k}\overline{y}_{i,j-1,k-1} + \overline{y}_{i-1,j,k-1}\overline{y}_{i,j-1,k})x_{ij}. \tag{*} \label{eqn:overu} \]
For $k=1$, we have $\overline{y}_{i-1,j-1,k-1} = \overline{y}_{i,j-1,k-1} =  \overline{y}_{i-1,j,k-1} = 1$ and~\eqref{eqn:overu} follows from the fact that every path connecting $(1,1)$ to $(i,j)$ goes through exactly one of~$(i-1,j)$ or $(i,j-1)$, and conversely any path to either $(i-1,j)$ or~$(i,j-1)$ can be uniquely extended to a path to $(i,j)$. Assume $k \geq 2$. For $(i,j) \in \Gamma_{m,n}$, define the \emph{increasing} and \emph{decreasing triangular products} of length $l$ at~$(i,j)$ as
\[
\mathrm{tri}^{+}(i,j,l) := \prod_{r = i}^{i+l-1} \; \prod_{s = j}^{j+i+l-r-1}x_{rs} \text{\ and\ }
\mathrm{tri}^{-}(i,j,l) :=  \prod_{r = i-l+1}^{i} \; \prod_{s = j+i-l-r+1}^{j}x_{rs}.
\]
The first equation makes sense for $1\leq l \leq \min(m-i+1,n-j+1)$, and the second equation makes sense for $1\leq l\leq \min(i,j)$. Consider the network $\Gamma_{i,j}^{k}$ with edge-weight function $\omega$ the same as for $\Gamma_{m,n}$ above. Set $I,J := \{2,4,\ldots,2k-2\}$ and~$\kappa := \mathrm{tri}^{+}(1,1,k-2) \cdot  \mathrm{tri}^{+}(1,1,k-1)$. Then there is a bijection
\[ \varphi\colon \mathrm{PNCPath}_{\Gamma_{i,j}^{k}}(I,J) \to \mathrm{RSKPath}(i-1,j-1,k-1) \times \mathrm{RSKPath}(i,j,k)\]
such that
\[ \widehat{\mathrm{wt}}(R,B) \cdot \kappa \cdot  \mathrm{tri}^{-}(i-1,j-1,k-2) \cdot \mathrm{tri}^{-}(i,j,k-1) = \widehat{\mathrm{wt}}(\varphi(R,B)).\]
Specifically, if $(R,B) = ( (r_1,\ldots,r_{k-1}), (b_1,\ldots,b_k) ) \in  \mathrm{PNCPath}_{\Gamma_{i,j}^{k}}(I,J)$ then we define $\varphi(R,B) := ( (\pi_1,\ldots,\pi_{k-1}), (\sigma_1,\ldots,\sigma_{k}))$ where
\begin{align*}
\pi_s &:= \{(t,s)\}_{t=1}^{k-1-s} \cdot r_s \cdot \{(i-s+t,j-k+s)\}_{t=1}^{s-1} \\
\sigma_s &:=  \{(t,s)\}_{t=1}^{k-s} \cdot b_s \cdot \{(i-s+t,j-k+s)\}_{t=2}^{s}.
\end{align*}
(Here $\cdot$ denotes concatenation of sequences.) In other words, $\varphi$ extends the paths vertically to connect to the appropriate start and end points for paths in~$\mathrm{RSKPath}(i-1,j-1,k-1)$ and $ \mathrm{RSKPath}(i,j,k)$; there is a unique way to do this. Similarly, if we set $J' := \{1,3,\ldots,2k-3\}$ then there is a bijection
\[ \varphi'\colon \mathrm{PNCPath}_{\Gamma_{i,j}^{k}}(I,J') \to \mathrm{RSKPath}(i,j-1,k-1) \times \mathrm{RSKPath}(i-1,j,k)\]
such that
\[ \frac{\widehat{\mathrm{wt}}(R',B')}{x_{i,j-k+1}} \cdot \kappa \cdot \mathrm{tri}^{-}(i,j-1,k-2) \cdot \mathrm{tri}^{-}(i-1,j,k-1) = \widehat{\mathrm{wt}}(\varphi'(R',B')).\]
Here for $(R',B') = ( (r'_1,\ldots,r'_{k-1}), (b'_1,\ldots,b'_k) ) \in  \mathrm{PNCPath}_{\Gamma_{i,j}^{k}}(I,J')$ we define~$\varphi'(R',B') := ( (\pi'_1,\ldots,\pi'_{k-1}), (\sigma'_1,\ldots,\sigma'_{k}) )$ where
\begin{align*}
\pi'_s &:= \{(t,s)\}_{t=1}^{k-1-s} \cdot r'_s \cdot \{(i-s+t,j-k+s)\}_{t=2}^{s} \\
\sigma'_s &:=  \{(t,s)\}_{t=1}^{k-s} \cdot b'_s \cdot \{(i-s+t,j-k+s)\}_{t=1}^{s-1}.
\end{align*}
Again, $\varphi'$ just extends paths vertically. And if we set $J'' := \{3,5,\ldots,2k-1\}$ then there is a bijection
\[ \varphi''\colon \mathrm{PNCPath}_{\Gamma_{i,j}^{k}}(I,J'') \to \mathrm{RSKPath}(i-1,j,k-1) \times \mathrm{RSKPath}(i,j-1,k)\]such that
\[ \frac{\widehat{\mathrm{wt}}(R'',B'')}{x_{i-k+1,j}} \cdot \kappa \cdot  \mathrm{tri}^{-}(i-1,j,k-2) \cdot \mathrm{tri}^{-}(i,j-1,k-1) = \widehat{\mathrm{wt}}(\varphi''(R'',B'')).\]
Here for $(R'',B'') = ( (r''_1,\ldots,r''_{k-1}), (b''_1,\ldots,b''_k) ) \in  \mathrm{PNCPath}_{\Gamma_{i,j}^{k}}(I,J'')$ such that $b''_1 = \{v_t\}_{t=0}^{l}$ we define~$\varphi''(R'',B'') := ( (\pi''_1,\ldots,\pi''_{k-1}), (\sigma''_1,\ldots,\sigma''_{k}) )$ where
\begin{align*}
\pi_s &:= \{(t,s)\}_{t=1}^{k-1-s} \cdot r_s \cdot \{(i-s+t,j-k+s+1)\}_{t=1}^{s-1} \\
\sigma_s &:=  \begin{cases} \{(t,1)\}_{t=1}^{k-1} \cdot \{v_t\}_{t=0}^{l-1} & \textrm{if $s=1$} \\ \{(t,s)\}_{t=1}^{k-s} \cdot b_s \cdot \{(i-s+t,j-k+s-1)\}_{t=2}^{s} & \textrm{otherwise}. \end{cases}
\end{align*}
Now $\varphi''$ has to slide the end point of $b_1$ to the left, but the other paths it again just extends vertically. Corollary~\ref{cor:coroftswap2} tells us that
\[ \sum_{(R,B) \in  \mathrm{PNCPath}(I,J)} \hspace{0pt minus 1fil} \widehat{\mathrm{wt}}(R,B) = \hspace{0pt minus 1fil} \sum_{(R',B') \in  \mathrm{PNCPath}(I,J')} \hspace{0pt minus 1fil} \widehat{\mathrm{wt}}(R',B')  +  \hspace{0pt minus 1fil} \sum_{(R'',B'') \in \mathrm{PNCPath}(I,J'')} \hspace{0pt minus 1fil} \widehat{\mathrm{wt}}(R'',B'')\]
and together with
\begin{align*}
x_{ij} &= \frac{ \mathrm{tri}^{-}(i-1,j-1,k-2) \cdot \mathrm{tri}^{-}(i,j,k-1) \cdot x_{i,j-k+1}}{  \mathrm{tri}^{-}(i-1,j,k-2) \cdot \mathrm{tri}^{-}(i,j-1,k-1)}  \\
&= \frac{ \mathrm{tri}^{-}(i-1,j-1,k-2) \cdot \mathrm{tri}^{-}(i,j,k-1) \cdot x_{i-k+1,j} }{  \mathrm{tri}^{-}(i,j-1,k-2) \cdot \mathrm{tri}^{-}(i-1,j,k-1)}
\end{align*}
we conclude that indeed equation~\eqref{eqn:overu} holds. To finish, we compute
\begin{align*}
\widetilde{y}_{ijk} = \frac{\overline{y}_{ijk}}{\mathrm{rect}(i,j)}
&= \frac{x_{ij}(\overline{y}_{i-1,j,k}\overline{y}_{i,j-1,k-1}  + \overline{y}_{i,j-1,k} \overline{y}_{i-1,j,k-1} )}{\mathrm{rect}(i,j)\cdot \overline{y}_{i-1,j-1,k-1}}\\
&= \frac{\mathrm{rect}(i-1,j-1) \cdot (\overline{y}_{i-1,j,k}\overline{y}_{i,j-1,k-1}  + \overline{y}_{i,j-1,k} \overline{y}_{i-1,j,k-1} )}{\mathrm{rect}(i-1,j)\cdot \mathrm{rect}(i,j-1)\cdot \overline{y}_{i-1,j-1,k-1}}\\
&= \frac{\widetilde{y}_{i-1,j,k}\widetilde{y}_{i,j-1,k-1}  + \widetilde{y}_{i,j-1,k} \widetilde{y}_{i-1,j,k-1}}{\widetilde{y}_{i-1,j-1,k-1}}.
\end{align*}
Thus, $\widetilde{Y}$ satisfies the octahedron recurrence~\cite{speyer}~\cite{henriques}. \end{proof}

\section{Schur functions and Schur positivity} \label{sec:schur}

In this section we apply our network path weight relations to the problem of finding identities for products of Schur functions. The identities we obtain are reminiscent of those obtained by Fulmek and Kleber~\cite{fulmek}, who also used path swapping. We then apply the identities to demonstrate Schur positivity for certain expressions involving products of Schur functions. Here we assume familiarity with partitions, Young tableaux, and the ring of symmetric functions. A reference is Stanley~\cite[\S7]{stanley2} and we will generally follow Stanley's notation. One notational remark is that we use~$c^r$ to denote the rectangular partition with~$r$ rows of length $c$, and we also sometimes write expressions like~$(c_1^{r_1},c_2^{r_2},c_3^{r_3})$ to denote the partition with~$r_1$ rows equal to $c_1$, $r_2$ rows equal to~$c_2$, and $r_3$ rows equal to $c_3$, where~$c_1 > c_2 > c_3$. For $T$ a semistandard Young tableau (SSYT) let us define the monomial $x^T := \prod_{i=1}^{\infty} x_i^{m(i,T)}$ where $m(i,T)$ is the number of entries equal to $i$ in $T$. Recall that the Schur function $s_\lambda$ can be defined combinatorially by~$s_\lambda(x) := \sum_{T} x^T$ where the sum is over all SSYTs~$T$ of shape~$\lambda$. We will need various specializations of Schur functions to state our results. We define $s_\lambda^{X}$, where $X\subseteq \mathbb{Z}_{>0}$, to be $s_\lambda(x_1,x_2,...)$ with specializations~$x_i=0$ for~$i\notin X$. Let us call an SSYT whose entries are among~$[n]$ an \emph{$n$-tableau}. Then it is clear from the combinatorial definition of Schur functions that $s^{[n]}_{\lambda} = \sum_T x^T$ where the sum is over all $n$-tableaux $T$ of shape $\lambda$.

We now recall an equivalent definition of Schur functions in terms of nonintersecting paths. Already Gessel and Viennot~\cite{gessel} were aware of the connection between tableaux and nonintersecting lattice paths in $\mathbb{Z}^2$.  Let us make~$\mathbb{Z}^2$ into a graph with horizontal edges $((i,j),(i-1,j))$ and vertical edges  $((i,j),(i,j-1))$. We now use Cartesian coordinates for $\mathbb{Z}^2$ so~$(-\infty,-\infty)$ will be in the bottom-left corner. Although $\mathbb{Z}^2$ is infinite, this is no problem for us as we will only ever use a finite portion of it. We set the edge-weight function $\omega$ of $\mathbb{Z}^2$ to be~$\omega((i,j), (i-1,j)) := x_j$ for horizontal edges and $\omega((i,j), (i,j-1)) := 1$ for vertical edges. Let~$\lambda = (\lambda_1,\ldots,\lambda_k)$ be a partition. For $n \geq 1$ let
\[\mathrm{SPath}(\lambda,n) := \mathrm{NCPath}_{\mathbb{Z}^2}(\{(\lambda_{k+1-i}+i,n)\}_{i=1}^{k},\{(i,1)\}_{i=1}^{k}).\]
Then $s^{[n]}_\lambda = \sum_{\Pi \in\mathrm{SPath}(\lambda,n) } \mathrm{wt}(\Pi)$, which follows from a simple bijection between $n$-tableaux of shape $\lambda$ and paths in $\mathrm{SPath}(\lambda,n)$ (see~\cite[Theorem 7.16.1]{stanley2}). In fact, we obtain the following by translation:

\begin{prop} \label{prop:schurpaths}
For $a,b,c \in \mathbb{Z}$ with $1 \leq a \leq b$ let
\begin{align*}
\mathrm{SPath}^c(\lambda,a,b) &:= \mathrm{NCPath}_{\mathbb{Z}^2}(\{(\lambda_{k+1-i}+i+c,b)\}_{i=1}^{k},\{(i+c,a)\}_{i=1}^{k}).
\end{align*}
Then $s^{[a,b]}_\lambda = \sum_{\Pi \in\mathrm{SPath}^c(\lambda,a,b) } \mathrm{wt}(\Pi)$.
\end{prop}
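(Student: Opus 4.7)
The plan is to reduce to the base case $c = 0$, $a = 1$, $b = n$, which is exactly the identity $s^{[n]}_\lambda = \sum_{\Pi \in \mathrm{SPath}(\lambda, n)} \mathrm{wt}(\Pi)$ already recalled from Stanley's \cite[Theorem~7.16.1]{stanley2} in the paragraph immediately preceding the proposition. The key observation is that the edge-weight function on $\mathbb{Z}^2$ depends only on the $y$-coordinate: every horizontal edge $((i,j),(i-1,j))$ has weight $x_j$ regardless of $i$, and every vertical edge has weight $1$.

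First I would handle the horizontal translation. The map $\Phi_c\colon(i,j) \mapsto (i+c, j)$ is a graph automorphism of $\mathbb{Z}^2$, and by the observation above it is weight-preserving on edges, hence on paths and on tuples of paths. It sends the ordered source tuple $(\lambda_{k+1-i}+i, b)_{i=1}^{k}$ to $(\lambda_{k+1-i}+i+c, b)_{i=1}^{k}$ and the sink tuple $(i,a)_{i=1}^{k}$ to $(i+c,a)_{i=1}^{k}$, so it induces a weight-preserving bijection $\mathrm{SPath}^0(\lambda,a,b) \to \mathrm{SPath}^c(\lambda,a,b)$. This reduces the claim to showing $s^{[a,b]}_\lambda = \sum_{\Pi \in \mathrm{SPath}^0(\lambda,a,b)} \mathrm{wt}(\Pi)$.

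Next I would invoke the standard bijection from \cite[Theorem~7.16.1]{stanley2} between SSYTs of shape $\lambda$ with entries in $[b]$ and elements of $\mathrm{SPath}(\lambda, b) = \mathrm{SPath}^0(\lambda, 1, b)$, under which an SSYT $T$ is sent to a tuple of paths whose number of horizontal edges at height $j$ equals the number of entries equal to $j$ in $T$. I claim this bijection restricts to one between SSYTs with entries in $[a,b]$ and elements of $\mathrm{SPath}^0(\lambda,a,b)$. Indeed, every path in $\mathrm{SPath}^0(\lambda,a,b)$ has both its start point and its end point at height in $[a,b]$, and since all edges of $\mathbb{Z}^2$ point either leftward or downward, such a path can only visit vertices at height $\geq a$ and so uses no horizontal edge at height $< a$; on the tableau side this is precisely the condition that no entry is $< a$. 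Since the weight of a horizontal edge at height $j$ is $x_j$ and vertical edges contribute $1$, the bijection is weight-preserving and the identity follows by the combinatorial definition of the Schur function specialization.

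There is no real obstacle here: the statement is just a horizontal translation plus a vertical restriction of the standard path interpretation of Schur polynomials, and both operations interact trivially with the weight function because weights depend only on heights. The only point that warrants a line of justification (above) is the assertion that paths confined between heights $a$ and $b$ at their endpoints must in fact stay in the strip $[a,b]$, which is immediate from the direction of the edges.
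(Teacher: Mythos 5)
Your proof is correct and takes the same approach as the paper, which offers essentially no details and simply says the proposition is obtained ``by translation'' from the base case $s^{[n]}_\lambda = \sum_{\Pi \in \mathrm{SPath}(\lambda,n)} \mathrm{wt}(\Pi)$; you supply the two ingredients (weight-preserving horizontal shift, plus the observation that downward/leftward paths between heights $b$ and $a$ stay in the strip) cleanly. One tiny imprecision: the standard bijection does not literally ``restrict'' to $\mathrm{SPath}^0(\lambda,a,b)$ since those paths end at height $a$ rather than at height $1$, but the missing portion from height $a$ down to $1$ is a forced vertical tail of weight $1$, so truncating it is a weight-preserving bijection and your subsequent reasoning makes the intended argument clear.
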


Our main result in this section is the following identity of Schur functions:
\begin{thm}\label{thm:schuridentity}
Let $\lambda=(\lambda_1,\lambda_2,\ldots,\lambda_k)$ and $0 \leq t \leq k-1$. Then
\begin{align*}
s_{\lambda} s^{[2,\infty)}_{(\lambda_1,\ldots,\lambda_t,\lambda_{t+2}-1,\ldots,\lambda_k-1)} = & s^{[2,\infty)}_{\lambda} s_{(\lambda_1,\ldots,\lambda_t,\lambda_{t+2}-1,\ldots,\lambda_k-1)} \\
&+x_1s_{(\lambda_1-1,\ldots,\lambda_k-1)}s^{[2,\infty)}_{(\lambda_1+1,\ldots,\lambda_t+1,\lambda_{t+2},\ldots,\lambda_k)}.
\end{align*}
\end{thm}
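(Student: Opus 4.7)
The plan is to apply Corollary~\ref{cor:coroftswap2} to a carefully chosen interlacing network $G$ whose pattern weights realize the three Schur-function products appearing in the identity.

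First, I would use Proposition~\ref{prop:schurpaths} to interpret each Schur function as a generating function for tuples of noncrossing lattice paths in $\mathbb{Z}^2$; the $[2,\infty)$-specialization corresponds to tuples confined to rows $\geq 2$, since setting $x_1=0$ kills every horizontal edge at row~$1$. I would then build an interlacing network $G$ embedded in the plane (with underlying graph a portion of $\mathbb{Z}^2$, augmented by weight-$1$ routing edges as needed to bring all sinks to the boundary of the containing disc) whose $2k-1$ sources and $2k-1$ sinks alternate on the boundary: the odd-indexed sinks sit at row~$1$ and encode the ends of $\lambda$-paths, while the even-indexed sinks sit at row~$2$ and encode the ends of $\mu$-paths (and the sources alternate analogously at the top). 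The source and sink positions are chosen so that the path interpretations of $s_\lambda$ and $s^{[2,\infty)}_\mu$ are realized exactly by the $\lambda$- and $\mu$-subtuples.

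With source pattern $I = \{2,4,\ldots,2k-2\}$ and sink pattern $J = \{2,4,\ldots,2k-2\}$, the red paths run from $\mu$-sources to $\mu$-sinks (contributing $s^{[2,\infty)}_\mu$) and the blue paths from $\lambda$-sources to $\lambda$-sinks (contributing $s_\lambda$), giving $\mathrm{wt}(I,J) = s_\lambda\, s^{[2,\infty)}_\mu$. After verifying that $G$ is interlacing---the $k$-bottleneck coming from a row of $k$ vertices all source-sink paths cross, and the $(k-1)$-sink-bottleneck from a set of $k-1$ ``funnel'' vertices through which every path ending at a non-extreme sink must pass---Corollary~\ref{cor:coroftswap2} yields
\[\mathrm{wt}(I,J) = \mathrm{wt}(I,J') + \mathrm{wt}(I,J'')\]
for $J' = \{3,5,\ldots,2k-1\}$ and $J'' = \{1,3,\ldots,2k-3\}$. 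The remaining step is to identify $\mathrm{wt}(I,J')$ with $s^{[2,\infty)}_\lambda\, s_\mu$ (the pivot-$1$ swap reassigns the leftmost row-$1$ sink from the $\lambda$-system to the $\mu$-system, effectively transferring the row-$1$ avoidance from the $\mu$-factor to the $\lambda$-factor after re-reading the resulting noncrossing tuples as tableaux) and $\mathrm{wt}(I,J'')$ with $x_1\, s_{\lambda^-}\, s^{[2,\infty)}_{\mu^+}$ (the pivot-$(2k-1)$ swap causes one path to newly cross a horizontal edge at row~$1$, contributing the factor $x_1$, while the remaining path shapes shift to $\lambda^-$ on the blue side and $\mu^+$ on the red side).

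The main obstacle is this last identification, in particular the shape bookkeeping for the $\mathrm{wt}(I,J'')$ term: one must show that the shapes $\lambda^-$ and $\mu^+$ emerge with the correct cutoff at index $t+1$, which combinatorially is the unique position where the sink-swapping involution $\tau$ forces the new horizontal step of the red $\mu^+$-tuple to appear. This requires carefully tracking---via the Lindstr\"{o}m--Gessel--Viennot correspondence between noncrossing path tuples and tableaux---how source-$x$-coordinates, sink positions, and horizontal-step counts conspire to produce precisely the stated partition shapes and the lone~$x_1$ factor.
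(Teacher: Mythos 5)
Your high-level framework matches the paper's exactly: construct an interlacing network $G$ inside $\mathbb{Z}^2$ so that, with $I = J = \{2,4,\ldots,2k-2\}$, the pattern weight $\mathrm{wt}(I,J)$ equals $s^{[n]}_\lambda\, s^{[2,n]}_\mu$, verify that $G$ is interlacing, apply Corollary~\ref{cor:coroftswap2} to split $\mathrm{wt}(I,J) = \mathrm{wt}(I,J') + \mathrm{wt}(I,J'')$, and identify the two new pattern weights with the remaining two Schur products before sending $n\to\infty$. Your target identifications of $\mathrm{wt}(I,J')$ and $\mathrm{wt}(I,J'')$ are also the correct ones.

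The gap lies in the structural detail you leave unspecified and, where you gesture at it, misattribute. The cutoff at index $t+1$ in $\mu$ and $\mu^+$ is \emph{not} something the involution $\tau$ "forces"; it is built into the source positions before $\tau$ is ever applied, and $\tau$ never alters the sources. In the paper's construction all $2k-1$ sources are drawn from the single set $\{v_1,\ldots,v_k\}$ with $v_i := (\lambda_{k+1-i}+i,n)$, each listed twice except $v_{k-t}$, which appears only once; under $I=\{2,4,\ldots,2k-2\}$ the odd sources are $v_1,\ldots,v_k$ (the $\lambda$-tuple) while the even sources are $v_1,\ldots,v_{k-t-1},v_{k-t+1},\ldots,v_k$ (the $\mu$-tuple), and that single omission is what produces the break at index $t$. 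The sinks, by contrast, lie on a fixed staircase $((1,k),(2,k),(2,k-1),\ldots,(k,2),(k,1))$ that does not depend on $t$; after the vertical extensions down to the rows required by $\mathrm{SPath}^c$, a sink-pattern swap just shifts the $x$-coordinates of the sinks assigned to each color class by one unit, turning $\mu$ into $\mu^+$ and $\lambda$ into $\lambda^-$, with a single horizontal edge at row~$1$ on the rightmost path accounting for the lone $x_1$. Your phrase that the "sources alternate analogously" misses this doubled-with-one-omission structure, which is precisely what makes the source $x$-coordinates agree with the prescribed partitions and simultaneously furnishes the bottleneck sets $N=\{s_1,s_3,\ldots,s_{2k-1}\}$ and $N_T=\{t_2,\ldots,t_{2k-2}\}$ certifying that $G$ is interlacing. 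Without it, the network is not pinned down, the interlacing hypothesis of Corollary~\ref{cor:coroftswap2} remains unverified, and the "careful tracking" you defer is exactly the substance of the proof.
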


\begin{proof} In order to prove this identity we use an interlacing network~$G$. Fix some~\mbox{$n \geq k$}. For $i \in [k]$ define $v_i := (\lambda_{k+1-i}+i,n) \in \mathbb{Z}^2$. Define $G$ to be the network whose underlying graph is the subgraph of $\mathbb{Z}^2$ with vertices in the rectangle between $(1,1)$ and~$(\lambda_1 + k,n)$ and with sources
\[ S = (s_1,\ldots,s_{2k-1}) := ( v_1,v_1,v_2,v_2,\ldots,v_{k-t},\overline{v_{k-t}},\ldots,v_{k},v_{k})\]
(where the overline denotes omission) and sinks
\[ T = (t_1,\ldots,t_{2k-1}) := ( (1,k), (2,k), (2,k-1), (3,k-1),\ldots, (k,2),(k,1)). \]
To witness that $G$ is interlacing we may take $N = \{s_1,s_3,\ldots,s_{2k-1}\}$ as a~$k$-bottleneck and $N_T = \{t_2,t_4,\ldots,t_{2k-2}\}$ as a $(k-1)$-sink bottleneck. Figure~\ref{fig:schur2} illustrates $G$ together with an element of $\mathrm{PNCPath}(G)$ for some specific parameters $\lambda$, $t$ and $n$.

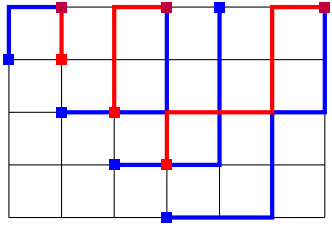
\begin{figure}
\begin{center}
\begin{tikzpicture}[scale=0.7]
\foreach \x in {1,...,5}{
	\draw[thin] (1,\x) -- (7,\x);
}
\foreach \y in {1,...,7}{
	\draw[thin] (\y,1) -- (\y,5);
}
\draw[color=blue,ultra thick] (1,4) -- (1,5) -- (2,5);
\node[rectangle,fill=blue,inner sep=2pt] at (1,4) {};
\draw[color=blue,ultra thick] (2,3) -- (4,3) -- (4,5);
\node[rectangle,fill=blue,inner sep=2pt] at (2,3) {};
\draw[color=blue,ultra thick] (3,2) -- (5,2) -- (5,5);
\node[rectangle,fill=blue,inner sep=2pt] at (3,2) {};
\node[rectangle,fill=blue,inner sep=2pt] at (5,5) {};
\draw[color=blue,ultra thick] (4,1) -- (6,1) -- (6,3) -- (7,3) -- (7,5);
\node[rectangle,fill=blue,inner sep=2pt] at (4,1) {};
\draw[color=red,ultra thick] (2,4) -- (2,5);
\node[rectangle,fill=red,inner sep=2pt] at (2,4) {};
\node[rectangle,fill=purple,inner sep=2pt] at (2,5) {};
\draw[color=red,ultra thick] (3,3) -- (3,5) -- (4,5);
\node[rectangle,fill=red,inner sep=2pt] at (3,3) {};
\node[rectangle,fill=purple,inner sep=2pt] at (4,5) {};
\draw[color=red,ultra thick] (4,2) -- (4,3) -- (6,3) -- (6,5) -- (7,5);
\node[rectangle,fill=red,inner sep=2pt] at (4,2) {};
\node[rectangle,fill=purple,inner sep=2pt] at (7,5) {};
\end{tikzpicture}
\end{center}
\caption{ For $\lambda = (3,2,2,1)$, $t =1$, and $n=5$: the network~$G$ and an element of~$\mathrm{PNCPath}_G(\{2,4,6\},\{2,4,6\})$.} \label{fig:schur2}
\end{figure}

To simplify notation, set
\begin{align*}
\mu &:= (\lambda_1,\ldots,\lambda_t,\lambda_{t+2}-1,\ldots,\lambda_k-1) \\
\nu &:= (\lambda_1+1,\ldots,\lambda_t+1,\lambda_{t+2},\ldots,\lambda_k) \\
\rho &:= (\lambda_1-1,\ldots,\lambda_k-1).
\end{align*}
Let $I,J := \{2,4,\ldots,2k-2\}$ and $J' := [1,2k-2] \setminus J$ and $J'' := [2,2k-1] \setminus J$. Then there are bijections
\begin{align*}
\varphi\colon \mathrm{PNCPath}_G(I,J) &\to  \mathrm{SPath}^1(\mu,2,n) \times \mathrm{SPath}^0(\lambda,1,n) \\
\varphi'\colon \mathrm{PNCPath}_G(I,J') &\to \mathrm{SPath}^0(\nu,2,n) \times \mathrm{SPath}^1(\rho,1,n) \\
\varphi''\colon \mathrm{PNCPath}_G(I,J'') &\to \mathrm{SPath}^0(\mu,1,n) \times \mathrm{SPath}^0(\lambda,2,n)
\end{align*}
such that
\begin{align*}
\mathrm{wt}(R,B) &= \mathrm{wt}(\varphi(R,B)) \\
\mathrm{wt}(R',B') &= x_1\cdot \mathrm{wt}(\varphi'(R',B'))\\
\mathrm{wt}(R'',B'') &= \mathrm{wt}(\varphi''(R'',B''))
\end{align*}
for all appropriate $(R,B), (R',B'), (R'',B'') \in\mathrm{PNCPath}(G)$. These bijections have a very similar description to those in the proof of Theorem~\ref{thm:octa}: the maps~$\varphi$ and $\varphi''$ merely extend the paths vertically to reach the necessary start and end points; $\varphi'$ also just extends paths vertically, except for $b'_{2k-1}$ (the rightmost blue path) which it moves to the right, thus accounting for the factor of $x_1$. Corollary~\ref{cor:coroftswap2} tells us that
\[ \sum_{(R,B) \in  \mathrm{PNCPath}(I,J)} \hspace{0pt minus 1fil} \widehat{\mathrm{wt}}(R,B) = \hspace{0pt minus 1fil} \sum_{(R',B') \in  \mathrm{PNCPath}(I,J')} \hspace{0pt minus 1fil} \widehat{\mathrm{wt}}(R',B')  +  \hspace{0pt minus 1fil} \sum_{(R'',B'') \in \mathrm{PNCPath}(I,J'')} \hspace{0pt minus 1fil} \widehat{\mathrm{wt}}(R'',B'')\]
and together with Proposition~\ref{prop:schurpaths} we conclude $s^{[2,n]}_{\mu}s^{[n]}_{\lambda} = x_1s^{[2,n]}_{\nu}s^{[n]}_{\rho} + s^{[n]}_{\mu}s^{[2,n]}_{\lambda}$. Taking the limit~$n \to \infty$ gives us the result. \end{proof}

By taking $t=k-1$ in Theorem~\ref{thm:schuridentity}, we get the following corollaries.

\begin{cor}\label{cor:schuridentityspecialcase}
For $\lambda = (\lambda_1,\ldots,\lambda_k)$,
\[s_{\lambda} s^{[2,\infty)}_{(\lambda_1,\ldots,\lambda_{k-1})} = s_{\lambda}^{[2,\infty)}s_{(\lambda_1,\ldots,\lambda_{k-1})}+ x_1s_{(\lambda_1-1,\ldots,\lambda_k-1)} s_{(\lambda_1+1,\ldots,\lambda_{k-1}+1)}^{[2,\infty)}.\]
\end{cor}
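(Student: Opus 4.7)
The plan is to obtain this corollary by direct specialization of Theorem~\ref{thm:schuridentity}: simply set $t = k-1$ and verify that the resulting partitions on both sides coincide with those in the corollary statement. No new combinatorial or algebraic work is required, so there is no real obstacle; the only task is a careful bookkeeping of indices.

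First I would substitute $t = k-1$ into the two partitions that depend on $t$. In the partition $(\lambda_1,\ldots,\lambda_t,\lambda_{t+2}-1,\ldots,\lambda_k-1)$, the second block of entries runs from index $t+2 = k+1$ down to $k$, which is an empty range, so the partition collapses to $(\lambda_1,\ldots,\lambda_{k-1})$. Analogously, the partition $(\lambda_1+1,\ldots,\lambda_t+1,\lambda_{t+2},\ldots,\lambda_k)$ collapses to $(\lambda_1+1,\ldots,\lambda_{k-1}+1)$ for the same reason. The third partition appearing in Theorem~\ref{thm:schuridentity}, namely $(\lambda_1-1,\ldots,\lambda_k-1)$, does not involve $t$ and is unchanged.

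Finally I would read off the specialized identity
\[ s_{\lambda}\, s^{[2,\infty)}_{(\lambda_1,\ldots,\lambda_{k-1})} \;=\; s^{[2,\infty)}_{\lambda}\, s_{(\lambda_1,\ldots,\lambda_{k-1})} \;+\; x_1\, s_{(\lambda_1-1,\ldots,\lambda_k-1)}\, s^{[2,\infty)}_{(\lambda_1+1,\ldots,\lambda_{k-1}+1)}, \]
which is exactly the statement of Corollary~\ref{cor:schuridentityspecialcase}. The only small point worth checking is that the hypothesis $0 \leq t \leq k-1$ of Theorem~\ref{thm:schuridentity} allows the choice $t = k-1$, which it does.
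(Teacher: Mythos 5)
Your proposal is correct and matches the paper's approach exactly: the paper obtains this corollary (and the following one) precisely by setting $t = k-1$ in Theorem~\ref{thm:schuridentity}. Your index bookkeeping showing that the second blocks become empty is the only verification required.
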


\begin{cor}\label{cor:rectengelschuridentity}
For any $c, r \geq 1$, $s_{c^r}s_{c^{r-1}}^{[2,\infty)} = s_{c^{r-1}}s_{c^r}^{[2,\infty)} + x_1s_{(c-1)^r}s_{(c+1)^{r-1}}^{[2,\infty)}$.
\end{cor}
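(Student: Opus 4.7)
The plan is nothing more than a direct specialization of Corollary~\ref{cor:schuridentityspecialcase}. I would set $k := r$ and $\lambda := c^r = (c,c,\ldots,c)$ (the rectangular partition with $r$ parts equal to $c$), and then read off what the four partitions appearing in Corollary~\ref{cor:schuridentityspecialcase} become. They collapse to rectangles: $(\lambda_1,\ldots,\lambda_{k-1}) = c^{r-1}$, $(\lambda_1-1,\ldots,\lambda_k-1) = (c-1)^r$, and $(\lambda_1+1,\ldots,\lambda_{k-1}+1) = (c+1)^{r-1}$. Substituting these three shapes into
\[ s_{\lambda} s^{[2,\infty)}_{(\lambda_1,\ldots,\lambda_{k-1})} = s_{\lambda}^{[2,\infty)}s_{(\lambda_1,\ldots,\lambda_{k-1})}+ x_1s_{(\lambda_1-1,\ldots,\lambda_k-1)} s_{(\lambda_1+1,\ldots,\lambda_{k-1}+1)}^{[2,\infty)} \]
immediately produces the identity $s_{c^r}s_{c^{r-1}}^{[2,\infty)} = s_{c^{r-1}}s_{c^r}^{[2,\infty)} + x_1s_{(c-1)^r}s_{(c+1)^{r-1}}^{[2,\infty)}$ claimed in the corollary.

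The only case that warrants a separate glance is the boundary $r=1$: there $c^{r-1} = (c+1)^{r-1} = \emptyset$ and one uses the convention $s_\emptyset = 1$, so the identity degenerates to $s_c = s_c^{[2,\infty)} + x_1 s_{c-1}$. Since $s_{(c)} = h_c$, this is the standard recursion for the complete homogeneous symmetric function obtained by splitting each monomial of $h_c(x_1,x_2,\ldots)$ according to whether or not $x_1$ appears; so it holds independently, and is also a valid specialization of Corollary~\ref{cor:schuridentityspecialcase} under the empty-partition convention. I anticipate no obstacles: all of the real content has already been carried out in the proof of Theorem~\ref{thm:schuridentity}, and the present corollary is a pure substitution.
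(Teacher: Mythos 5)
Your proof is correct and matches the paper's intent exactly: the paper presents Corollary~\ref{cor:rectengelschuridentity} as a further specialization of Corollary~\ref{cor:schuridentityspecialcase} (itself the $t=k-1$ case of Theorem~\ref{thm:schuridentity}), and your substitution $\lambda = c^r$, $k = r$ is precisely how that specialization goes. The $r=1$ boundary check is a nice touch but not required, since the empty-partition convention $s_\emptyset = 1$ handles it automatically.
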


Compare Corollary~\ref{cor:rectengelschuridentity} to the following result of Kirillov~\cite{kirillov1}:

\begin{thm}[Kirillov] \label{thm:kirillov}
For any $c, r \geq 1$, $(s_{c^r})^2 = s_{c^{r-1}} s_{c^{r+1}} + s_{(c-1)^r}s_{(c+1)^r}$.
\end{thm}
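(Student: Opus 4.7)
The plan is to derive Kirillov's identity from the Jacobi--Trudi formula by means of the Desnanot--Jacobi (a.k.a.\ Dodgson condensation) determinant identity. Recall that for any partition $\lambda$ with at most $\ell$ parts, $s_\lambda = \det(h_{\lambda_i - i + j})_{1 \leq i,j \leq \ell}$; for the rectangular shape $c^r$ this specializes to $s_{c^r} = \det(h_{c+j-i})_{1 \leq i,j \leq r}$.

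First, I would set up the $(r+1) \times (r+1)$ matrix $M := (h_{c+j-i})_{1 \leq i,j \leq r+1}$, whose full determinant is $s_{c^{r+1}}$. Then I would compute the five relevant minors and verify, via a direct index shift of the form $i' = i - 1$ or $j' = j - 1$, the following identifications: deleting row $1$ and column $1$ (or symmetrically row $r+1$ and column $r+1$) from $M$ yields $s_{c^r}$; deleting row $1$ and column $r+1$ yields $s_{(c-1)^r}$ (the ambient index $c$ effectively drops by $1$); deleting row $r+1$ and column $1$ yields $s_{(c+1)^r}$; and deleting both of the boundary rows and both of the boundary columns yields $s_{c^{r-1}}$.

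Next I would apply the Desnanot--Jacobi identity to $M$, which states that for any $(r+1)\times(r+1)$ matrix,
\[
\det(M)\cdot\det\bigl(M^{\hat{1},\widehat{r+1}}_{\hat{1},\widehat{r+1}}\bigr) \;=\; \det\bigl(M^{\hat{1}}_{\hat{1}}\bigr)\det\bigl(M^{\widehat{r+1}}_{\widehat{r+1}}\bigr) \;-\; \det\bigl(M^{\hat{1}}_{\widehat{r+1}}\bigr)\det\bigl(M^{\widehat{r+1}}_{\hat{1}}\bigr).
\]
Substituting the five identifications above yields $s_{c^{r+1}}\cdot s_{c^{r-1}} = (s_{c^r})^2 - s_{(c-1)^r}\cdot s_{(c+1)^r}$, which rearranges to Kirillov's identity $(s_{c^r})^2 = s_{c^{r-1}}s_{c^{r+1}} + s_{(c-1)^r} s_{(c+1)^r}$.

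The one step that requires genuine care is the bookkeeping in verifying the five minors: each deletion produces a Jacobi--Trudi matrix for a \emph{different} rectangular shape, and the parameter shifts in $c$ and in $r$ must all come out exactly right, with correct signs built into Desnanot--Jacobi. An alternative, more in keeping with the spirit of this paper, would be to realize Kirillov's identity as a three-term relation coming from an interlacing network via Corollary~\ref{cor:coroftswap2}, in analogy with the proof of Theorem~\ref{thm:schuridentity}: stack two copies of the Schur path configuration and arrange sources/sinks so that one balanced swap produces the pair $(s_{c^{r-1}},s_{c^{r+1}})$ and the other produces $(s_{(c-1)^r},s_{(c+1)^r})$. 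The hard part of that alternative would be designing the stacked network and its source/sink pattern so that the sink-swapping involution $\tau$ distributes the weights exactly as needed; the Jacobi--Trudi plus Desnanot--Jacobi route is shorter and self-contained, so it is the approach I would pursue first.
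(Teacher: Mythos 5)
Your proof is correct. The five minor identifications check out: after reindexing, deleting row $1$ and column $1$ (or row $r+1$ and column $r+1$) of $M=(h_{c+j-i})_{1\le i,j\le r+1}$ does give the Jacobi--Trudi matrix of $c^r$; deleting row $1$ and column $r+1$ shifts the parameter to $c-1$ giving $s_{(c-1)^r}$; deleting row $r+1$ and column $1$ shifts to $c+1$ giving $s_{(c+1)^r}$; and deleting both border rows and columns leaves the Jacobi--Trudi matrix of $c^{r-1}$. Desnanot--Jacobi (with the standard convention that an empty determinant equals $1$, which covers the boundary case $r=1$) then gives $s_{c^{r+1}}s_{c^{r-1}}=(s_{c^r})^2 - s_{(c-1)^r}s_{(c+1)^r}$, which rearranges to Kirillov's identity.

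Your route is, however, genuinely different from the one in the paper. The paper does not prove this result directly from determinants; it cites Kirillov, points to Fulmek and Kleber for a bijective proof, and observes that Kirillov's identity is the constant case $\nu_1=\dots=\nu_{k+1}=c$ of Fulmek--Kleber's identity (Theorem~\ref{thm:fulmekkleberidentity}), which in turn is the degenerate case $\lambda=(\nu_1,\dots,\nu_k,0)$, $\mu=(\nu_2,\dots,\nu_{k+1})$ of the paper's Theorem~\ref{thm:schurid2}. That theorem is proved by exhibiting a $k$-bottlenecked network and applying the source-swapping consequence of the involution $\tau$ from Remark~\ref{rem:sourceswap}. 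So the paper's chain is combinatorial and bijective, in the spirit of Lindstr\"om--Gessel--Viennot, and it is the mechanism the paper is advertising; your Jacobi--Trudi plus Dodgson-condensation argument is algebraic, shorter, and entirely self-contained, at the cost of yielding no bijection and not illustrating the interlacing-network machinery that is the point of the paper. Your closing remark that Kirillov's identity could alternatively be realized via Corollary~\ref{cor:coroftswap2} by designing a suitable interlacing network and source/sink pattern is exactly right in spirit, but the paper's own execution goes through the source-swapping version (Theorem~\ref{thm:schurid2}, proved by a $k$-bottlenecked but not interlacing network) rather than the sink-swapping three-term corollary.
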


Fulmek and Kleber~\cite{fulmek} give a bijective proof of this identity; indeed, they prove a more general identity, which we state below. Their proof also goes through a certain algorithm that swaps pairs of tuples of nonintersecting paths. In fact, their notion of \emph{changing tail} is quite similar to the path visiting the vertices $v_0, w_1, v_1, \ldots, v_c, w_c$ we build as part of the algorithm defining $\tau$ in~\S\ref{sec:sinkswap}. However, there are significant differences: for one, their networks are not interlacing (and so they never use bottlenecks); also, their procedure changes the size of each tuple, whereas ours does not. The result is that our identities oddly involve Schur functions in different sets of variables.

We now explain how these three-term Schur function identities, those due to Kirillov, Fulmek-Kleber, and our own, lead to some results about Schur positivity. Recall that we say that a symmetric function is \emph{Schur positive} if it has all nonnegative coefficients in the basis of Schur functions. For two symmetric functions $f$ and $g$, we write $f \geq_s g$ if the difference $f-g$ is Schur positive. There has been some interest in understanding when we have $s_{\nu}s_{\rho} \geq_s s_{\lambda}s_{\mu}$ for partitions $\nu, \rho, \lambda, \mu$. If we let $c_{\lambda,\mu}^{\alpha}$ be the Littlewood--Richardson coefficients given by $s_{\lambda}s_{\mu} = \sum_{\alpha} c_{\lambda,\mu}^{\alpha} s_{\alpha}$, this question is equivalent to the question of when we have~$c_{\nu,\rho}^{\alpha} \geq c_{\lambda,\mu}^{\alpha}$ for all $\alpha$. Research on this problem has focused on the case where the partitions $\nu$ and $\rho$ are thought of as ``functions'' of $\lambda$ and~$\mu$ as in~\cite{fomin}~\cite{lam}~\cite{bergeron}~\cite{stembridge} (see also related work~\cite{shimozono1}~\cite{shimozono2}~\cite{schilling} for $q$-analogs of this problem). We will now state a Schur positivity conjecture of this form. This conjecture was communicated to us privately by Alex Postnikov, who discovered it in collaboration with Pavlo Pylyavskyy and Thomas Lam (see also the papers~\cite{dobrovolska}~\cite{chari} which investigate this conjecture).

\begin{conj}[Lam-Postnikov-Pylyavskyy]\label{conj:schurpos}
Let $\nu = (\nu_1,\ldots,\nu_n)$ be a partition. For all $\sigma \in \mathfrak{S}_n$, $i \in [n]$, and choices $\pm \in \{+,-\}$, define a new sequence~$\nu^{\pm}(\sigma,i) = (\nu^{\pm}(\sigma,i)_1,\ldots,\nu^{\pm}(\sigma,i)_n)$ by
\[\nu^{\pm}(\sigma,i)_j := \begin{cases} \nu_j \pm 1 & \textrm{ if $\sigma^{-1}(j) \in [i]$} \\
\nu_j & \textrm{ otherwise}.\end{cases} \]

Let $\lambda = (\lambda_1,\ldots,\lambda_n)$ and $\mu = (\mu_1,\ldots,\mu_n)$ be two partitions and let their difference vector be~$\delta = (\delta_1,\ldots,\delta_n) := (\lambda_1 - \mu_1,\ldots,\lambda_n - \mu_n)$. Let $\sigma \in \mathfrak{S}_n$ be the unique permutation so that $\delta_{\sigma(1)} \geq \cdots \geq \delta_{\sigma(n)}$ and $\delta_{\sigma(i)} = \delta_{\sigma(j)}$ for~$i < j$ implies that~$\sigma(i) < \sigma(j)$. Set~$\mathcal{D} := \{i \in [n]\colon \delta_{\sigma(i)}>0 \textrm{ and } (i=n \textrm{ or } \delta_{\sigma(i)} > \delta_{\sigma(i+1)})\}$. Then for all $i \in \mathcal{D}$ we have $s_{\lambda^{-}(\sigma,i)}s_{\mu^{+}(\sigma,i)} \geq_s s_{\lambda}s_{\mu}$.
\end{conj}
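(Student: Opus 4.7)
The conjecture is known to be open in its full generality, so my plan is to derive the strongest subfamily of special cases accessible from Theorem~\ref{thm:schuridentity} and its corollaries. The first step is to extract a clean Schur-positivity statement from the three-term identity: dividing both sides by $x_1$, specializing to $x_1=0$, and reinterpreting $s^{[2,\infty)}_\tau$ as $s_\tau$ after the obvious relabeling of variables yields an identity in the ring of symmetric functions of the form
\[
\Bigl(\sum_{\kappa\colon |\lambda/\kappa|=1}\!\! s_\kappa\Bigr) s_\mu \;=\; s_\lambda \Bigl(\sum_{\pi\colon |\mu/\pi|=1}\!\! s_\pi\Bigr) \;+\; s_\rho\, s_\nu,
\]
which immediately gives the Schur positivity $\bigl(\sum_\kappa s_\kappa\bigr) s_\mu \;\geq_s\; s_\lambda \bigl(\sum_\pi s_\pi\bigr)$ with explicit difference $s_\rho\, s_\nu$.

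To convert this into inequalities of the exact shape $s_{\lambda^-(\sigma,i)}\, s_{\mu^+(\sigma,i)} \geq_s s_\lambda\, s_\mu$ demanded by the conjecture, I would restrict attention to pairs $(\lambda,\mu)$ each having a unique removable corner --- that is, rectangles $c^r$ or, more generally, partitions of the form $(c_1^{r_1}, c_2^{r_2})$ with $c_1 > c_2$. In this regime both sums above collapse to single terms and the identity becomes a single-partition inequality; Corollary~\ref{cor:rectengelschuridentity} exhibits the basic rectangular instance $s_{(c^{r-1}, c-1)}\, s_{c^{r-1}} \geq_s s_{c^r}\, s_{(c^{r-2}, c-1)}$. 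I would then perform a careful index comparison between the difference vectors $\delta = \lambda - \mu$ arising in these restricted cases and the combinatorial data $(\sigma, i, \mathcal{D})$ of the conjecture, and verify that each such single-box-move inequality realizes an instance of the conjecture with small $|\mathcal{D}|$. For pairs $(\lambda, \mu)$ with larger $|\mathcal{D}|$, the plan is to iterate Theorem~\ref{thm:schuridentity}, moving one block of boxes at a time in the order prescribed by $\sigma$, producing a telescoping chain of Schur-positive differences whose accumulation is the desired cumulative inequality.

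The principal obstacle is that a single application of Theorem~\ref{thm:schuridentity} rearranges a specific contiguous group of rows controlled by the index $t$, whereas the conjecture permits arbitrary $\sigma$ and allows $\delta$ to have repeated entries which bundle rows together. Aligning the combinatorics of $\mathcal{D}$ with the admissible choices of $t$, and then carrying Schur positivity through each iteration, looks genuinely hard --- especially when consecutive values $\delta_{\sigma(i)}$ and $\delta_{\sigma(i+1)}$ coincide and the telescoping would require a ``free swap'' between intermediate shapes that is not manifestly Schur positive. I expect handling such configurations to require the more general Schur function identities alluded to at the end of Section~1, obtained by applying the sink-swapping involution of~\S\ref{sec:sinkswap} to networks beyond the strictly interlacing ones; proving the conjecture in its fully general form almost certainly demands genuinely new input beyond the tools developed in this paper, and at best I would claim a proof of the special cases outlined above.
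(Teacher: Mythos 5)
This statement is a \emph{conjecture}, and the paper does not prove it; it is presented as an open problem communicated by Lam, Postnikov, and Pylyavskyy. You correctly recognize this and propose instead to establish special cases from Theorem~\ref{thm:schuridentity}, which is indeed all the paper does. Your route to the special cases matches the paper's almost exactly: the step of ``dividing by $x_1$ and setting $x_1=0$'' (more carefully, taking $\partial/\partial x_1$ at $x_1=0$, since the two terms $s_\lambda s^{[2,\infty)}_\mu$ and $s^{[2,\infty)}_\lambda s_\mu$ are not individually divisible by $x_1$) is precisely Corollary~\ref{cor:schurderivate}, where the resulting coefficient-of-$x_1$ sums you write are the skew Schur functions $s_{\lambda/1}$ and $s_{\mu/1}$. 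Your restriction to $\lambda$ with a unique removable corner and the worked rectangular instance is Proposition~\ref{prop:postnikovspecialcase}, whose proof in the paper applies Corollary~\ref{cor:schurderivate} with $\lambda=c^r$ together with the skew Pieri rule and then matches the data $(\delta,\sigma,i,\mathcal{D})$ in exactly the way you describe. Your closing paragraph, anticipating that further cases require the swapping involution on more general $k$-bottlenecked (not merely interlacing) networks, correctly anticipates Theorem~\ref{thm:schurid2} and Corollary~\ref{cor:schurpos2}, which furnish the paper's other verified special case. In short: your assessment is accurate, your planned reductions reproduce the paper's actual partial results, and your caution that iterating the three-term identity does not obviously telescope (especially across ties in $\delta$) is warranted --- the paper makes no claim that it does, and the conjecture remains open.
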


That $\lambda^{-}(\sigma,i)$ and $\mu^{+}(\sigma,i)$ remain partitions for all $i \in \mathcal{D}$ in Conjecture~\ref{conj:schurpos} just requires checking some cases. The three-term Schur function identities we have been studying in this section resolve some special cases of this conjecture. For instance, by applying $\frac{\partial}{\partial x_1}$ to both sides and setting ${x_1=0}$ in Theorem~\ref{thm:schuridentity}, we get the following identity of Schur functions that all use the same set of variables, albeit involving skew Schur functions.

\begin{cor}\label{cor:schurderivate}
For $\lambda = (\lambda_1,\ldots,\lambda_k)$ and $0 \leq t \leq k-1$,
\begin{align*}
s_{\lambda/ 1}s_{(\lambda_1,\ldots,\lambda_t,\lambda_{t+2}-1,\ldots,\lambda_k-1 )} = &s_{(\lambda_1,\ldots,\lambda_t,\lambda_{t+2}-1,\ldots,\lambda_k-1)/ 1}s_\lambda \\
&+ s_{(\lambda_1-1,\ldots,\lambda_k-1)}s_{(\lambda_1+1,\ldots,\lambda_t+1,\ldots,\lambda_k)}.
\end{align*}
Here $\nu/1$ denotes the skew shape of $\nu$ minus its top-leftmost box.
\end{cor}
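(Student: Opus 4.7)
The plan is to follow the hint literally: differentiate the identity of Theorem~\ref{thm:schuridentity} with respect to $x_1$ and then set $x_1 = 0$. The key observation that makes this work cleanly is that any symmetric function of the form $s^{[2,\infty)}_\alpha$ is, by definition, independent of $x_1$, so $\partial/\partial x_1$ annihilates it. Differentiating
\[
s_\lambda \cdot s^{[2,\infty)}_\mu \;=\; s^{[2,\infty)}_\lambda \cdot s_\mu \;+\; x_1\, s_\rho \cdot s^{[2,\infty)}_\nu
\]
(with $\mu,\nu,\rho$ as in the theorem) therefore yields
\[
\tfrac{\partial s_\lambda}{\partial x_1}\cdot s^{[2,\infty)}_\mu \;=\; s^{[2,\infty)}_\lambda\cdot\tfrac{\partial s_\mu}{\partial x_1} \;+\; s_\rho\cdot s^{[2,\infty)}_\nu \;+\; x_1\cdot\tfrac{\partial s_\rho}{\partial x_1}\cdot s^{[2,\infty)}_\nu,
\]
after which setting $x_1=0$ eliminates the last term and turns every $s_\alpha$ into $s^{[2,\infty)}_\alpha$.

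The one substantive step is identifying $\tfrac{\partial s_\lambda}{\partial x_1}\bigl|_{x_1=0}$ with a Schur function in $x_2,x_3,\ldots$. From the combinatorial definition $s_\lambda = \sum_T x^T$ over SSYTs $T$ of shape $\lambda$, differentiation gives $\tfrac{\partial s_\lambda}{\partial x_1} = \sum_T m(1,T)\, x^T/x_1$, and setting $x_1=0$ keeps only those tableaux with exactly one entry equal to $1$. Such a tableau must have its single $1$ in the box $(1,1)$ (since rows are weakly and columns strictly increasing), and deleting that box gives a bijection with SSYTs of shape $\lambda/1$ whose entries lie in $\{2,3,\ldots\}$. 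Hence $\tfrac{\partial s_\lambda}{\partial x_1}\bigl|_{x_1=0} = s^{[2,\infty)}_{\lambda/1}$, and likewise for $\mu$.

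Combining these observations produces
\[
s^{[2,\infty)}_{\lambda/1}\cdot s^{[2,\infty)}_\mu \;=\; s^{[2,\infty)}_\lambda\cdot s^{[2,\infty)}_{\mu/1} \;+\; s^{[2,\infty)}_\rho\cdot s^{[2,\infty)}_\nu,
\]
an equation of symmetric functions in the variables $x_2,x_3,\ldots$. Since Schur functions (including skew Schur functions) are stable under adjoining or removing a variable in this way — equivalently, an identity among symmetric functions holds in $\mathbb{Q}[x_1,x_2,\ldots]$ iff it holds in $\mathbb{Q}[x_2,x_3,\ldots]$ — we may relabel $x_{i+1}\mapsto x_i$ and obtain the desired identity in $x_1,x_2,\ldots$.

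I do not expect any real obstacle; the only care needed is the SSYT bookkeeping that identifies the $x_1$-derivative at zero with the skew Schur function $s_{\lambda/1}$, and the final relabeling of variables, both of which are routine.
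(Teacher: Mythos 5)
Your proof is correct and takes essentially the same approach as the paper, which likewise derives the corollary by applying $\partial/\partial x_1$ to Theorem~\ref{thm:schuridentity} and setting $x_1=0$. The paper leaves the details implicit; your SSYT bookkeeping identifying $\left.\partial s_\lambda/\partial x_1\right|_{x_1=0}$ with $s^{[2,\infty)}_{\lambda/1}$ and the final relabeling of variables are the right way to fill them in.
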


But we can in fact obtain the following Schur positivity result that concerns only regular Schur functions.

\begin{prop}\label{prop:postnikovspecialcase}
Let $c,r\geq1$ and $0 \leq t \leq r-1$. Then
\[s_{(c^{r-1},c-1)}s_{(c^t,(c-1)^{r-t-1} )}-s_{(c-1)^{r}}s_{((c+1)^t,c^{r-t-1})}\]
is Schur positive.
\end{prop}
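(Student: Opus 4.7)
The plan is to apply Theorem~\ref{thm:schuridentity} with $\lambda = c^r$ (so $k = r$) and then compare the coefficients of $x_1^1$ on the two sides of the resulting identity. Setting $\mu' := (c^t, (c-1)^{r-t-1})$ and $\mu'' := ((c+1)^t, c^{r-t-1})$ to match the proposition's notation, Theorem~\ref{thm:schuridentity} specializes to
\[s_{c^r}\, s^{[2,\infty)}_{\mu'} \;=\; s^{[2,\infty)}_{c^r}\, s_{\mu'} \;+\; x_1\, s_{(c-1)^r}\, s^{[2,\infty)}_{\mu''}.\]

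The main input I would use is that in any semistandard Young tableau the entries equal to $1$ form a (possibly empty) prefix of the first row. This yields the standard expansion
\[s_\nu(x_1,x_2,\ldots) \;=\; \sum_{j \geq 0} x_1^j\, s^{[2,\infty)}_{\nu/(j)}\]
for any partition $\nu$, where $\nu/(j)$ denotes the skew shape obtained by removing the first $j$ cells of the top row. In particular, the coefficient of $x_1^1$ in $s_\nu$ is the skew Schur function $s^{[2,\infty)}_{\nu/1}$.

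Extracting the coefficient of $x_1^1$ from both sides of the displayed identity (the three $s^{[2,\infty)}$-factors contribute no $x_1$), and using $c^r/1 = (c^{r-1}, c-1)$ since the rectangle has a unique inner corner at $(r,c)$, produces
\[s^{[2,\infty)}_{(c^{r-1},c-1)}\, s^{[2,\infty)}_{\mu'} - s^{[2,\infty)}_{(c-1)^r}\, s^{[2,\infty)}_{\mu''} \;=\; s^{[2,\infty)}_{c^r}\, s^{[2,\infty)}_{\mu'/1}.\]

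The right-hand side is the product of a Schur function and a skew Schur function evaluated in the variables $x_2, x_3, \ldots$, and hence is Schur positive in those variables. Relabeling $x_{i+1} \mapsto x_i$ preserves Schur positivity and converts the displayed identity into the equation of ordinary symmetric functions $s_{(c^{r-1},c-1)} s_{\mu'} - s_{(c-1)^r} s_{\mu''} = s_{c^r}\, s_{\mu'/1}$, proving the proposition. No serious obstacle is expected: the substantive work is already encoded in Theorem~\ref{thm:schuridentity}, and the proposition emerges as soon as one notices that the coefficient of $x_1$ in that three-term relation collects exactly the shapes appearing in the rectangular statement, together with an explicit Schur positive closed form.
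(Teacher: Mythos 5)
Your proposal is correct and essentially reproduces the paper's argument: both specialize Theorem~\ref{thm:schuridentity} to $\lambda = c^r$ and extract the coefficient of $x_1$ (the paper packages this step as Corollary~\ref{cor:schurderivate}), arriving at an identity whose right-hand side is $s_{c^r}$ times a skew Schur function and is therefore manifestly Schur positive. The only small divergence is that the paper goes on to expand that skew Schur function via Pieri's rule into a sum of straight Schur functions (producing a three-way case split in $t$), whereas you stop at the skew form --- a modest streamlining that avoids the case analysis while relying on exactly the same mechanism.
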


This proposition is a special case of Conjecture~\ref{conj:schurpos}. In order to see why, let~$\lambda=((c+1)^t,c^{r-t-1})$ and $ \mu=(c-1)^{r}$. Then $\delta=(2^t, 1^{r-t-1},-(c-1))$ and so $\sigma$ is the identity permutation. Note $r-1 \in \mathcal{D}$, so with~$i=r-1$ we get $\lambda^{-}(\sigma,r-1)= (c^t,(c-1)^{r-t-1})$ and $\mu^{+}(\sigma,r-1)=(c^{r-1},c-1)$. The conjecture says we should have $s_{\lambda^{-}(\sigma,r-1)}s_{\mu^{+}(\sigma,r-1)} \geq_s s_{\lambda}s_{\mu}$, which is exactly what Proposition~\ref{prop:postnikovspecialcase} asserts.

\begin{proof}[Proof of Proposition~\ref{prop:postnikovspecialcase}]: Applying Corollary~\ref{cor:schurderivate} to the case in which $\lambda$ is the rectangular partition $c^{r}$, and using the skew version of Pieri's rule \cite[Corollary 7.5.19]{stanley2} leads us to the following three cases:
\begin{enumerate}
\item If $1 \leq t \leq r-2$ then $s_{(c^{r-1},c-1)}s_{(c^t,(c-1)^{r-t-1})}$ is equal to
\[\Big [s_{(c^{t-1},(c-1)^{r-t})}+s_{(c^t,(c-1)^{r-t-2},c-2)} \Big] s_{c^{r}}+s_{(c-1)^{r}}s_{((c+1)^t,c^{r-t-1})}.\]
\item If $t=0$ then $s_{(c^{r-1},c-1)}s_{(c-1)^{r-1}} = s_{((c-1)^{r-2},c-2)}s_{c^{r}} + s_{(c-1)^{r}}s_{c^{r-1}}$.
\item If $t=r-1$ then $s_{(c^{r-1},c-1)}s_{c^{r-1}} = s_{(c^{r-2},c-1)}s_{c^{r}} + s_{(c-1)^{r}}s_{(c+1)^{r-1}}$.
\end{enumerate}
Thus, because products of Schur functions are Schur positive (in other words, because Littlewood--Richardson coefficients are nonnegative) we are done. \end{proof}

Another example of a special case of Conjecture~\ref{conj:schurpos} is obtained from the following identity of Fulmek and Kleber \cite{fulmek}, which we mentioned earlier implies Theorem~\ref{thm:kirillov}.

\begin{thm}[Fulmek and Kleber]\label{thm:fulmekkleberidentity}
Let $\nu = (\nu_1, \nu_2,\ldots, \nu_{k+1})$ be a partition with~$k \geq 1$. Then
\[s_{(\nu_1, \ldots, \nu_{k})}s_{(\nu_2,\ldots, \nu_{k+1})} = s_{(\nu_2,\ldots, \nu_{k})}s_{(\nu_1, \ldots \nu_{k+1})} + s_{(\nu_2-1,\ldots, \nu_{k+1}-1)} s_{(\nu_1+1,\ldots, \nu_{k}+1)}.\]
\end{thm}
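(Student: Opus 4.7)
The plan is to give a bijective proof in the same spirit as the proofs of Theorems~\ref{thm:schuridentity} and~\ref{thm:octa}: first translate each product of Schur functions into a generating function over pairs of noncrossing path tuples in $\mathbb{Z}^2$ via Proposition~\ref{prop:schurpaths}, and then construct a weight-preserving bijection between pairs enumerated by the left side and pairs enumerated by the disjoint union of the two right-side terms. Note that under this translation the LHS enumerates pairs $(A,B)$ with $A$ a $k$-tuple of shape $(\nu_1,\ldots,\nu_k)$ and $B$ a $k$-tuple of shape $(\nu_2,\ldots,\nu_{k+1})$, the first RHS term enumerates pairs $(A',B')$ of sizes $(k-1,k+1)$, and the second RHS term enumerates pairs $(A'',B'')$ of sizes $(k,k)$ with the shapes obtained from $\lambda$ and $\mu$ by shifting each row by $\pm 1$.

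Concretely, I would embed both tuples of $(A,B)$ in a common rectangle of $\mathbb{Z}^2$ (with the standard edge weights $\omega((i,j),(i-1,j))=x_j$ and $\omega((i,j),(i,j-1))=1$), choosing a horizontal offset for $B$ so that the $2k$ sources on the top row and the $2k$ sinks on the bottom row interleave into a zig-zag. For each pair $(A,B)$, I would then look at $\mathrm{Int}^{A+B}$ and, following the strategy of $\tau$ in Section~\ref{sec:sinkswap}, locate a canonical intersection vertex (the minimum antichain of $\mathrm{Int}^{A+B}$ in the sense of Proposition~\ref{prop:antichain}). If such an intersection exists, flip tails at the vertices traced out by the algorithm of $\tau$: this reassigns a sink from a path of $B$ to a path of $A$ (or vice versa), and the resulting pair has tuple sizes $(k-1,k+1)$ with precisely the shapes $(\nu_2,\ldots,\nu_k)$ and $(\nu_1,\ldots,\nu_{k+1})$, matching the first RHS term. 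If no intersection exists --- that is, if $A$ and $B$ sit in a nested geometric configuration --- then I would apply a ``boundary shift'' that moves each row of $B$ down by $1$ and each row of $A$ up by $1$, producing a pair $(A'',B'')$ for the second RHS term. Weight preservation in each case follows because tail-swapping preserves the multiset of traversed edges, and the boundary shift exactly tracks the change of start/end columns for the shifted shapes.

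Alternatively, one could attempt to fit the identity directly into the interlacing network framework by padding $\mu$ with a trivial row of length $0$ (so that $s_\mu = s_{\mu \cup \{0\}}$ is represented by a $(k+1)$-tuple including a degenerate path) and then applying Corollary~\ref{cor:coroftswap2} to a suitable interlacing network with parameter $K=k+1$ (thus $2k+1$ sources and sinks). The three sink patterns $J$, $J'$, $J''$ would give exactly the three products appearing in the identity. The main obstacle in either approach lies in handling the disjoint (``no intersection'') case cleanly: one must verify that disjoint pairs $(A,B)$ are in bijection, under a global shift, with the pairs enumerated by the second RHS term, and check that the involution property holds in this case as well. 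This is precisely the delicate piece of the argument handled in the original proof of Fulmek and Kleber~\cite{fulmek} via their ``changing tail'' construction --- which, as the paper observes, is closely analogous to the chain $v_0, w_1, v_1, \ldots, v_c, w_c$ built in the algorithm defining $\tau$ --- and adapting their analysis to the present setup would complete the proof.
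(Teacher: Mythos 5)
Your proposal misses the paper's actual route and, more importantly, contains a conceptual error about what the involution $\tau$ does, together with an unproven (and I believe false) claim about fitting the identity into the interlacing framework.

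The paper does not prove Theorem~\ref{thm:fulmekkleberidentity} by a direct tail-swap. It derives it as an instant corollary of Theorem~\ref{thm:schurid2}: take $\lambda = (\nu_1,\ldots,\nu_k,0)$ and $\mu = (\nu_2,\ldots,\nu_{k+1})$ (so Theorem~\ref{thm:schurid2} is applied with its parameter $k$ replaced by $k+1$). Of the $k+1$ summands $\sum_i s_{\lambda^i}s_{\mu^i}$, every term with $1<i<k+1$ vanishes since $\lambda^i_{i-1}=\mu_{i-1}-1=\nu_i-1<\nu_i=\lambda^i_i$, so $\lambda^i$ is not a partition. The surviving $i=1$ and $i=k+1$ terms are exactly the two products on the right-hand side. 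Theorem~\ref{thm:schurid2} is in turn proved with the \emph{source}-swapping involution $\sigma$ of Remark~\ref{rem:sourceswap} applied to a network that is $k$-bottlenecked but \emph{not} interlacing: all $2k-1$ sinks lie on the bottom row with repeated vertices, and no sink-branching $(k-1)$-sink-bottleneck exists (two paths from a bottom-row vertex $u$ to an inner sink and to $t_1$ share a whole segment of the bottom row). This defeats your second approach outright: Corollary~\ref{cor:coroftswap2} needs interlacing, and beyond that, the three products in the identity cannot all be realized by fixing one source set and permuting a \emph{sink} pattern $J,J',J''$ — the required end-columns coming from the shifted shapes are incompatible with a single common sink set.

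Your first approach has a more basic misunderstanding: the involution $\tau$ of Section~\ref{sec:sinkswap} \emph{always} sends a pair of tuples of sizes $(k-1,k)$ to another pair of sizes $(k-1,k)$. It never ``reassigns a sink from a path of $B$ to a path of $A$'' and produce tuple sizes $(k-1,k+1)$. That size-changing move is the Fulmek--Kleber ``changing tail'' procedure, which the paper merely notes is \emph{analogous to} the internal chain $v_0,w_1,v_1,\ldots$ built inside $\tau$, not the same map. Your proposal in effect re-sketches Fulmek and Kleber's proof, conflates their procedure with $\tau$, and then explicitly defers the hard ``no intersection'' case back to their paper; that is not a self-contained proof. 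The missing idea is to go through the multi-term source-swapping identity Theorem~\ref{thm:schurid2} (whose hypotheses only require $k$-bottlenecking) and then specialize so that all but two terms die.
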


Setting $\lambda=(\nu_1+1,\ldots, \nu_{k}+1)$ and~$\mu=(\nu_2-1,\ldots, \nu_{k+1}-1)$ in Conjecture~\ref{conj:schurpos}, we get that all the elements of $\delta$ are positive. Let $\sigma$ be as in the conjecture. Then we see~$\lambda^{-}(\sigma,k) = (\nu_1,\ldots, \nu_{k})$ and $\mu^{+}(\sigma,k) =(\nu_2,\ldots, \nu_{r+1})$. Since $k \in \mathcal{D}$, the conjecture says we should have~$s_{\lambda^{-}(\sigma,k) }s_{\mu^{+}(\sigma,k)} \geq_s s_{\lambda}s_{\mu}$, which indeed follows from Theorem~\ref{thm:fulmekkleberidentity}.

Because the involution $\tau$ of \S3 makes sense not just for interlacing networks, but also more generally for $k$-bottlenecked networks, it can actually be applied in a different way to obtain another result about Schur positivity. In fact, $\tau$ leads to the proof of a different special case of Conjecture~\ref{conj:schurpos}. First we prove another (multi-term) Schur function identity. The following identity appeared earlier in \cite{gurevich} (Proposition 3.1 and Corollary 3.2). It is also a consequence of Lemma 16 in \cite{fulmek}. Our proof is independent of the above and uses the properties of our involution $\tau$.

\begin{thm} \label{thm:schurid2}
Let $\lambda=(\lambda_1,...,\lambda_k)$ and $\mu=(\mu_1,...,\mu_{k-1})$ be partitions that interlace in the sense that $\lambda_i\geq\mu_i\geq\lambda_{i+1}$ for $i \in [k-1]$. For $1\leq i\leq k$, define~$\lambda^i=(\lambda^i_1,...,\lambda^i_k)$ and $\mu^i=(\mu^i_1,...,\mu^i_{k-1})$ to be
\[ \lambda^i_j :=
\begin{cases}
\mu_j -1 &\text{ if } j<i \\
\lambda_i &\text{ if } j=i \\
\mu_{j-1}  &\text{ if } j>i
\end{cases}
\ \text{ and }\
\mu^i_j :=
\begin{cases}
\lambda_j  +1	&\text{ if } j<i \\
\lambda_{j+1}  &\text{ if } j\geq i.
\end{cases}
\]
Then we have
$ s_\lambda s_\mu = \sum_{i=1}^k s_{\lambda^i} s_{\mu^i}$
where $s_\nu$ is taken to be $0$ if $\nu$ is not a partition.
\end{thm}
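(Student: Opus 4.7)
The plan is to apply Corollary~\ref{cor:coroftswap1} to a suitably designed $k$-bottlenecked network $G$ whose pattern weights encode the various products of Schur functions appearing in the identity. The approach parallels the proof of Theorem~\ref{thm:schuridentity} but uses the multi-term Corollary~\ref{cor:coroftswap1} in place of the three-term Corollary~\ref{cor:coroftswap2}, since the right-hand side of the theorem is a sum of $k$ terms rather than $2$.

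First I would construct the network $G$ as a suitable subgrid of $\mathbb{Z}^2$ with edge weights as in Section~\ref{sec:schur}, equipped with $2k-1$ sources and $2k-1$ sinks. As in the proof of Theorem~\ref{thm:schuridentity}, the sources will be placed at only $k$ distinct vertices (most of them doubled) so that the $k$-bottleneck $N$ can be taken to be the set of these source positions. The interlacing hypothesis $\lambda_i \geq \mu_i \geq \lambda_{i+1}$ is crucial for arranging the sources and sinks in the required clockwise order on the boundary of the disc: the sinks will be placed in a zig-zag (or similar) pattern whose exact coordinates depend on both $\lambda$ and $\mu$, so that both the $\lambda$- and $\mu$-data can be read off from the horizontal-step counts of the paths. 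Note that we do \emph{not} require $G$ to be interlacing; only the $k$-bottleneck property is needed for Corollary~\ref{cor:coroftswap1}.

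Next I would take $I := \{2,4,\ldots,2k-2\}$ as the source pattern and apply Corollary~\ref{cor:coroftswap1} with $K := I$. The LHS sum then reduces to the single pattern $J = I$, while the RHS sum ranges over the $k$ subsets $J_i := \{1,3,\ldots,2k-1\} \setminus \{2i-1\}$ for $i \in [k]$. The central step is to establish weight-preserving bijections
\[ \varphi_0 \colon \mathrm{PNCPath}_G(I,I) \longrightarrow \text{tuples counted by } s_\lambda\, s_\mu, \]
\[ \varphi_i \colon \mathrm{PNCPath}_G(I,J_i) \longrightarrow \text{tuples counted by } s_{\lambda^i}\, s_{\mu^i} \quad (i \in [k]). \]
These are built, as in the proof of Theorem~\ref{thm:schuridentity}, by extending the network paths vertically and occasionally horizontally so that each becomes the standard lattice-path representative of a tableau row in the sense of Proposition~\ref{prop:schurpaths}, with horizontal-step counts recovering the partition parts. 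Corollary~\ref{cor:coroftswap1} then gives $\mathrm{wt}(I,I) = \sum_{i=1}^{k} \mathrm{wt}(I,J_i)$, which via the bijections translates directly to the claimed identity. The convention $s_\nu = 0$ for $\nu$ not a partition accounts for those $i$ for which $\lambda^i$ or $\mu^i$ degenerates.

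The main obstacle is pinning down the exact network construction so that each $(I, J_i)$ produces the specific partition parts $\lambda^i, \mu^i$ given in the statement. The setup used in the proof of Theorem~\ref{thm:schuridentity} only accommodates the restricted family $\mu = (\lambda_1,\ldots,\lambda_t,\lambda_{t+2}-1,\ldots,\lambda_k-1)$, whereas here $(\lambda,\mu)$ ranges over all interlacing pairs, so the sink placement must be more flexible and depend genuinely on both $\lambda$ and $\mu$. Verifying that omitting the $(2i-1)$-st odd sink from $J_i$ forces precisely the correct local rerouting of paths---shifting the parts $\lambda_1+1,\ldots,\lambda_{i-1}+1$ into $\mu^i$ and the parts $\mu_1-1,\ldots,\mu_{i-1}-1$ into $\lambda^i$ while keeping $\lambda_i$ fixed---is the crux of the argument and will require careful case analysis using the interlacing hypothesis (which also guarantees that all the sources and sinks do indeed fit in the required clockwise order on the disc boundary, and that certain degenerate configurations correspond exactly to $\lambda^i$ or $\mu^i$ failing to be a partition).
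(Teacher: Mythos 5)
Your outline is correct and, up to passing to the opposite network, is essentially the paper's proof: the paper keeps the $\lambda$- and $\mu$-data in the \emph{sources} $S = (v_1,u_1,v_2,u_2,\dots,u_{k-1},v_k)$, doubles the \emph{sinks} as $T = ((1,1),(1,1),(2,1),(2,1),\dots,(k,1))$, takes the $k$-bottleneck $N$ at the $k$ distinct sink positions, and applies the source-swapping Remark~\ref{rem:sourceswap} (which is precisely Corollary~\ref{cor:coroftswap1} for $G^{\mathrm{op}}$). Your choices of $K = I = \{2,4,\dots,2k-2\}$, the resulting $k$ odd-only sink patterns, and the observation that only $k$-bottleneckedness (not full interlacing) is required are all correct. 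One correction and one reassurance: the data-carrying vertices do not lie in a zig-zag as in the proof of Theorem~\ref{thm:schuridentity}, but on a single row, interleaved as $v_i=(\lambda_{k+1-i}+i,n)$ and $u_i=(\mu_{k-i}+i,n)$ (the interlacing hypothesis $\lambda_i\geq\mu_i\geq\lambda_{i+1}$ is exactly what makes these sit in the correct order along the boundary); and the ``crux'' you defer is in fact immediate with this placement, since there is no rerouting at all---one gets a set equality $\mathrm{PNCPath}_G(I^i,J) = \mathrm{SPath}(\mu^i,n)\times\mathrm{SPath}(\lambda^i,n)$ directly, because the doubled side already has its vertices exactly where $\mathrm{SPath}$ wants the endpoints, and removing one $v_i$ from the blue tuple and inserting it into the red tuple produces precisely the $\mathrm{SPath}$ source sets for $\mu^i$ and $\lambda^i$.
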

\begin{proof}  In order to prove this identity we use a $k$-bottlenecked network~$G$. Fix~$n \geq k$. For $i \in [k]$ define~$v_i := (\lambda_{k+1-i}+i,n) \in \mathbb{Z}^2$ and for~$i \in [k-1]$ define~$u_i := (\mu_{k-i}+i,n)$. Define $G$ to be the network whose underlying graph is the subgraph of $\mathbb{Z}^2$ with vertices in the rectangle between~$(1,1)$ and~$(\lambda_1 + k,n)$ and with sources and sinks
\small
\begin{align*}
S &= (s_1,\ldots,s_{2k-1}) := ( v_1,u_1,v_2,u_2,\ldots,v_{k-1},u_{k-1},v_{k})\\
T &= (t_1,\ldots,t_{2k-1}) := ( (1,1),(1,1),(2,1),(2,1),\ldots,(k-1,1),(k-1,1),(k-1)).
\end{align*}
\normalsize
To witness that $G$ is $k$-bottlenecked we may take $N = \{t_1,t_3,\ldots,t_{2k-1}\}$.

Let $I,J := \{2,4,\ldots,2k-2\}$ and define $I^i := [2k-1] \setminus (\{2i-1\} \cup I)$ for all~$i \in [k]$. Then we have
\begin{align*}
\mathrm{PNCPath}_G(I,J) &= \mathrm{SPath}(\mu,n) \times \mathrm{SPath}(\lambda,n) \\
\mathrm{PNCPath}_G(I^i,J) &= \mathrm{SPath}(\mu^i,n) \times \mathrm{SPath}(\lambda^i,n)
\end{align*}
for all $i \in [k]$. Also, Remark~\ref{rem:sourceswap} tells us that $\mathrm{wt}(I,J) = \sum_{i=1}^{k} \mathrm{wt}(I^i,J)$. So we conclude $s^{[n]}_\lambda s^{[n]}_\mu = \sum_{i=1}^k s^{[n]}_{\lambda^i} s^{[n]}_{\mu^i}$. Taking~$n \to \infty$ gives us the result. \end{proof}

Note that Theorem~\ref{thm:schurid2} implies Theorem~\ref{thm:fulmekkleberidentity} by taking $\lambda = (\nu_1,\ldots,\nu_k,0)$ and $\mu = (\nu_2,\ldots,\nu_{k+1})$ (most terms are $0$). Similarly, taking~$\lambda = (\nu_1,\ldots,\nu_k)$ and $\mu = (\nu_1,\ldots,\nu_{t-1},\nu_{t+1},\ldots,\nu_k)$ in Theorem~\ref{thm:schurid2} for some $1 \leq t \leq k$ yields the following.

\begin{cor} \label{cor:schurpos2}
Let $\nu = (\nu_1,\ldots,\nu_k)$ be a partition and let $1 \leq t \leq k$. Then
\[ s_{\nu}s_{(\nu_1,\ldots,\nu_{t-1},\nu_{t+1},\ldots,\nu_k)} - s_{(\nu_1-1,\ldots,\nu_{t-1}-1,\nu_t,\ldots,\nu_k)}s_{(\nu_1+1,\ldots,\nu_{t-1}+1,\nu_{t+1},\ldots,\nu_{k})}\]
is Schur positive.
\end{cor}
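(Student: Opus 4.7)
The plan is to derive Corollary~\ref{cor:schurpos2} as an essentially immediate consequence of Theorem~\ref{thm:schurid2} applied to a carefully chosen interlacing pair of partitions. Following the hint in the sentence preceding the corollary, I would set $\lambda := (\nu_1,\ldots,\nu_k)$ and $\mu := (\nu_1,\ldots,\nu_{t-1},\nu_{t+1},\ldots,\nu_k)$.

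The first step is to check that $\lambda$ and $\mu$ interlace in the sense of Theorem~\ref{thm:schurid2}, i.e., $\lambda_i \geq \mu_i \geq \lambda_{i+1}$ for every $i \in [k-1]$. This decomposes into three cases depending on whether $i < t$, $i = t$, or $i > t$, and in each case the required inequalities reduce directly to the statement that $\nu$ itself is weakly decreasing.

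Next I would unwind the piecewise definitions of $\lambda^i$ and $\mu^i$ from Theorem~\ref{thm:schurid2} in the special case $i = t$. A careful bookkeeping of the offsets should yield
\begin{align*}
\lambda^t &= (\nu_1-1,\ldots,\nu_{t-1}-1,\nu_t,\nu_{t+1},\ldots,\nu_k), \\
\mu^t &= (\nu_1+1,\ldots,\nu_{t-1}+1,\nu_{t+1},\ldots,\nu_k),
\end{align*}
which are precisely the two partitions appearing in the subtrahend of the corollary. With this identification in hand, Theorem~\ref{thm:schurid2} rearranges to
\[ s_{\nu}\, s_{(\nu_1,\ldots,\nu_{t-1},\nu_{t+1},\ldots,\nu_k)} - s_{\lambda^t} s_{\mu^t} \;=\; \sum_{i \in [k]\setminus\{t\}} s_{\lambda^i} s_{\mu^i}, \]
where by the convention of Theorem~\ref{thm:schurid2} any summand whose indexing sequence fails to be a partition is simply dropped. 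Each surviving term on the right-hand side is a product of two ordinary Schur functions, and such products are Schur positive by the nonnegativity of the Littlewood--Richardson coefficients. The corollary follows.

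The only real obstacle I foresee is notational bookkeeping: the definitions of $\lambda^i$ and $\mu^i$ involve index shifts by one together with piecewise conditions on $j$, so one must be careful to verify that the $i = t$ summand really does match the claimed subtrahend and that no off-by-one error sneaks in at the boundary $j = t$. Once this matching is confirmed, the Schur-positivity conclusion is immediate, and there is no need to invoke the involution $\tau$ or the network machinery directly — all of that work has been absorbed into Theorem~\ref{thm:schurid2}.
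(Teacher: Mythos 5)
Your proposal is correct and follows precisely the paper's own route: the paper states immediately before the corollary that it is obtained by "taking $\lambda = (\nu_1,\ldots,\nu_k)$ and $\mu = (\nu_1,\ldots,\nu_{t-1},\nu_{t+1},\ldots,\nu_k)$ in Theorem~\ref{thm:schurid2}," isolating the $i=t$ term, and appealing to Littlewood--Richardson positivity for the remaining summands. Your bookkeeping of $\lambda^t$ and $\mu^t$ and the interlacing check are all accurate, so nothing is missing.
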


To see why Corollary~\ref{cor:schurpos2} is a special case of Conjecture~\ref{conj:schurpos}, we can take $\lambda = (\nu_1+1,\ldots,\nu_{t-1}+1,\nu_{t+1},\ldots,\nu_{k})$ and $\mu = (\nu_1-1,\ldots,\nu_{t-1}-1,\nu_t,\ldots,\nu_k)$. Let $\sigma$ be as in that conjecture. Note that $\delta_i = 2$ for $i \leq t-1$ and~$\delta_i < 0$ for~$i \geq t$, so $\sigma(i) = i$ for $i \leq t-1$ and $t-1 \in \mathcal{D}$. Then the conjecture predicts~$s_{\lambda^{-}(\sigma,t-1)}s_{\mu^{+}(\sigma,t-l)} \geq_s s_{\lambda}s_{\mu}$. But~$\lambda^{-}(\sigma,t-1) = (\nu_1,\ldots,\nu_{t-1},\nu_{t+1},\ldots,\nu_k)$ and $\mu^{+}(\sigma,t-1) = \nu$ so Corollary~\ref{cor:schurpos2} indeed verifies this Schur inequality.

\bibliography{interlacing}{}
\bibliographystyle{plain}

\end{document}